\newtheorem{theorema}{Theorem}
\newtheorem{theorem}{Theorem}[section]
\newtheorem{lemma}[theorem]{Lemma}
\newtheorem{proposition}[theorem]{Proposition}
\newtheorem{remark}[theorem]{Remark}
\newtheorem{definition}[theorem]{Definition}
\newtheorem{corollary}[theorem]{Corollary}
\newcommand{\sect}[1]{\section{#1} \setcounter{equation}{0} }
\newcommand{\norm}[2]{\left\|#1\right\|_{#2}}
\newcommand{\ds}{\displaystyle}
\newcommand{\Pn}{\Poly_n}
 \newcommand{\ec}{\end{comment}}
\newcommand{\bc}{ \begin{comment}
 }
 \newcommand{\bsmall}{\begin{scriptsize} \vspace{1cm} \mbox{}\\
  $\downarrow\rule{\textwidth}{0.1mm}  \downarrow$
 \setlength{\baselineskip}{0.3mm}
 }
 \newcommand{\esmall}{\noindent $\uparrow \rule{\textwidth}{0.1mm}\uparrow$ \end{scriptsize}\vspace{1cm} }
\newcommand{\E}{{\mathcal E}}
\newcommand{\DW}{{\mathcal DW}}
\newcommand{\AW}{A^*}
 \newcommand{\I}{{\mathcal I}}
\newcommand{\ccc}{{\mathfrak  \vartheta}}
\newcommand{\D}{\delta(\Z)}
\newcommand{\andd}{\quad\mbox{\rm and}\quad}
\newcommand\e{{\varepsilon}}
\newcommand\w{{\omega}}
\newcommand{\Z}{\mathcal Z}
\newcommand{\bZ}{{\mathbb Z}}
\newcommand{\bZM}{\bZ_M}
\newcommand{\pmin}{\mathrm{pmin}}
\newcommand{\astar}{$A^*$}
\def\be  {\begin{equation}}
\def\ee  {\end{equation}}
\def\ba  {\begin{eqnarray}}
\def\ea  {\end{eqnarray}}
\def\baa {\begin{eqnarray*}}
\def\eaa {\end{eqnarray*}}
\newenvironment{comment}[2]
{\bgroup\vspace{7pt}
\begin{tabular}{|p{5in}|}
\hline \qquad \bf \footnotesize Comment -- to be deleted in the final version \\
\hline
\quad\sl\footnotesize #1#2} {\\ \hline \end{tabular}
\vspace{7pt}\indent\egroup}
\def \meas{\mathop{\rm meas}\nolimits}
\def \dist{\mathop{\rm dist}\nolimits}
\def \esssup{\mathop{\rm ess\: sup}\nolimits}
  \newcommand{\K}{{\mathit{K}}}
\newcommand{\W}{{\mathcal{W}}}
\newcommand{\R}{\mathbb R}
\newcommand{\Range}{\mathcal R}
\newcommand{\N}{\mathbb N}
\newcommand{\ineq}[1]{(\ref{#1})}
\newcommand{\ie}{{\em i.e., }}
\newcommand{\eg}{{\em e.g. }}
\newcommand{\bpic}{
\begin{center}
}
\newcommand{\epic}{
\endpspicture
\end{center}
}
\newcommand{\st}{\;\; \big| \;\;}
\renewcommand{\L}{\mathbb{L}}
\newcommand{\Poly}{\Pi}
 \newcommand{\AC}{\mathrm{AC}}
  \newcommand{\loc}{\mathrm{loc}}
\newcommand{\Dom}{{\mathfrak{D}}}
 \newcommand\J {{\mathcal{J}}}
\newcommand{\wj}{w_{\J}}
\newcommand{\thm}[1]{Theorem~\ref{#1}}
\newcommand{\lem}[1]{Lemma~\ref{#1}}
\newcommand{\lemp}[1]{Lemma~\ref{properties}(\ref{#1})}
\newcommand{\cor}[1]{Corollary~\ref{#1}}
\newcommand{\rem}[1]{Remark~\ref{#1}}
\newcommand{\deff}[1]{Definition~\ref{#1}}
\title{Uniform polynomial  approximation with \astar{} weights having finitely many zeros}
\author{
Kirill A.  Kopotun\thanks{Department of Mathematics, University of
Manitoba, Winnipeg, Manitoba, R3T 2N2, Canada ({\tt
kopotunk@cc.umanitoba.ca}). Supported by Natural Sciences and Engineering Research Council of Canada under grant RGPIN-2015-04215.}    }
\begin{document}

\maketitle

\begin{abstract}
We prove matching direct and inverse theorems for uniform polynomial approximation   with $A^*$ weights (a subclass of doubling weights suitable for approximation in the $\L_\infty$ norm) having finitely many   zeros  and not too ``rapidly changing'' away from these zeros. This class of weights
 is rather wide and, in particular, includes the classical Jacobi weights, generalized Jacobi weights and generalized Ditzian-Totik weights. Main part and complete weighted moduli of smoothness are introduced, their properties are investigated, and
equivalence type results involving related realization functionals are  discussed.

\end{abstract}

%


\sect{Introduction}

Recall that  a nonnegative integrable  function $w$ is a  doubling weight (on $[-1,1]$)    if there exists a positive constant $L$ (a so-called doubling constant of $w$) such that
\be \label{doub}
 w (2I)  \leq L w (I) ,
\ee
for any  interval $I\subset [-1,1]$. Here,  $2I$ denotes the interval of length $2|I|$ ($|I|$ is the length of   $I$) with the same center as $I$, and
$ w(I) := \int_{I}  w(u) du$.
 Note that it is convenient to assume that $w$ is identically zero outside $[-1,1]$ which allows us to write $w(I)$ for any interval $I$ that is not necessarily contained in $[-1,1]$.
Let  $\DW_L  $ denote  the set of all doubling weights on $[-1,1]$ with the doubling constant $L$, and
$\DW  := \cup_{L>0}  \DW_L$,
\ie $\DW$ is the set of all doubling weights.

It is easy to see that $w \in \DW_L$   if and only if there exists a constant $\kappa\geq 1$ such that,  for any two adjacent intervals $I_1, I_2 \subset [-1,1]$ of equal length,
\be \label{withkap}
w(I_1) \leq \kappa w(I_2) .
\ee
Clearly, $\kappa$ and $L$ depend on each other.
In fact, if $w \in \DW_L$ then \ineq{withkap} holds with $\kappa = L^2$. Conversely, if \ineq{withkap} holds, then  $w \in \DW_{1+\kappa}$.

Following \cites{mt1999, mt2000}, we   say that   $w$ is an  \astar{} weight (on $[-1,1]$) if there is a constant $L^*$  (a so-called    \astar{} constant of $w$)
such that, for all  intervals $I\subset [-1,1]$ and $x\in I$, we have
\be \label{astar}
w(x) \leq {L^* \over |I|}   w(I)  .
\ee

Throughout this paper,  $\AW_{L^*}$ denotes  the set of all \astar{} weights on $[-1,1]$ with the \astar{} constant $L^*$. We also let
$\AW  := \cup_{L>0}  \AW_{L^*}$,
\ie $\AW$ is the set of all \astar{} weights.
Note that any \astar{} weight is doubling, \ie
$\AW_{L^*}  \subset \DW_{L}$,
where $L$ depends only on $L^*$.
This  was proved in \cite{mt2000} and is an immediate consequence of the fact (see \cite[Theorem 6.1]{mt2000}) that  if
 $w\in\AW_{L^*}$ then,  for some
  $l$ depending only on $L^*$ (for example, $l=2L^*$ will do),   $w(I_1) \geq (|I_1|/|I_2|)^l w(I_2)$, for all intervals $I_1, I_2 \subset [-1,1]$ such that  $I_1\subset I_2$.
Indeed, for any $I\subset [-1,1]$, this implies
$w(I) \geq \left(  |I|/|2I\cap [-1,1]| \right)^l w(2I) \geq  2^{-l}   w (2I)$, which shows that $w \in \DW_{2^l}$.

Moreover, it is known and is not difficult to check (see   \cite[pp. 58 and 68]{mt2000}) that all $A^*$ weights are $A_\infty$ weights. Here, $A_\infty$ is the union of all Muckenhoupt $A_p$ weights
and can be defined as the set of all weights $w$ such that, for any $0<\alpha<1$, there is $0<\beta<1$ so that $w(E)\geq \beta w(I)$, for all intervals $I\subset [-1,1]$ and all measurable subsets $E\subset I$ with $|E| \geq \alpha |I|$ (see \eg \cite{stein}*{Chapter V}).

Clearly, any \astar{} weight on $[-1,1]$ is bounded since if $w\in\AW_{L^*}$, then $w(x) \leq L^* w[-1,1]/2$, $x\in [-1,1]$. (We  slightly abuse the notation and write $w[a,b]$ instead of $w \left([a,b]\right)$ throughout this paper.) At the same time,
 not every bounded doubling weight is an \astar{} weight (for example, the doubling weight constructed in \cite{fm} is bounded and is not in $A_\infty$, and so it is not an \astar{} weight either).

Throughout this paper, we use the standard notation
$\norm{f}{I} := \norm{f}{\L_\infty(I)} :=   \esssup_{u\in I} |f(u)|$ and  $\norm{f}{} := \norm{f}{[-1,1]}$.
 Also,
\[
E_n(f, I)_{w} := \inf_{q  \in\Poly_n} \norm{w(f-q )}{I} ,
\]
where $\Poly_n$ is the space of algebraic polynomials of degree $\leq n-1$.

The following theorem is due to G. Mastroianni and V. Totik \cite{mt2001}*{Theorem 1.4} and is the main motivation for the present paper (see also \cites{mt1998, mt1999, mt2000}).

\begin{theorema}[\cite{mt2001}*{Theorem 1.4}] \label{mtthm}
Let $r\in\N$, $M\geq 3$, $-1 = z_1 < \dots < z_M = 1$, and let $w$ be a bounded generalized Jacobi weight
\be \label{genJacobi}
\wj(x) := \prod_{j=1}^M |x-z_j|^{\gamma_j}   \quad \mbox{\rm with }\;  \gamma_j \geq 0, \; 1\leq j\leq M.
\ee
Then there is a constant $c$ depending only on $r$ and the weight $w$ such that, for any $f$,
\[
E_n(f, [-1,1])_{\wj} \leq c \w_\varphi^r (f, 1/n)_{\wj}^* ,
\]
and
\[
\w_\varphi^r (f, 1/n)_{\wj}^* \leq c n^{-r} \sum_{k=1}^n k^{r-1} E_k(f, [-1,1])_{\wj} ,
\]
where
\[
\w_\varphi^r (f, t)_{\wj}^* := \sum_{j=1}^{M-1} \sup_{0<h\leq t} \norm{\wj(\cdot) \Delta_{h\varphi(\cdot)}^r(f,\cdot, J_{j,h})}{}
+ \sum_{j=1}^M E_r(f, I_{j, t})_{\wj}
\]
with $I_{1,h} = [-1, -1+h^2]$, $I_{M, h} = [1-h^2, 1]$, $J_{1, h} = [-1+h^2, z_2-h]$, $J_{M-1, h} = [z_{M-1}+h, 1-h^2]$, and
$I_{j, h} = [z_j-h, z_j+h]$ and $J_{j, h} = [z_j+h, z_{j+1}-h]$ for $1<j<M-1$, and the $r$th  symmetric difference is defined in \ineq{dd}.
\end{theorema}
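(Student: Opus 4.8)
The plan is to establish the two displayed inequalities separately, in each case using as the main engine the weighted polynomial inequalities for doubling weights (Remez-, Nikolskii--Bernstein--Markov-, and Schur-type) developed in \cites{mt1999,mt2000}, together with the partition of $[-1,1]$ at the points $z_j$: for $t=1/n$ one splits $[-1,1]$ into the short ``zero'' intervals $I_{j,t}$ around each $z_j$ and the complementary ``good'' intervals $J_{j,t}$, on each of which $\wj$ looks, locally, like a Jacobi-type weight whose only singularities sit at the relevant endpoints ($z_j$, $z_{j+1}$, or $\pm1$) and to which the classical Ditzian--Totik machinery applies. It is also natural to first record the equivalence of $\omega_\varphi^r(f,t)_{\wj}^*$ with a realization functional of the form $\inf_g\big\{\|\wj(f-g)\|+\sum_j t^r\|\wj\varphi^r g^{(r)}\|_{J_{j,ct}}+\sum_j E_r(g,I_{j,t})_{\wj}\big\}$, which streamlines both directions; the ``$\gtrsim$'' half is a Taylor-expansion/Hardy-inequality computation on each good interval, and the ``$\lesssim$'' half follows by mollifying $f$ separately on each good interval.

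\textbf{Direct estimate} $E_n(f)_{\wj}\le c\,\omega_\varphi^r(f,1/n)_{\wj}^*$. I would (i) on each $I_{j,1/n}$ take a best approximant $P_j\in\Poly_r$, so $\|\wj(f-P_j)\|_{I_{j,1/n}}=E_r(f,I_{j,1/n})_{\wj}$; (ii) on each good interval $J_{j,1/n}$ invoke a one-weight Jackson theorem to get a polynomial $Q_j$ of degree $\lesssim n$ with $\|\wj(f-Q_j)\|_{J_{j,1/n}}\lesssim\sup_{0<h\le1/n}\|\wj\,\Delta^r_{h\varphi}(f,\cdot,J_{j,h})\|$; and (iii) glue $P_1,Q_1,P_2,\dots$ into one polynomial of degree $\le n-1$ via a polynomial partition of unity subordinate to this decomposition --- smooth cutoffs supported on the transition intervals (length $\sim n^{-2}$ near $\pm1$, $\sim n^{-1}$ near interior $z_j$), each approximated by a polynomial of degree $\lesssim n$ with weighted error controlled using that $\wj$ is $A^*$: $w$ on a short interval is comparable to its average there, and $w(I)\ge(|I|/|I'|)^l w(I')$ for nested intervals by \cite[Theorem~6.1]{mt2000}. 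Summing the local errors and using $\sum_j w(I_{j,1/n})\le w[-1,1]$ together with the doubling property to control overlaps gives the bound; the harmless factor in $\deg\lesssim n$ is removed by rescaling $n\mapsto cn$.

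\textbf{Inverse estimate.} Take $p_n\in\Poly_n$ near-best and split $\omega_\varphi^r(f,1/n)_{\wj}^*\le\omega_\varphi^r(f-p_n,1/n)_{\wj}^*+\omega_\varphi^r(p_n,1/n)_{\wj}^*$. The first summand is $\le c\|\wj(f-p_n)\|\le c\,E_n(f)_{\wj}$, since every term of the modulus of a function is dominated by its weighted sup-norm over the relevant interval (on $J_{j,h}$ the point $x+(i-r/2)h\varphi(x)$ stays in a comparable-size subinterval of $[-1,1]$ on which the generalized Jacobi weight varies by a bounded factor). For the polynomial summand I would prove a weighted Bernstein bound $\omega_\varphi^r(p_n,1/n)_{\wj}^*\le c\,n^{-r}\|\wj\varphi^r p_n^{(r)}\|$: on $J_{j,h}$, $h\le1/n$, write $\Delta^r_{h\varphi}(p_n,x)$ as an $r$-fold integral of $p_n^{(r)}$ over a set of measure $\lesssim(h\varphi(x))^r$ on which $\wj$ is doubling-comparable, while each $E_r(p_n,I_{j,1/n})_{\wj}$ is handled by Taylor/Whitney on an interval of length $\sim1/n$ (with $\varphi\sim1$ at interior $z_j$, $\varphi^2\sim n^{-2}$ at $\pm1$). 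Then invoke the Mastroianni--Totik weighted Bernstein inequality $\|\wj\varphi^r p^{(r)}\|\le c(\deg p)^r\|\wj p\|$; apply it to the differences $p_{2^{k+1}}-p_{2^k}$ (for which $\|\wj(p_{2^{k+1}}-p_{2^k})\|\le c\,E_{2^k}(f)_{\wj}$), telescope $f=p_1+\sum_k(p_{2^{k+1}}-p_{2^k})$ with $p_1$ constant (so its modulus vanishes), and sum by parts using monotonicity of $k\mapsto E_k(f)_{\wj}$ to reach $c\,n^{-r}\sum_{k=1}^n k^{r-1}E_k(f)_{\wj}$.

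\textbf{Main obstacle.} The delicate step is the gluing in the direct theorem: manufacturing a \emph{single} polynomial of degree $\le n-1$ from the $M$ local approximants without losing control of $\|\wj(\cdot)\|$ in the transition zones, where $\wj$ may blow up or vanish. This is exactly where the full $A^*$ hypothesis (boundedness plus the lower bounds for $w$ on nested and adjacent intervals) is needed to make the cutoff-polynomial errors on the short transition intervals summable, and it is what forces the two different scales ($h$ versus $h^2$) in the definitions of $I_{j,h}$ and $J_{j,h}$. A subsidiary point, needed to run the dyadic argument above, is to first check that $\omega_\varphi^r(f,t)_{\wj}^*$ behaves like a genuine modulus --- nondecreasing in $t$ and subadditive up to a constant --- which follows from monotonicity of best approximation together with the identity $\Delta^r_{2u}f(x)=\sum_{i=0}^r\binom{r}{i}\Delta^r_u f(x+iu)$ applied on the good intervals.
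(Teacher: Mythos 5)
Your inverse-theorem half is essentially the paper's argument (dyadic telescoping of near-best approximants, the weighted Bernstein inequality $\norm{\wj\varphi^r P_n^{(r)}}{}\leq cn^r\norm{\wj P_n}{}$, and Taylor-type estimates on the intervals around the zeros; compare \thm{conversethm} with Lemmas~\ref{lem7.2} and~\ref{lem8.5j}), and I see no problem with it. The direct half, however, has a genuine gap at exactly the step you yourself flag as delicate: the gluing. You never construct the polynomial partition of unity, and this is not a routine omission. To multiply the local approximant $Q_j$ (which you control only on $J_{j,1/n}$) by a polynomial cutoff, you must first control $\wj Q_j$ on all of $[-1,1]$, which needs a weighted Remez/Nikolskii argument and is where the transition-zone losses actually occur; the degree of the product also doubles; and near an interior zero $z_j$ the cutoff must drop from $1$ to $0$ across an interval of length $\sim 1/n$ on which $\wj$ vanishes, so the claim that the $A^*$ property ``makes the cutoff-polynomial errors summable'' is an assertion, not a proof. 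As written, the direct estimate is a plan whose crux is missing.

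The paper avoids this entirely, and it is worth seeing how. First, Theorem~A is not proved from scratch: it is obtained as a corollary of the general direct and inverse theorems for weights in $\W^*(\Z)$ (\thm{jacksonthm} and \thm{conversethm}), via the two-sided comparison, displayed at the end of the Introduction, between $\w_\varphi^r(f,t)_{\wj}^*$ and the complete modulus $\w_\varphi^r(f,A,B,t)_{\wj}$ of \ineq{compmod}. Second, and more to the point, the proof of \thm{jacksonthm} replaces your gluing of polynomials by a gluing of the \emph{function}: one defines $F$ to equal a near-best polynomial $q_j\in\Poly_r$ of degree $<r$ on a small neighbourhood of each $z_j$ and to equal $f$ elsewhere, shows that $\norm{w(F-f)}{}$ is bounded by the edge terms $\sum_j E_r(f,\Z^j_{B,t})_w$ and that $\w_\varphi^r(F,\ccc/n)_{w_n}\leq c\,\w_\varphi^r(f,1,B,\ccc/n)_w$, and then applies a single global Jackson theorem for doubling weights (\thm{jacksonthmacta}, from \cite{k-acta}) to $F$. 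This yields one polynomial of degree $<n$ with no partition of unity and no degree bookkeeping; the only case analysis is in estimating the symmetric differences of $F$ when the difference interval straddles the boundary of one of the modified neighbourhoods. If you want to salvage your outline, replacing your step (iii) by this function-modification device is the efficient fix; your realization-functional preliminary then comes out as a consequence (it is \thm{corr99}) rather than being needed as an input.
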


The purpose of the present paper is to prove an analog of \thm{mtthm} for more general weights (namely, for   \astar{} weights having finitely many zeros inside $[-1,1]$ and not  too ``rapidly changing'' away from these zeros), and give a more natural and transparent (in our opinion) definition of the modulus of smoothness $\w_\varphi^r$. Our recent paper \cite{k-singular} deals with approximation in the weighted $\L_p$, $p<\infty$, (quasi)norm and a certain class of doubling weights having finitely many zeros and singularities. Approximation in the weighted $\L_\infty$ norm considered in the current paper is similar in some sense, but it also presents some challenges that have to be dealt with, and our present proofs are
different from those in both \cite{mt2001} and \cite{k-singular}.
The main results of the present paper are \thm{jacksonthm} (direct result), \thm{conversethm} (inverse result) and \thm{corr99} (equivalence of the modulus and an appropriate realization functional).
Finally, we mention that \thm{mtthm} is a corollary of our results taking into account that $\wj \in \W^*(\Z)$, $\Z\in\bZM$ (see \rem{rem3.3}), and
\begin{eqnarray*}
\lefteqn{ \w_\varphi^r\left(f, \max\left\{(1-z_2^2)^{-1/2},(1-z_{M-1}^2)^{-1/2}\right\}, 1/2,  t\right)_{\wj} }\\
&\leq& \w_\varphi^r (f, t)_{\wj}^*  \leq  M\cdot \w_\varphi^r\left(f, 1/2, \max\left\{(1-z_2^2)^{-1/2},(1-z_{M-1}^2)^{-1/2}\right\}, t\right)_{\wj}, \quad 0<t\leq1,
\end{eqnarray*}
where $\W^*(\Z)$ and  $\w_\varphi^r(f, A, B, t)_w$ are defined in \deff{def11} and \ineq{compmod}, respectively.

\section{Some properties of \astar{} weights}

Note that, for any interval $I\subset [-1,1]$ and $x\in I$,
if   \ineq{astar} holds for $I_1 := I \cap [-1,x]$ and $I_2 := I \cap [x, 1]$, then it also holds for $I$ since $|I_1|+|I_2| = |I|$ and $w(I_1) + w(I_2) = w(I)$.
Therefore, $w\in\AW_{L^*}$   if and only if, for all intervals $[a,b] \subset [-1,1]$,
\be \label{astar1}
\max\{ w(a), w(b)\}  \leq {L^*  \over b-a}   w[a,b] .
\ee


\begin{lemma} \label{lemma02}
Let $w\in\AW_{L^*}$, $\xi\in [-1,1]$, and let   $w_1(x) := f(|x-\xi|)$, where $f: [0,2] \mapsto \R_+$ is nondecreasing and such that $f(2x)\leq \K f(x)$, for some $\K>0$ and all $0\leq x \leq 1$.  Then, $\widetilde w := w w_1 \in\AW_L$ with the constant $L$ depending only on $\K$ and $L^*$.
\end{lemma}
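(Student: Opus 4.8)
The plan is to verify the characterization \ineq{astar1} directly for $\widetilde w = w w_1$. Fix an interval $[a,b]\subset[-1,1]$; by symmetry it suffices to bound $\widetilde w(a) = w(a) w_1(a)$ by a constant multiple of $(b-a)^{-1}\widetilde w[a,b]$. The difficulty is that $w_1$ need not be monotone on $[a,b]$ (it decreases toward $\xi$ and increases away from it), so one cannot simply pull $w_1(a)$ out of the integral. First I would split into the two natural cases according to the position of $\xi$ relative to $[a,b]$.

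If $\xi\notin (a,b)$, then $t\mapsto f(|t-\xi|)$ is monotone on $[a,b]$ (nondecreasing if $\xi\le a$, nonincreasing if $\xi\ge b$); in either case the value of $w_1$ at the endpoint nearest $\xi$ is the smallest, but the endpoint we care about, $a$, may be the far one. If $a$ is the near endpoint ($\xi\le a$) then $w_1(a)\le w_1(t)$ for all $t\in[a,b]$, so $\widetilde w(a)=w(a)w_1(a)\le (L^*/(b-a))\,w[a,b]\cdot\inf_{[a,b]}w_1 \le (L^*/(b-a))\,\widetilde w[a,b]$ using \ineq{astar1} for $w$. If $a$ is the far endpoint ($\xi\ge b$), I would compare $w_1(a)=f(|a-\xi|)$ with the value on the right half $[\,(a+b)/2,\,b\,]$: there $|t-\xi|\ge |b-\xi|\ge |a-\xi|/2$ (since $|a-\xi|\le 2|b-\xi|$ as $|a-b|\le|b-\xi|$ is false in general — more carefully, $|a-\xi| = |a-b|+|b-\xi| \le 2|b-\xi|$ requires $|a-b|\le|b-\xi|$, which fails for long intervals, so instead split $[a,b]$ dyadically from $b$ toward $a$ and use $f(2x)\le\K f(x)$ a bounded number of... ) — the cleanest route is: on the subinterval $[b-(b-a)/2,\,b]$ we have $|t-\xi|\ge|b-\xi|$ and also $|t-\xi|\ge (b-a)/2 \ge |a-\xi|/2$ when $\xi\ge b$ forces $|a-\xi|=|a-b|+|b-\xi|$... so pick the half of $[a,b]$ farther from $\xi$, call it $[c,b]$ (resp.\ with the other orientation), on which $|t-\xi|\ge |a-\xi|/2$, hence $w_1(t)\ge f(|a-\xi|/2)\ge \K^{-1}f(|a-\xi|)=\K^{-1}w_1(a)$ by the doubling property of $f$, and $w[c,b]\ge L^{-1}w[a,b]$ since $w$ is doubling (with doubling constant $L=L(L^*)$, as established in the Introduction). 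Then $\widetilde w[a,b]\ge \widetilde w[c,b]\ge \K^{-1}L^{-1}w_1(a)\,w[a,b]\ge \K^{-1}L^{-1}\cdot(b-a)L^{*-1}\,\widetilde w(a)\cdot\tfrac12$, giving \ineq{astar1} with $L$ depending only on $\K$ and $L^*$.

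If $\xi\in(a,b)$, apply \ineq{astar1} separately on $[a,\xi]$ and $[\xi,b]$ and add, exactly as in the remark preceding \lem{lemma02}: on $[a,\xi]$, $w_1$ is nonincreasing so the previous "far endpoint" argument applies to bound $\widetilde w(a)$, and similarly on $[\xi,b]$ for $\widetilde w(b)$; since $|[a,\xi]|+|[\xi,b]|=b-a$ and $\widetilde w[a,\xi]+\widetilde w[\xi,b]=\widetilde w[a,b]$, the bound for the whole interval follows. The main obstacle is precisely the "far endpoint" estimate — reconciling the fact that $w_1(a)$ may be much larger than the typical value of $w_1$ on $[a,b]$ when $[a,b]$ is long and $\xi$ sits just outside it; this is resolved by combining the doubling property $f(2x)\le\K f(x)$ of $f$ (to control $w_1(a)$ by $w_1$ on a fixed fraction of the interval) with the doubling property of $w$ (to control $w[a,b]$ by $w$ on that same fraction). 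Everything else is bookkeeping with the constants, all of which end up depending only on $\K$ and $L^*$.
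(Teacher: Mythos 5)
Your Case 1 ($\xi\notin(a,b)$) is essentially the paper's argument and is fine modulo bookkeeping: for the endpoint $a$ farthest from $\xi$ you compare $w_1(a)=f(|a-\xi|)$ with $w_1$ on the half of $[a,b]$ farther from $\xi$, where $|t-\xi|\ge|a-\xi|/2$ gives $w_1(t)\ge \K^{-1}w_1(a)$, and then control the mass of $w$ on that half (note the half in question contains $a$, not $b$, so your label $[c,b]$ is a slip); whether you apply \ineq{astar1} for $w$ directly on that half, as the paper does, or on all of $[a,b]$ followed by the doubling of $w$, as you do, the constants depend only on $\K$ and $L^*$.

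The genuine gap is in Case 2 ($\xi\in(a,b)$). The remark preceding \lem{lemma02} splits an interval at the very point whose value is being estimated: to conclude $\widetilde w(a)\le L\,\widetilde w[a,b]/(b-a)$ by addition you need \emph{both} $\widetilde w(a)\le L\,\widetilde w[a,\xi]/(\xi-a)$ \emph{and} $\widetilde w(a)\le L\,\widetilde w[\xi,b]/(b-\xi)$, i.e.\ two estimates for the same point $a$. You only establish the first one (and the analogous estimate for $b$ on $[\xi,b]$); adding them yields $\widetilde w(a)(\xi-a)+\widetilde w(b)(b-\xi)\le C\,\widetilde w[a,b]$, which says nothing about $\widetilde w(a)(b-a)$ when $\xi-a\ll b-a$. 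So when $\xi$ sits just inside $[a,b]$ near $a$, your bound for $\widetilde w(a)$ is only by the average of $\widetilde w$ over the tiny interval $[a,\xi]$, and the required bound by the average over all of $[a,b]$ does not follow from what you wrote. This is exactly where the paper does extra work: if $|a-\xi|\ge (b-a)/4$ the estimate over $[a,\xi]$ transfers because $(\xi-a)^{-1}\le 4(b-a)^{-1}$ and $\widetilde w[a,\xi]\le\widetilde w[a,b]$; if $|a-\xi|<(b-a)/4$ the paper takes the reflected point $a'$ with $\xi=(a+a')/2$ and the set $I'':=[a,b]\setminus[a,a')$, notes $|I''|\ge (b-a)/2$ and $w_1(a)=w_1(a')\le w_1(u)$ for $u\in I''$, and uses the doubling of $w$ to replace $w[a,b]$ by $w(I'')$, no doubling of $f$ being needed there. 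Your proposal needs an argument of this kind in the subcase $\xi-a<(b-a)/4$ (for instance: $w_1(a)=f(\xi-a)\le w_1(u)$ for $u$ in the half of $[\xi,b]$ far from $\xi$, whose length is comparable to $b-a$, combined with \ineq{astar1} for $w$ on $[a,b]$ and the doubling of $w$); as written, the ``add as in the remark'' step is invalid.
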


\begin{proof}
Suppose that   $I\subset [-1,1]$  and   $d$ is one of the endpoints of $I$. We need to show that $\widetilde w(d) \leq L \widetilde w(I)/|I|$.

{\bf Case 1: $\xi\not\in int(I)$.}\\
 Then, $w_1$ is monotone on $I$, and so either $w_1(d) \leq w_1(u)$ or $w_1(d) \geq w_1(u)$, for $u\in I$.
In the former case, we immediately have
\[
\widetilde w (d) = w(d) w_1(d) \leq  {L^*  \over |I|}  \int_I   w_1(d) w(u) du \leq {L^*  \over |I|} \widetilde w(I) .
\]
Suppose now that $w_1(d) \geq w_1(u)$, for $u\in I$. This means that $d$ is the endpoint of $I$ furthest from $\xi$.
Let $\zeta$ be the midpoint of $I$, and let $J := [d,\zeta]$ (as usual, if $x<y$, then $[y,x] := [x,y]$). Then,  $w_1(\zeta) \leq w_1(u)$, for all $u\in J$.
Also, since $|d-\xi|/2 \leq |\zeta-\xi|$ and $|d-\xi| \leq 2$,
we conclude that
\[
w_1(d) = f(|d-\xi|) \leq \K f(|d-\xi|/2) \leq  \K f(|\zeta-\xi|)= \K w_1(\zeta) .
\]
Therefore,
$w_1(d) \leq \K w_1(u)$, for all $u\in J$, and so
\begin{eqnarray} \label{auxw}
\widetilde w (d) &=&  w(d) w_1(d) \leq  {L^*  \over |J|}  \int_J   w_1(d) w(u) du \leq {L^* \K  \over |J|}  \int_J   w_1(u) w(u) du
 \leq  {L^*\K  \over |J|} \widetilde w(I) \\ \nonumber
 &=&  {2L^*\K  \over |I|} \widetilde w(I) .
\end{eqnarray}

{\bf Case 2: $\xi \in int(I)$.} \\
If $|d-\xi|\geq |I|/4$, then using  \ineq{auxw}  for  $I':= [d,\xi]$, we have
\[
\widetilde w (d) \leq {2L^*\K  \over |I'|} \widetilde w(I') \leq {8L^*\K  \over |I|}\widetilde w(I) .
\]
We now assume that $|d-\xi| < |I|/4$. Let $d'$ be the point symmetric to $d$ about $\xi$, \ie $\xi = (d+d')/2$, and let $I'':= I\setminus [d,d')$. Then
$|I''| = |I| - 2|d-\xi| \geq |I|/2$, and $w_1(d)=w_1(d') \leq w_1(u)$, for all $u\in I''$. Hence, taking into account that $w$ is doubling with the doubling constant depending only on $L^*$, we have
\[
\widetilde w (d) = w(d) w_1(d) \leq  {L^* w_1(d)  \over |I|}  \int_I    w(u) du \leq  {c w_1(d) \over |I|} \int_{I''}    w(u) du \leq
{c  \over |I|} \int_{I''}  w_1(u)  w(u) du \leq {c  \over |I|} \widetilde w (I) .
\]
This completes the proof.
\end{proof}

\begin{corollary} \label{corast}
Suppose that $w\in\AW_{L^*}$, $M\in\N$ and, for each $1\leq i\leq M$, $z_i \in [-1,1]$, $\gamma_i \geq 0$ and $\Gamma_i \in\R$ (if $\gamma_i >0$) or  $\Gamma_i \leq 0$ (if $\gamma_i =0$). Then
\be \label{genDT}
\widetilde w(x) := w(x) \prod_{i=1}^M |x-z_i|^{\gamma_i} \left( \ln{e \over |x-z_i|} \right)^{\Gamma_i}
\ee
 is an \astar{} weight with the \astar{} constant depending only on   $\gamma_i$'s,  $\Gamma_i$'s and $L^*$.
\end{corollary}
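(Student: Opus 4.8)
The plan is to apply \lem{lemma02} once for each of the $M$ factors, peeling off $|x-z_i|^{\gamma_i}\bigl(\ln(e/|x-z_i|)\bigr)^{\Gamma_i}$ one at a time. The obstruction is that \lem{lemma02} requires the new factor to be a \emph{nondecreasing} function of $|x-z_i|$ (satisfying a doubling bound), whereas $f_i(t):=t^{\gamma_i}\bigl(\ln(e/t)\bigr)^{\Gamma_i}$ need not be nondecreasing on $[0,2]$. To handle this I will also use the elementary remark that a weight comparable to an \astar{} weight is itself an \astar{} weight: if $v\le V\le cv$ on $[-1,1]$ and $V\in\AW_{L}$, then for every interval $I\subset[-1,1]$ and every $x\in I$ one has $v(x)\le V(x)\le\frac{L}{|I|}V(I)\le\frac{cL}{|I|}v(I)$, so $v\in\AW_{cL}$.

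Fix $1\le i\le M$ and regard $f_i$ as a function on $[0,2]$ (with $f_i(0):=0$ unless $\gamma_i=\Gamma_i=0$). Since $0<t\le2<e$ we have $\ln(e/t)\ge1-\ln2>0$, and $\frac{d}{dt}\ln f_i(t)=\frac1t\bigl(\gamma_i-\Gamma_i/\ln(e/t)\bigr)$. If $\Gamma_i\le0$ (which is automatic when $\gamma_i=0$) this derivative is nonnegative, so $f_i$ is nondecreasing on $[0,2]$ and I set $\bar f_i:=f_i$. If $\gamma_i>0$ and $\Gamma_i>0$, then $f_i$ increases on $[0,t_i]$ and decreases on $[t_i,2]$ where $t_i:=e^{1-\Gamma_i/\gamma_i}$, and I set $\bar f_i(t):=f_i\bigl(\min\{t,t_i\}\bigr)$ (so $\bar f_i=f_i$ when $t_i\ge2$). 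In every case $\bar f_i$ is nondecreasing on $[0,2]$ with $\bar f_i\ge f_i$; moreover, using $\ln(e/t_i)=\Gamma_i/\gamma_i$ and $1-\ln2\le\ln(e/t)\le\Gamma_i/\gamma_i$ for $t\in[t_i,2]$, one checks $\bar f_i\le c_i f_i$, and using $\ln(e/2t)=\ln(e/t)-\ln2$ together with $1-\ln2\le\ln(e/2t)/\ln(e/t)\le1$ for $t\in(0,1]$, one checks $\bar f_i(2t)\le\K_i\bar f_i(t)$ for all $t\in[0,1]$. The constants $c_i$ and $\K_i$ here depend only on $\gamma_i$ and $\Gamma_i$.

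Now set $\widetilde w_0:=w$ and, for $1\le k\le M$, $\widetilde w_k(x):=\widetilde w_{k-1}(x)\,\bar f_k(|x-z_k|)$. Applying \lem{lemma02} with $\xi=z_k$, the function $\bar f_k$, and $\K=\K_k$, we obtain by induction that each $\widetilde w_k\in\AW$, with an \astar{} constant depending only on $L^*$ and on $\gamma_1,\Gamma_1,\dots,\gamma_k,\Gamma_k$; in particular $\widetilde w_M=w\prod_{i=1}^M\bar f_i(|\cdot-z_i|)$ is an \astar{} weight with constant of the required form. Finally, since $\widetilde w\le\widetilde w_M\le\bigl(\prod_{i=1}^M c_i\bigr)\widetilde w$ on $[-1,1]$, the remark from the first paragraph shows that $\widetilde w\in\AW$ with constant still depending only on $L^*$ and the $\gamma_i$'s and $\Gamma_i$'s. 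The only step that requires genuine care is the middle one: the logarithmic factor with $\Gamma_i>0$ destroys the monotonicity of $f_i$ near $t=2$, so \lem{lemma02} cannot be applied to $f_i$ directly, and one must check that passing to the nondecreasing majorant $\bar f_i$ changes the weight by at most a factor depending only on $\gamma_i$ and $\Gamma_i$.
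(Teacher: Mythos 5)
Your proposal is correct and follows essentially the same route as the paper: replace each factor by a nondecreasing function on $[0,2]$ satisfying the doubling condition of \lem{lemma02} and comparable (with constants depending only on $\gamma_i,\Gamma_i$) to the original factor, apply \lem{lemma02} iteratively, and then use that a weight pointwise comparable to an \astar{} weight is itself \astar{}. The only difference is cosmetic: you flatten $f_i$ at its maximum point $t_i=e^{1-\Gamma_i/\gamma_i}$, whereas the paper makes $f_{\gamma,\Gamma}$ monotone by replacing the constant inside the logarithm with $\Psi=1+\max\{0,\Gamma\}/\gamma$ — both yield the same comparability-plus-\lem{lemma02} argument.
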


We remark that, with $w\sim 1$, the weights $\widetilde w$ in \ineq{genDT} are sometimes called ``generalized Ditzian-Totik weights''.

\begin{proof}  Denote
\[
f_{\gamma, \Gamma} (x) :=
\begin{cases}
\left( 1 - \ln x  \right)^{\Gamma} , & \mbox{\rm if  $\gamma  = 0$ and $\Gamma \leq 0$,} \\
x^\gamma  \left( \Psi - \ln  x  \right)^{\Gamma}  , & \mbox{\rm if  $\gamma  > 0$ and $\Gamma \in\R$,} \\
\end{cases}
\]
where $\Psi := 1 + \max\{0, \Gamma\}/\gamma$.
It is easy to check that $f_{\gamma, \Gamma}$ is nonnegative and nondecreasing on $[0,2]$, and
satisfies $\sup_{x\in [0,1]} |f_{\gamma, \Gamma}(2x)/f_{\gamma, \Gamma}(x)| < \infty$.
Hence, \lem{lemma02} implies that the weight
\[
\widehat w(x) := w(x) \prod_{i=1}^M  f_{\gamma_i, \Gamma_i} (|x-z_i|)
\]
 is an \astar{} weight with the \astar{} constant depending only on $\gamma_i$'s, $\Gamma_i$'s,  and $L^*$.

Finally, it remains to notice that, if $\gamma >0$ and $\Gamma \in\R$, then
$f_{\gamma, \Gamma}(x) \sim x^\gamma  \left( 1 - \ln  x  \right)^{\Gamma}$ on $[0,2]$ with equivalence constants depending only on $\gamma$ and $\Gamma$, and so $\widetilde w \sim \widehat w$ on $[-1,1]$. Clearly, this implies that $\widetilde w\in\AW$.
\end{proof}

\begin{remark} \label{remark25}
It follows from \cor{corast} that, for any \astar{} weight $w$ and any $\mu\geq 0$,
$w\varphi^\mu$ is also an \astar{} weight, where $\varphi(x) := \sqrt{1-x^2}$.
\end{remark}

For $n\in\N$,  following \eg \cite{mt2001}, we denote
\[
w_n(x) := \rho_n(x)^{-1} \int_{x-\rho_n(x)}^{x+\rho_n(x)} w(u) du ,
\]
 where $\rho_n(x) := n^{-1}\varphi(x) + n^{-2}$ (recall that $w$ is assumed to be $0$ outside $[-1,1]$). Note that, for any $w\in\AW_{L^*}$ and $x\in [-1,1]$,
\begin{eqnarray} \label{wlesswn}
w (x) &\leq&  {L^* \over \left| [x-\rho_n(x), x+\rho_n(x)] \cap [-1,1] \right|} \int_{[x-\rho_n(x), x+\rho_n(x)] \cap [-1,1]} w(u) du \\ \nonumber
& \leq &
{L^* \over  \rho_n(x) } \int_{x-\rho_n(x)}^{x+\rho_n(x)} w(u) du =  L^*  w_n(x) .
\end{eqnarray}

\begin{lemma}\label{lemma26}
Let $w\in\AW_{L^*}$  and   $n\in\N$. Then $w_n\in\AW_{L}$ with $L$ depending only on $L^*$.
\end{lemma}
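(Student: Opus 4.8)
The plan is to use the characterization \ineq{astar1}: it will suffice to show that for every interval $[a,b]\subset[-1,1]$ and every endpoint $d\in\{a,b\}$ one has $w_n(d)\le L\,(b-a)^{-1}\int_a^b w_n(u)\,du$ with $L$ depending only on $L^*$; since the reflection $x\mapsto-x$ fixes $\varphi$ and $\rho_n$ and preserves the \astar{} constant, I may assume $d=a$. Write $I_x:=[x-\rho_n(x),x+\rho_n(x)]$, so that $w_n(x)=\rho_n(x)^{-1}w(I_x\cap[-1,1])$ (recall $w\equiv0$ off $[-1,1]$). I will rely on two facts. First, $\rho_n$ is slowly varying: from $|\varphi(x)^2-\varphi(y)^2|=|x^2-y^2|\le2|x-y|$ together with $\rho_n\ge n^{-2}$ one gets, for $|x-y|\le A\rho_n(x)$, that $\rho_n(y)\sim\rho_n(x)$ and that $I_y\cap[-1,1]$ and $I_x\cap[-1,1]$ are comparable subintervals of $[-1,1]$, with constants depending only on $A$; one also checks $\rho_n(x)\le|I_x\cap[-1,1]|\le2\rho_n(x)$. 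Second, $w$ is doubling with constant depending only on $L^*$ (as recalled in the Introduction), and — the elementary but decisive consequence of \ineq{astar} — $|J|^{-1}w(J)\le L^*|I|^{-1}w(I)$ whenever $J\subseteq I\subseteq[-1,1]$ (because $w\le L^*|I|^{-1}w(I)$ a.e.\ on $I$). Combining these two facts shows that $w_n(u)\sim w_n(x)$ whenever $|u-x|\le A\rho_n(x)$, with constants depending only on $A$ and $L^*$.

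With $d=a$ fixed I would split into two cases. If $b-a\le K\rho_n(a)$ for a suitable large absolute constant $K$, then $w_n(u)\sim w_n(a)$ for all $u\in[a,b]$, whence $\int_a^b w_n\gtrsim(b-a)w_n(a)$, and we are done. The substantive case is $b-a>K\rho_n(a)$. Here I would first fix $K$ (absolute) large enough that $\max_{u\in[a,b]}\rho_n(u)\le\tfrac12(b-a)$: this needs a short case analysis according to the position of $0$ relative to $[a,b]$, using that $\varphi$ is concave with maximum at $0$ and that $\varphi^2$ is $2$-Lipschitz, so that on $[a,b]$ the maximum of $\rho_n$ is attained at an endpoint or (when $0\in[a,b]$) is of size $O(n^{-1})$, in either case $O(b-a)$ once $b-a>K\rho_n(a)$. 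Two estimates then finish the proof. (i) $\int_a^b w_n\gtrsim w[a,b]$: by Tonelli $\int_a^b w_n=\int w(v)g(v)\,dv$ with $g(v):=\int_a^b\rho_n(u)^{-1}\mathbf 1_{\{|u-v|<\rho_n(u)\}}\,du$, and for $v\in[a,b]$ slow variation gives $g(v)\gtrsim1$, because the admissible set of $u$ contains an interval of length $\sim\rho_n(v)$ about $v$ — which fits inside $[a,b]$ precisely because $\rho_n(v)\le\tfrac12(b-a)$ — and there $\rho_n(u)^{-1}\sim\rho_n(v)^{-1}$. (ii) $w_n(a)(b-a)\lesssim w[a,b]$: since $b-a>K\rho_n(a)$ we have $a+\rho_n(a)<b$, so $I_a\cap[-1,1]=[a',a+\rho_n(a)]$ with $a':=\max\{a-\rho_n(a),-1\}\le a$, and $w(I_a\cap[-1,1])=w[a',a]+w[a,a+\rho_n(a)]$; the second summand is $\le L^*\rho_n(a)(b-a)^{-1}w[a,b]$ by the consequence of \ineq{astar} applied to $[a,a+\rho_n(a)]\subseteq[a,b]$, and the first is $\le\kappa$ times the second since $[a',a]$ is adjacent to $[a,a+\rho_n(a)]$ and no longer than it (\ineq{withkap} plus monotonicity of $w(\cdot)$). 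Multiplying through by $\rho_n(a)^{-1}(b-a)$ gives (ii), and then $w_n(a)(b-a)\lesssim w[a,b]\lesssim\int_a^b w_n$, with all constants depending only on $L^*$.

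The hard part will be the case $b-a>K\rho_n(a)$. The conceptual point is that a local-average weight such as $w_n$ need not be comparable to $w$ pointwise — think of a generalized Jacobi weight near one of its zeros — so one cannot ``transfer the $A^*$ property of $w$ at a point''; instead the factor $b-a$ in the required lower bound $\int_a^b w_n\gtrsim(b-a)w_n(a)$ must be manufactured from the global mass $w[a,b]$, as in step (i), while the purely local quantity $\rho_n(a)w_n(a)=w(I_a\cap[-1,1])$ is bounded by $\rho_n(a)(b-a)^{-1}w[a,b]$ in step (ii) using exactly the pointwise estimate $w(x)\le L^*|I|^{-1}w(I)$ that characterizes $A^*$. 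Getting the calibration of $K$ right so that $\max_{[a,b]}\rho_n\le\tfrac12(b-a)$ is the one genuinely finicky computation.
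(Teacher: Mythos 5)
Your argument is correct and follows essentially the same route as the paper's: reduce to an endpoint estimate via \ineq{astar1}, split according to whether $b-a$ is comparable to $\rho_n(a)$ (where slow variation of $w_n$ finishes), and otherwise bound $\rho_n(a)^{-1}w[a-\rho_n(a),a+\rho_n(a)]$ by $(b-a)^{-1}w[a,b]$ using doubling and the pointwise \astar{} inequality. The only real difference is your step (i): the Tonelli computation of $\int_a^b w_n\gtrsim w[a,b]$ (and hence the calibration of $K$ so that $\max_{[a,b]}\rho_n\le\tfrac12(b-a)$) is an unnecessary detour, since the pointwise bound $w\le L^* w_n$ recorded in \ineq{wlesswn} gives $w[a,b]\le L^*\int_a^b w_n$ immediately.
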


\begin{proof} Suppose that   $n\in\N$ is fixed.
Let $I$ be a subinterval of $[-1,1]$,  and suppose that $x\in I$   is the left endpoint  of $I$ (the case for the right endpoint is analogous).
If $[x,x+\rho_n(x)] \subset I$, using the fact that $w$ is doubling, we have
\begin{eqnarray*}
w_n(x) &=& \rho_n(x)^{-1} \int_{x-\rho_n(x)}^{x+\rho_n(x)} w(u) du
 \leq
c \rho_n(x)^{-1} \int_{x}^{x+\rho_n(x)} w(u) du \\
&\leq &
c \rho_n(x)^{-1} \int_{x}^{x+\rho_n(x)}
{L^* \over \left| I \right|} \int_{I} w(v) dv \, du
 \leq
{c \over \left| I \right|} \int_{I} w(v) dv
  \leq   {c \over \left| I \right|}   \int_I w_n(v) dv .
\end{eqnarray*}
Recall now that, if $|x-u| \leq \K \rho_n(x)$, then $w_n(x) \sim w_n(u)$ (see \eg \cite[(2.3)]{mt2001}).
This implies that,
if $x$ is the left endpoint   of $I$   and $x+\rho_n(x) \not\in I$, then $I \subset [x, x+\rho_n(x)]$, and so $w_n(u) \sim w_n(x)$, for all $u\in I$. Hence, in this case,
\[
w_n(x) \sim {1 \over \left| I \right|}   \int_I w_n(u) du .
\]
Therefore,  \ineq{astar1} implies that
 $w_n$ is  an \astar{} weight.
\end{proof}


\sect{Special \astar{} weights    and associated moduli of smoothness}

Let
\[
\rho(h,x) := h\varphi(x)+h^2
\]
(note that $\rho(1/n, x)=\rho_n(x)$), and
\[
 \bZM := \left\{
(z_j)_{j=1}^M \st   -1 \leq    z_1 < \dots < z_{M-1} < z_M \leq  1 \right\}, \; M\in\N .
\]


For $\Z\in\bZM$, it is convenient to denote
 \[
 \Z_{A,h}^j  := \Z_{A,h}^j(\Z)  :=
 \left\{ x \in [-1,1] \st |x-z_j| \leq A \rho(h, z_j)  \right\},  \quad   1\leq j\leq M   ,
 \]
 \[
 \Z_{A,h}  :=  \Z_{A,h}(\Z)  :=
 \cup_{j=1}^M \Z_{A,h}^j,
 \]
 and
 \[
 \I_{A, h} :=   \I_{A, h}(\Z) :=
 \left([-1,1] \setminus  \Z_{A,h}\right)^{cl}   = \left\{ x\in [-1,1] \st |x-z_j| \geq  A \rho(h,z_j), \; \text{for all } 1\leq j \leq M \right\}.
 \]
Also,
 \[
  \D   := \pmin  \left\{  |z_j - z_{j-1}| \st 1\leq j\leq M+1 \right\} ,
 \]
 where $z_0 := -1$, $z_{M+1} := 1$
 and
 $\pmin(S)$ is the smallest {\em positive} number from the finite set $S$ of nonnegative reals.
 Note that $\D \leq 2$, for any $\Z\in\bZM$.

The following definition is an analog of \cite{k-singular}*{Definition 2.1} for \astar{} weights.

  \begin{definition} \label{def11}
  Let $\Z\in\bZM$.
 We say that   $w$ is an \astar{} weight from   the class
  $\W^*(\Z)$ (and write $w\in \W^*(\Z)$)   if
  \begin{itemize}
  \item[(i)]
   $w\in\AW$,
  \end{itemize}
    and
  \begin{itemize}
   \item[(ii)]
 for any $\e>0$ and  $x, y\in [-1,1]$ such that $|x-y| \leq  \rho(\e, x)$ and
 $\dist\left( [x,y], z_j \right) \geq \rho(\e, z_j)$ for all $1\leq j \leq M$, the following inequalities are satisfied
 \be \label{nochange}
c_* w(y) \leq  w(x) \leq c_*^{-1} w(y) , 
 \ee
 where the   constant $c_*$ depends only on $w$, and does not depend on $x$, $y$ and $\e$.
 \end{itemize}
 \end{definition}


Clearly, there are non-\astar{} weights  satisfying condition (ii) in Definition~\ref{def11}. For instance, the non-doubling  weight
\[
w(x) := \begin{cases}
-x , & \mbox{\rm if }\; x<0 ,\\
x^2 , & \mbox{\rm if }\; x\geq 0 ,
\end{cases}
\]
is one such example for $\Z := \{0\}$.

\begin{remark}
A weight from the class $\W^*(\Z)$  may  have zeros only at the points in $\Z$. At the same time, it is not required to have zeros at those points.
\end{remark}

\begin{remark} \label{rem3.3}
It follows from \cite{k-singular}*{Example 2.7} and \cor{corast}  that the following weights belong to $\W^*(\Z)$ with $\Z= (z_j)_{j=1}^M$, $-1\leq z_1 < \dots <z_{M-1} < z_M \leq 1$:
\begin{itemize}
\item
bounded classical Jacobi weights: $w(x) = (1+x)^\alpha (1-x)^\beta$, $\alpha,\beta \geq 0$,   with  $M=2$, $z_1 = -1$ and  $z_2= 1$,
\item
bounded generalized Jacobi weights \ineq{genJacobi},
\item
bounded generalized Ditzian-Totik weights \ineq{genDT} with $w\equiv 1$.
\end{itemize}
\end{remark}

The following lemma immediately follows from \cite{k-singular}*{Lemma 2.3} taking into account the fact that any \astar{} weight is doubling.

\begin{lemma} \label{properties}
Let $w$ be an \astar{} weight and $\Z\in\bZM$. The following conditions   are equivalent.
\begin{enumerate}[\rm (i)]
\item \label{i} $w\in \W^*(\Z)$.
\item \label{ii} For any $n\in\N$ and $x, y$ such that $[x,y] \subset \I_{1, 1/n}$ and $|x-y| \leq  \rho_n(x)$, inequalities \ineq{nochange} are satisfied with the constant $c_*$ depending only on $w$.

\item \label{iii}
For some $N\in\N$ that depends only on $w$, and any  $n\geq N$ and $x, y$ such that $[x,y] \subset \I_{1, 1/n}$ and $|x-y| \leq  \rho_n(x)$, inequalities \ineq{nochange} are satisfied with the constant $c_*$ depending only on  $w$.

\item  \label{iv} For any $n\in\N$, $A, B >0$, and $x, y$ such that $[x,y] \subset \I_{A, 1/n}$ and $|x-y| \leq B \rho_n(x)$, inequalities \ineq{nochange} are satisfied with the constant $c_*$ depending only on
$w$, $A$ and $B$.
\item \label{v}
For  any $n\in\N$ and $A>0$,
 \[
  w(x) \sim  w_n(x) , \quad x \in \I_{A, 1/n} ,
\]
where the equivalence constants depend only on $w$ and $A$, and are independent of $x$ and $n$.

\end{enumerate}
\end{lemma}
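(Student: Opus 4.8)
The plan is to reduce everything to \cite{k-singular}*{Lemma 2.3} by observing that the only difference between the present setting and that of \cite{k-singular} is that there $w$ is assumed to be a doubling weight, while here $w$ is assumed to be an \astar{} weight; since every \astar{} weight is doubling (as recalled in the Introduction, $\AW_{L^*}\subset\DW_L$ with $L$ depending only on $L^*$), the cited lemma applies verbatim. So the core of the argument is simply: state that $w\in\DW$, invoke \cite{k-singular}*{Lemma 2.3}, and transcribe the equivalence of conditions. The five conditions (i)--(v) are the \astar{}-weight analogues of the conditions in the source lemma, so the implications hold with the same constants (now additionally allowed to depend on the \astar{} constant $L^*$, but this is already absorbed into ``depend only on $w$'').

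The one point that needs a word of care is condition (i): in \deff{def11}, membership in $\W^*(\Z)$ already builds in $w\in\AW$ as part (i), whereas in the purely doubling framework of \cite{k-singular} the analogous class is defined with $w\in\DW$ in place of $w\in\AW$. Since the lemma here is stated under the standing hypothesis ``$w$ is an \astar{} weight,'' the clause $w\in\AW$ is automatically satisfied, and hence condition (i) of the present lemma is equivalent to the condition ``$w$ belongs to the doubling-weight class of \cite{k-singular}*{Definition 2.1} with the same $\Z$,'' which is exactly the hypothesis under which the cited equivalences are proved. Thus no new work beyond this identification is required.

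First I would write: ``Since $w$ is an \astar{} weight, $w$ is a doubling weight (see the discussion following \ineq{astar}), and condition (i) is precisely the statement that $w$ belongs to the class introduced in \cite{k-singular}*{Definition 2.1} (for the same $\Z$). The equivalences (\ref{i})$\Leftrightarrow$(\ref{ii})$\Leftrightarrow\cdots\Leftrightarrow$(\ref{v}) then follow immediately from \cite{k-singular}*{Lemma 2.3}, noting that all constants in \cite{k-singular}*{Lemma 2.3} that are said there to depend on the doubling constant of $w$ are here allowed to depend on $w$ (equivalently, on $L^*$).'' That is essentially the entire proof.

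I do not expect any real obstacle here; the statement is deliberately a bookkeeping lemma that imports the machinery of the author's earlier $\L_p$ paper into the $\L_\infty$ setting, and the only substantive fact being used — that $\AW\subset\DW$ — has already been established in the Introduction. The sole thing to double-check is that the numbering of conditions (i)--(v) here matches the numbering in \cite{k-singular}*{Lemma 2.3} closely enough that the citation is unambiguous; if it does not, a one-line remark reconciling the two lists would be added, but this is cosmetic rather than mathematical.
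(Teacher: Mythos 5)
Your proposal matches the paper's own treatment exactly: the paper introduces this lemma with the single remark that it ``immediately follows from \cite{k-singular}*{Lemma 2.3} taking into account the fact that any \astar{} weight is doubling,'' which is precisely your reduction. No further detail is given in the paper, so your write-up is, if anything, slightly more careful about identifying condition (i) with the corresponding doubling-weight class.
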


For $r\in\N$, $t>0$ and $\Z\in\bZM$, {\em the main part weighted modulus of smoothness} is defined as
 \be \label{mpmod}
 \Omega_\varphi^r(f, A, t)_{ w}  :=    \Omega_\varphi^r(f, A, t; \Z)_{ w}  :=
  \sup_{0<h\leq t} \norm{w(\cdot) \Delta_{h\varphi(\cdot)}^r(f,\cdot, \I_{A, h})}{}  ,
\ee
where
\be \label{dd}
\Delta_h^r(f,x, J):=\left\{
\begin{array}{ll} \ds
\sum_{i=0}^r  {r \choose i}
(-1)^{r-i} f(x-rh/2+ih),&\mbox{\rm if }\, [x-rh/2, x+rh/2]  \subset J \,,\\
0,&\mbox{\rm otherwise},
\end{array}\right.
\ee
is  the $r$th symmetric difference.

Note that if we denote
\be \label{domain}
 \Dom (A,h, r) :=   \left\{ x \st [x-rh\varphi(x)/2,x+rh\varphi(x)/2] \subset \I_{A, h} \right\}
\ee
then
\[
 \Omega_\varphi^r(f, A, t)_{ w}  =  \sup_{0<h\leq t} \norm{w(\cdot) \Delta_{h\varphi(\cdot)}^r(f,\cdot,\R)}{\Dom(A, h,r)} .
\]

The {\em weighted Ditzian-Totik   modulus of smoothness} is
\[
\w_\varphi^r(f, t)_{ w} :=  \sup_{0<h\leq t}  \norm{w(\cdot) \Delta_{h\varphi(\cdot)}^r(f,\cdot, [-1,1])}{} .
\]

For $A, B, t>0$, we define the  {\em complete weighted modulus of smoothness }   as
 \be \label{compmod}
 \w_\varphi^r(f, A, B, t)_{ w}  :=   \w_\varphi^r(f, A, B, t; \Z)_{ w}  :=
 \Omega_\varphi^r(f, A, t;\Z)_{ w}  + \sum_{j=1}^M  E_r(f, \Z_{B,t}^j)_{w} .
\ee

We will also need the following auxiliary quantity (``restricted main part modulus''):
 \be \label{restmod}
   \Omega_\varphi^r(f,  t)_{S, w}  :=   \sup_{0<h\leq t} \norm{w(\cdot) \Delta_{h\varphi(\cdot)}^r(f,\cdot, S)}{}   ,
\ee
where $S$ is some subset (a union of intervals) of $[-1,1]$ that does not depend on $h$.

\section{Properties of main part and complete weighted moduli}

\begin{proposition}  \label{propprop}
For any weight function $w$ and a set $\Z\in\bZM$, the moduli defined in \ineq{mpmod}, \ineq{compmod} and \ineq{restmod} have the following properties:
\begin{enumerate}[\rm (i)]

\item \label{pi}
$\ds \Omega_\varphi^r(f, A, t)_{w} = \Omega_\varphi^r(f, A, \sqrt{2/A})_{w}$  for any $t\geq \sqrt{2/A}$;

\item \label{pii}
$\ds \w_\varphi^r(f, A, B, t)_{ w} = \w_\varphi^r(f, A, B, t_0)_{ w} \geq M E_r(f, [-1,1])_{w}$ for any $t\geq t_0 := \max\{\sqrt{2/A}, \sqrt{2/B} \}$;

\item \label{piii}
$\ds  \Omega_\varphi^r(f, A, t_1)_{ w} \leq  \Omega_\varphi^r(f, A, t_2)_{ w}$ and    $\ds \w_\varphi^r(f, A, B, t_1)_{ w} \leq \w_\varphi^r(f, A, B, t_2)_{ w}$   if  $0<t_1 \leq t_2$;

\item \label{piv}
$\ds \Omega_\varphi^r(f, A_1, t)_{ w} \geq  \Omega_\varphi^r(f, A_2, t)_{ w}$
and $\ds \w_\varphi^r(f, A_1, B, t)_{ w} \geq \w_\varphi^r(f, A_2, B, t)_{ w}$
if $A_1 \leq A_2$;

\item \label{pv}
$\ds  \w_\varphi^r(f, A, B_1, t)_{ w} \leq \w_\varphi^r(f, A, B_2, t)_{ w}$
if $B_1 \leq B_2$;

\item \label{pvi}
$\ds \Omega_\varphi^r(f,  c_* t)_{\I_{A, t}, w} \leq  \Omega_\varphi^r(f, A/\max\{c_*, c_*^2\}, c_*t)_{ w}$ for any $t>0$ and $c_*>0$.

\end{enumerate}
\end{proposition}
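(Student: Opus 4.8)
The proof is a sequence of elementary observations about the symmetric difference operator and about how the sets $\I_{A,h}$ and $\Z^j_{B,h}$ depend on the parameters. Most items follow by comparing the sets over which the supremum and the $E_r$-infima are taken, together with the basic monotonicity of $\sup$ and $\inf$; no analytic estimates are needed. I would organize the work item by item.

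\textbf{Items (iii), (iv), (v).} These are the "monotonicity" statements and are the easiest. For (iii): if $0<t_1\le t_2$, then $\{h:0<h\le t_1\}\subset\{h:0<h\le t_2\}$, so the supremum in \ineq{mpmod} over the smaller $h$-range is no larger; adding the ($h$-independent) term $\sum_j E_r(f,\Z^j_{B,t})_w$ and noting $\Z^j_{B,t_1}\subset\Z^j_{B,t_2}$ (since $\rho(h,z_j)$ is increasing in $h$), hence $E_r(f,\Z^j_{B,t_1})_w\le E_r(f,\Z^j_{B,t_2})_w$, gives the claim for $\w_\varphi^r$. For (iv): if $A_1\le A_2$ then $\Z_{A_1,h}\supset\Z_{A_2,h}$, so $\I_{A_1,h}\subset\I_{A_2,h}$; since shrinking the set $J$ in $\Delta^r_h(f,\cdot,J)$ can only turn nonzero differences into $0$ (the condition $[x-rh/2,x+rh/2]\subset J$ becomes harder to satisfy), we get $\norm{w\,\Delta^r_{h\varphi}(f,\cdot,\I_{A_1,h})}{}\le\norm{w\,\Delta^r_{h\varphi}(f,\cdot,\I_{A_2,h})}{}$, hence $\Omega^r_\varphi(f,A_1,t)_w\le\Omega^r_\varphi(f,A_2,t)_w$; wait — I need the inequality in the stated direction, so let me re-examine: the statement is $\Omega^r_\varphi(f,A_1,t)_w\ge\Omega^r_\varphi(f,A_2,t)_w$. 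Indeed $A_1\le A_2$ gives $\I_{A_1,h}\supset\I_{A_2,h}$, and enlarging $J$ can only create new nonzero differences, so the norm over $\I_{A_1,h}$ dominates — that is the claim. Adding back $\sum_j E_r(f,\Z^j_{B,t})_w$ (unchanged, since $B$ is fixed) gives the $\w^r_\varphi$ version. Item (v) is identical: $B_1\le B_2$ gives $\Z^j_{B_1,t}\subset\Z^j_{B_2,t}$, hence $E_r(f,\Z^j_{B_1,t})_w\le E_r(f,\Z^j_{B_2,t})_w$, while the main part is untouched.

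\textbf{Items (i) and (ii): stabilization for large $t$.} The point is that $\I_{A,h}$ becomes empty once $h$ is large enough. Recall $\rho(h,z_j)=h\varphi(z_j)+h^2\ge h^2$, so if $h^2 A\ge 2$, i.e. $h\ge\sqrt{2/A}$, then every $x\in[-1,1]$ satisfies $|x-z_j|\le 2\le A\rho(h,z_j)$ for at least... hmm, more carefully: I want $\I_{A,h}=\emptyset$, i.e. every $x\in[-1,1]$ lies in some $\Z^j_{A,h}$. Since $\diam[-1,1]=2$, if $A\rho(h,z_j)\ge 2$ for just one $j$ then $\Z^j_{A,h}=[-1,1]$ and we are done; and $A\rho(h,z_j)\ge Ah^2\ge 2$ exactly when $h\ge\sqrt{2/A}$. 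Hence for $h\ge\sqrt{2/A}$ the difference $\Delta^r_{h\varphi}(f,\cdot,\I_{A,h})\equiv 0$, so in \ineq{mpmod} only the values $0<h\le\sqrt{2/A}$ contribute (intersecting with $0<h\le t$), giving $\Omega^r_\varphi(f,A,t)_w=\Omega^r_\varphi(f,A,\min\{t,\sqrt{2/A}\})_w$, which is \ineq{pi} for $t\ge\sqrt{2/A}$. For (ii), combine this with the corresponding stabilization $\Z^j_{B,t}=[-1,1]$ once $t\ge\sqrt{2/B}$ (same computation with $B$ in place of $A$): for $t\ge t_0=\max\{\sqrt{2/A},\sqrt{2/B}\}$ the main part equals its value at $t_0$ and each $\Z^j_{B,t}=[-1,1]$, so $\w^r_\varphi(f,A,B,t)_w=\Omega^r_\varphi(f,A,\sqrt{2/A})_w+\sum_{j=1}^M E_r(f,[-1,1])_w\ge M\,E_r(f,[-1,1])_w$, since the main part is nonnegative.

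\textbf{Item (vi): the comparison of restricted and unrestricted main part moduli.} This is the only item that uses the structure of $\W^*(\Z)$ — no, actually re-reading: \prop{propprop} is stated "for any weight function $w$", so even (vi) must be purely set-theoretic, and the constant $c_*$ appearing there is just a free positive parameter, not the $\W^*(\Z)$ constant. I would argue: in $\Omega^r_\varphi(f,c_*t)_{\I_{A,t},w}$ the set $S=\I_{A,t}$ is fixed while $h$ ranges over $0<h\le c_*t$; I want to absorb this into an unrestricted main part modulus $\Omega^r_\varphi(f,A',c_*t)_w$ with $A'=A/\max\{c_*,c_*^2\}$, whose set $\I_{A',h}$ does vary with $h$. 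The key inclusion to establish is $\I_{A,t}\subset\I_{A',h}$ for all $0<h\le c_*t$: indeed for such $h$, $\rho(A',h,z_j)$ — I mean $A'\rho(h,z_j)=A'(h\varphi(z_j)+h^2)\le A'(c_*t\varphi(z_j)+c_*^2t^2)\le A'\max\{c_*,c_*^2\}(t\varphi(z_j)+t^2)=A\,\rho(t,z_j)$, so a point with $|x-z_j|\ge A\rho(t,z_j)$ automatically has $|x-z_j|\ge A'\rho(h,z_j)$, i.e. $\I_{A,t}\subset\I_{A',h}$. Consequently, for each fixed $h\le c_*t$, $\norm{w\,\Delta^r_{h\varphi}(f,\cdot,\I_{A,t})}{}\le\norm{w\,\Delta^r_{h\varphi}(f,\cdot,\I_{A',h})}{}$ (enlarging $J$ only adds nonzero differences), and taking $\sup_{0<h\le c_*t}$ on both sides yields exactly $\Omega^r_\varphi(f,c_*t)_{\I_{A,t},w}\le\Omega^r_\varphi(f,A',c_*t)_w$.

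\textbf{Main obstacle.} There is no deep obstacle; the proof is bookkeeping. The one place to be careful is the direction of the inequalities in (iv)–(vi): one must consistently track that \emph{shrinking} the admissible set $J$ in $\Delta^r_h(f,\cdot,J)$ \emph{decreases} (weakly) the sup-norm, because more difference values get set to $0$, and correspondingly a \emph{smaller} parameter $A$ (\emph{larger} set $\I_{A,h}$) \emph{increases} the modulus. The parameter juggling in (vi) — verifying $A'\max\{c_*,c_*^2\}\,\rho(t,z_j)\ge A'\rho(h,z_j)$ uniformly in $0<h\le c_*t$ by splitting into the $c_*\le 1$ and $c_*\ge 1$ cases — is the only slightly fiddly computation, and it is routine.
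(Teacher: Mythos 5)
Your proposal is correct and follows essentially the same route as the paper: the stabilization in (i)--(ii) from $C\rho(h,z_j)\geq Ch^2\geq 2$ when $h\geq\sqrt{2/C}$, the set-inclusion monotonicities for (iii)--(v), and for (vi) the inclusion $\I_{A,t}\subset\I_{A/\max\{c_*,c_*^2\},h}$ via $\rho(h,z_j)\leq\max\{c_*,c_*^2\}\rho(t,z_j)$ for $0<h\leq c_*t$. The only blemish is the initial mis-stated inclusion in (iv) ($\I_{A_1,h}\subset\I_{A_2,h}$), which you correctly reverse in the same paragraph, so the final argument is sound.
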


\begin{proof}
Properties~(\ref{pi}) and (\ref{pii}) immediately follow from the observation that, if $h\geq \sqrt{2/C}$, then $C \rho(h, z_j) \geq 2$.
Properties~(\ref{piii}) and (\ref{pv})   follow from the definition and the fact that
$\Z^j_{B_1,  t_1} \subset \Z^j_{B_2,  t_2}$ if $t_1 \leq t_2$ and $B_1 \leq B_2$.
Property~(\ref{piv}) is a consequence of the inclusion $\I_{A_2, h} \subset \I_{A_1, h}$ if $A_1 \leq A_2$.
Property~(\ref{pvi}) follows from the observation that,
for  $c_*>0$ and  $0< h\leq c_*t$,  since  $\rho(h, z_j)/\max\{c_*, c_*^2\} \leq \rho(t, z_j)$, then
$\I_{A, t}  \subset \I_{A/\max\{c_*, c_*^2\}, h}$.
\end{proof}

We  need an auxiliary lemma that is   used in the proofs of several results below.

\begin{lemma}\label{auaulemma}
Suppose that  $\Z\in\bZM$ and $w \in \W^*(\Z)$.
If $A, h>0$, $r\in\N$ and $x\in [-1,1]$ are such that
\[ 
[x-rh\varphi(x)/2,  x+rh\varphi(x)/2 ] \subset \I_{A, h} \quad \mbox{\rm (\ie $x\in  \Dom (A,h, r)$),}
\]
then, for any $y\in [x-rh\varphi(x)/2,  x+rh\varphi(x)/2 ]$,
\[
w(y) \sim w(x) \sim w_{n}(x) ,
\]
where $n := \lceil 1/h \rceil$, and the equivalence constants depend only on $r$, $A$ and the weight $w$.
\end{lemma}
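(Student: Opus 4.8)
The plan is to chain together two known equivalences: first that $w(y)\sim w(x)$ for $y$ in the interval $[x-rh\varphi(x)/2, x+rh\varphi(x)/2]$ (using the characterization of $\W^*(\Z)$ in \lem{properties}), and then that $w(x)\sim w_n(x)$ (using part~(\ref{v}) of that same lemma). Set $n:=\lceil 1/h\rceil$, so that $1/n \le h$ and $n < 1/h+1 \le 2/h$, hence $h \sim 1/n$ with absolute constants, and correspondingly $\rho(h,z)\sim\rho_n(z)=\rho(1/n,z)$ for every $z$, with constants independent of everything.

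First I would translate the hypothesis $x\in\Dom(A,h,r)$ into a statement about $\I_{A',1/n}$ for a suitable $A'$ depending only on $A$ and $r$. The hypothesis says $[x-rh\varphi(x)/2, x+rh\varphi(x)/2]\subset\I_{A,h}$, i.e.\ $\dist([x-rh\varphi(x)/2, x+rh\varphi(x)/2], z_j)\ge A\rho(h,z_j)$ for all $j$. Since $\rho(h,z_j)\ge c\,\rho_n(z_j)$ and $h\varphi(x)\le 2\rho_n(x)/1$ (more precisely $h\varphi(x) \le \rho(h,x) \sim \rho_n(x)$), the whole segment lies in $\I_{A',1/n}$ for some $A'=A'(A,r)>0$; also $x\in\I_{A',1/n}$ itself. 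This is the only place where a small amount of bookkeeping with the constants in $\rho$ is needed, and it is routine.

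Next, for any $y$ in that segment we have $|x-y|\le rh\varphi(x)/2 \le (r/2)\rho(h,x) \le B'\rho_n(x)$ for some $B'=B'(r)$, and $[x,y]\subset\I_{A',1/n}$ (since $[x,y]$ is contained in the segment). Thus the pair $(x,y)$ satisfies the hypotheses of \lem{properties}(\ref{iv}) with $A'$ and $B'$ in place of $A$ and $B$, and we conclude $c\,w(y)\le w(x)\le c^{-1}w(y)$ with $c$ depending only on $w$, $A'$, $B'$ — hence only on $w$, $A$, $r$. Finally, since $x\in\I_{A',1/n}$, \lem{properties}(\ref{v}) gives $w(x)\sim w_n(x)$ with constants depending only on $w$ and $A'$, hence only on $w$ and $A$. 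Combining the two chains yields $w(y)\sim w(x)\sim w_n(x)$, as claimed.

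I do not expect any serious obstacle; the only thing requiring care is the conversion of the continuous parameter $h$ and the function $\rho(h,\cdot)$ into the discrete parameter $n$ and $\rho_n=\rho(1/n,\cdot)$, keeping track that all emerging constants $A'$, $B'$ depend only on $A$ and $r$. Everything else is a direct appeal to the equivalent characterizations of membership in $\W^*(\Z)$ proved in \lem{properties}.
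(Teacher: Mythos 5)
Your proposal follows essentially the same route as the paper: pass from the continuous parameter $h$ to $n=\lceil 1/h\rceil$, check that the segment lies in $\I_{A',1/n}$ and that $|x-y|\leq B'\rho_n(x)$ with $A',B'$ depending only on $A$ and $r$, and then invoke \lemp{iv} for $w(y)\sim w(x)$ and \lemp{v} for $w(x)\sim w_n(x)$. The one point where your bookkeeping as written breaks down is the claim ``$n<1/h+1\le 2/h$, hence $h\sim 1/n$ with absolute constants'': the inequality $1/h+1\le 2/h$ requires $h\le 1$, and for $h>1$ (i.e.\ $n=1$) there is no a priori two-sided comparison between $h$ and $1/n$, so the upper bound $\rho(h,x)\le C\rho_n(x)$ that you need is not justified at this stage. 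The paper closes exactly this case by observing first that if $h>\sqrt{2/A}$ then $A\rho(h,z_j)>2$ and $\I_{A,h}=\emptyset$, so the hypothesis is vacuous; hence one may assume $h\le\sqrt{2/A}$, and then for $n=1$ one gets $\rho(h,x)\le\rho(\sqrt{2/A},x)\le \max\{\sqrt{2/A},2/A\}\,\rho_1(x)$, a constant depending on $A$, which is permitted since the equivalence constants in the lemma are allowed to depend on $A$. With that one-line observation added (and the resulting constants depending on $A$ rather than being absolute), your argument is complete and coincides with the paper's proof.
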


\begin{proof} First we note that, if $h > \sqrt{2/A}$, then $A\rho(h, z_j) > 2$, and so $\I_{A, h}=\emptyset$. Hence, we can assume that $0<h\leq \sqrt{2/A}$.
Now, if $n  = \lceil 1/h \rceil$, then  $n\in\N$,  $n^{-1} \leq h < (n-1)^{-1}$ and $\I_{A, h} \subset \I_{A, 1/n}$.
Moreover, if $n\geq 2$, then $(n-1)^{-1} \leq 2/n$ and so
$\rho(h, x) \leq 4 \rho_n(x) $ and, if $n=1$, then
 \[
\rho(h, x) \leq  \rho(\sqrt{2/A}, x) \leq \max\{ \sqrt{2/A}, 2/A \} \rho_n(x) .
 \]
Hence, if $y\in [x-rh\varphi(x)/2,  x+rh\varphi(x)/2 ]$, then $[x,y] \subset \I_{A, 1/n}$ and
\[
|x-y| \leq rh\varphi(x)/2
\leq
r\rho(h,x)/2 \leq (r/2) \max\{4, \sqrt{2/A}, 2/A \}  \rho_n(x) .
\]
Therefore, \lemp{iv} implies that $w(y)\sim w(x)$, and \lemp{v} yields the equivalence $w(x)\sim w_n(x)$.
\end{proof}

In the following lemma and in the sequel, we
  use the usual notation
\[
\L_\infty^w := \left\{ f: [-1,1]\mapsto\R \st \norm{wf}{} <\infty \right\} .
\]

\begin{lemma} \label{normestimate}
If $\Z\in\bZM$, $w \in \W^*(\Z)$,   $f\in\L_\infty^w$,
  $r\in\N$, and $A, B,  t >0$,  then
\[
 \w_\varphi^r(f, A, B,  t)_{ w}    \leq     c \norm{wf}{}  ,
\]
where $c$ depends only on  $r$, $A$  and the weight $w$.
\end{lemma}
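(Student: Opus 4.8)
The goal is to bound both summands in the definition \ineq{compmod} of $\w_\varphi^r(f,A,B,t)_w$ by $c\norm{wf}{}$. For the error terms $E_r(f,\Z_{B,t}^j)_w$ this is trivial: taking the zero polynomial as competitor gives $E_r(f,\Z_{B,t}^j)_w \le \norm{wf}{\Z_{B,t}^j} \le \norm{wf}{}$, and there are $M$ of them, so their sum is at most $M\norm{wf}{}$. So the real content is the estimate on the main part modulus $\Omega_\varphi^r(f,A,t)_w = \sup_{0<h\le t}\norm{w(\cdot)\Delta_{h\varphi(\cdot)}^r(f,\cdot,\I_{A,h})}{}$.

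For a fixed $h>0$ and a point $x\in\Dom(A,h,r)$, the difference $\Delta_{h\varphi(x)}^r(f,x,\R)$ is a finite signed sum $\sum_{i=0}^r\binom{r}{i}(-1)^{r-i}f(x-rh\varphi(x)/2+ih)$ of values of $f$ at the $r+1$ points $y_i := x-rh\varphi(x)/2+ih$, all of which lie in $[x-rh\varphi(x)/2, x+rh\varphi(x)/2]$. Hence
\[
w(x)\left|\Delta_{h\varphi(x)}^r(f,x,\R)\right| \le \sum_{i=0}^r\binom{r}{i}\, w(x)\,|f(y_i)| = \sum_{i=0}^r\binom{r}{i}\,\frac{w(x)}{w(y_i)}\, w(y_i)|f(y_i)| .
\]
Since $x\in\Dom(A,h,r)$ means exactly that $[x-rh\varphi(x)/2,x+rh\varphi(x)/2]\subset\I_{A,h}$, \lem{auaulemma} applies and gives $w(y_i)\sim w(x)$ with equivalence constants depending only on $r$, $A$ and $w$; in particular $w(x)/w(y_i)\le c(r,A,w)$. (One should note $w(y_i)>0$ here: if some $w(y_i)=0$ then $y_i\in\Z$ by \rem{remark} [a weight in $\W^*(\Z)$ vanishes only on $\Z$], contradicting $y_i\in\I_{A,h}$ since $\I_{A,h}$ excludes a neighborhood of each $z_j$; and if $w(x)=0$ the term is already zero.) Therefore $w(x)|\Delta_{h\varphi(x)}^r(f,x,\R)| \le c\sum_{i=0}^r\binom{r}{i}\,w(y_i)|f(y_i)| \le c\,2^r\,\norm{wf}{}$, where the constant $c$ depends only on $r$, $A$ and $w$ — crucially \emph{not} on $h$ or $x$. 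Taking the essential supremum over $x\in\Dom(A,h,r)$ (equivalently over $x$ with $\Delta_{h\varphi(x)}^r(f,x,\I_{A,h})\ne0$) and then the supremum over $0<h\le t$ yields $\Omega_\varphi^r(f,A,t)_w \le c\norm{wf}{}$. Adding the bound $M\norm{wf}{}$ for the error-sum gives the claim with a constant depending only on $r$, $A$ and $w$.

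The only subtle point — and the step I would be most careful about — is the uniformity of the constant in \lem{auaulemma}: that lemma's equivalence constants depend only on $r$, $A$, $w$ and not on $h$, $x$, $y$, which is precisely what lets the supremum over $h$ and $x$ be taken without losing control. Everything else is a one-line application of the triangle inequality on the finite difference together with the defining membership $x\in\Dom(A,h,r)$ that makes the lemma applicable. (Note also that $B$ enters only through the error terms $E_r(f,\Z^j_{B,t})_w \le \norm{wf}{}$, and that bound does not depend on $B$, which is why the final constant does not depend on $B$ either.)
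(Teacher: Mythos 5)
Your proposal is correct and follows essentially the same route as the paper: bound the $E_r$ terms trivially by $\norm{wf}{}$, and control the main part modulus by writing the symmetric difference as a sum over the nodes $y_i\in[x-rh\varphi(x)/2,x+rh\varphi(x)/2]$ and invoking Lemma~\ref{auaulemma} to replace $w(x)$ by $w(y_i)$ uniformly in $h$ and $x$. (The paper avoids the division by $w(y_i)$ by using $w(x)\le c\,w(y_i(x))$ directly, and note the nodes should be $x-rh\varphi(x)/2+ih\varphi(x)$, but these are cosmetic points.)
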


\begin{proof}
First of all, it is clear that
\[
\sum_{j=1}^M  E_r(f, \Z_{B,t}^j)_{w}
\leq \sum_{j=1}^M  \norm{wf}{\Z_{B,t}^j} \leq M \norm{wf}{}.
\]
We now let $h \in (0, t]$   and $x$ be such that $[x-rh\varphi(x)/2,  x+rh\varphi(x)/2 ] \subset \I_{A, h}$, and denote  $y_i(x) := x+(i-r/2)h \varphi(x)$.
Then,
\lem{auaulemma} implies that $w(y_i(x)) \sim w(x)$, $0\leq i \leq r$,  and so

 \begin{eqnarray*}
    w(x) \left|\Delta_{h\varphi(x)}^r(f,x, \I_{A,h})\right|   & \leq &     w(x) \sum_{i=0}^r  {r \choose i}
 \left| f(y_i(x)) \right|
 \leq   2^r w(x) \max_{0 \leq i \leq r} \left| f(y_i(x)) \right| \\
 & \leq &  c    \max_{0 \leq i \leq r} \left|w(y_i(x))  f(y_i(x)) \right| .
\end{eqnarray*}

This   yields
$ \Omega_\varphi^r(f, A, t)_{ w}\leq     c \norm{wf}{}$, which completes the proof of the lemma.
\end{proof}

Taking into account that $\w_\varphi^r(f, A, B,  t)_{ w} = \w_\varphi^r(f-q, A, B,  t)_{ w}$, for any $q\in\Poly_r$, we immediately get the following corollary.

\begin{corollary} \label{cornormestimate}
If $\Z\in\bZM$, $w \in \W^*(\Z)$,   $f\in\L_\infty^w$,
  $r\in\N$, and $A, B,  t >0$,  then
\[
 \w_\varphi^r(f, A, B,  t)_{ w}    \leq   c E_r(f, [-1,1])_{w},
\]
where $c$ depends only on  $r$, $A$  and the weight $w$.
\end{corollary}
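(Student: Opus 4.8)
The plan is to deduce \cor{cornormestimate} from \lem{normestimate} by a one-line observation about the invariance of the modulus under polynomial perturbations. First I would note that, by definition \ineq{compmod}, for any $q\in\Poly_r$ we have $\Delta_{h\varphi(x)}^r(q,x,J)\equiv 0$ (the $r$th symmetric difference annihilates polynomials of degree $\leq r-1$), so $\Omega_\varphi^r(f,A,t)_w=\Omega_\varphi^r(f-q,A,t)_w$; likewise $E_r(f,\Z_{B,t}^j)_w=E_r(f-q,\Z_{B,t}^j)_w$ since $\Poly_r$ is translation-invariant under subtracting elements of $\Poly_r$. Hence $\w_\varphi^r(f,A,B,t)_w=\w_\varphi^r(f-q,A,B,t)_w$ for every $q\in\Poly_r$, which is precisely the remark made just before the corollary statement.

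Next I would apply \lem{normestimate} to $f-q$ in place of $f$: this is legitimate provided $f-q\in\L_\infty^w$, which holds because $f\in\L_\infty^w$ and $\norm{wq}{}<\infty$ (the latter since $q$ is bounded on $[-1,1]$ and $w$, being an \astar{} weight, is bounded — cf.\ the discussion after \ineq{astar} that $w(x)\leq L^* w[-1,1]/2$). This yields $\w_\varphi^r(f-q,A,B,t)_w\leq c\norm{w(f-q)}{}$ with $c$ depending only on $r$, $A$ and $w$. Combining with the invariance from the previous step gives $\w_\varphi^r(f,A,B,t)_w\leq c\norm{w(f-q)}{}$ for every $q\in\Poly_r$.

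Finally I would take the infimum over $q\in\Poly_r$ on the right-hand side. The left side does not depend on $q$, so it is bounded by $c\inf_{q\in\Poly_r}\norm{w(f-q)}{}=c\,E_r(f,[-1,1])_w$, which is the desired conclusion. There is really no obstacle here: the only point needing a moment's care is the admissibility of $f-q$ as an element of $\L_\infty^w$, and this is immediate from boundedness of \astar{} weights; everything else is the formal manipulation of taking an infimum over a quantity that the left-hand side is independent of.
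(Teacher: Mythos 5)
Your proposal is correct and is exactly the paper's argument: the remark preceding the corollary notes the invariance $\w_\varphi^r(f, A, B, t)_{w} = \w_\varphi^r(f-q, A, B, t)_{w}$ for $q\in\Poly_r$, and the corollary then follows by applying \lem{normestimate} to $f-q$ and taking the infimum over $q$. Your extra check that $f-q\in\L_\infty^w$ (via boundedness of \astar{} weights) is a fine, if routine, addition.
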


\begin{lemma} \label{twot}
If $\Z\in\bZM$, $w \in \W^*(\Z)$,   $f\in\L_\infty^w$,
  $r \in\N$,  and $A, t >0$,  then
\be \label{ineqtwot}
\Omega_\varphi^r(f, A, 2 t)_{ w}    \leq      c   \Omega_\varphi^r(f, \sqrt{2} A , \sqrt{2} t)_{ w} ,
\ee
where $c$ depends only on  $r$, $A$  and the weight $w$.
\end{lemma}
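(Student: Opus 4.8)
The plan is to estimate $\Omega_\varphi^r(f,A,2t)_w$ by showing that the $r$th difference with step $h\le 2t$ at a point $x\in\Dom(A,h,r)$ can be controlled by differences with step $h/\sqrt2\le\sqrt2\,t$ at nearby points, plus an ``$A\to\sqrt2 A$'' adjustment of the excluded neighborhoods. The elementary identity I would use is the standard step-halving relation for symmetric differences: $\Delta_h^r(f,x)$ is a finite linear combination (with absolutely bounded coefficients depending only on $r$) of the differences $\Delta_{h/2}^r(f,\,x+jh/2)$ for $j$ in a bounded range $|j|\le r$. Applying this with the variable step $h\varphi(x)$ one writes, for $x\in[-1,1]$ and $0<h\le 2t$,
\[
\Delta_{h\varphi(x)}^r(f,x,\R)=\sum_{|j|\le r} c_{r,j}\,\Delta_{(h/2)\varphi(x)}^r\bigl(f,\,x+j(h/2)\varphi(x),\,\R\bigr),
\]
an identity of numbers whose validity needs no containment hypothesis once we agree to evaluate $\Delta^r(\cdot,\cdot,\R)$, which is never ``otherwise''. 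Here I am using that the points involved have step $(h/2)\varphi(x)$, not $(h/2)\varphi(x+\cdots)$; since $\varphi$ is comparable at comparable points (a standard fact, and in any case $|x\pm j h\varphi(x)/2|$ stays within a bounded multiple of $\rho(h,x)$ of $x$), replacing $\varphi(x)$ by $\varphi(y)$ at the shifted center $y=x+j(h/2)\varphi(x)$ costs only another bounded step-halving/averaging, which I would absorb; alternatively one can quote that $\Delta_{h\varphi(x)}^r(f,x)$ and $\Delta_{h\varphi(y)}^r(f,y)$ differ by controlled lower-order terms for $|x-y|\le C\rho(h,x)$ exactly as in the Ditzian–Totik calculus.

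The domain bookkeeping is the heart of the matter. Fix $h\le 2t$ and $x\in\Dom(A,h,r)$, i.e.\ $[x-rh\varphi(x)/2,\,x+rh\varphi(x)/2]\subset\I_{A,h}$. Put $h':=h/\sqrt2\le\sqrt2\,t$ and let $y:=x+j(h/2)\varphi(x)$ with $|j|\le r$. I claim that for the choice $A':=\sqrt2 A$ we have $y\in\Dom(A',h',r)$, at least after shrinking $A$ by a harmless absolute factor and enlarging the multiplicative constant; more precisely I would verify $[y-rh'\varphi(y)/2,\,y+rh'\varphi(y)/2]\subset\I_{\sqrt2 A,\,h'}$. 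For this, note $\rho(h',z_k)=\rho(h/\sqrt2,z_k)\ge \tfrac1{\sqrt2}\rho(h,z_k)$, so $\sqrt2 A\,\rho(h',z_k)\ge A\rho(h,z_k)$; hence $\I_{A,h}\subset\I_{\sqrt2 A,h'}$. It remains to see that the slightly longer (by the $\varphi(y)$ vs $\varphi(x)$ wobble) interval around $y$ still fits inside $\I_{A,h}$, which follows because $y$ and its difference-interval lie within the original interval $[x-r h\varphi(x)/2,x+rh\varphi(x)/2]$ up to a bounded multiple of $\rho(h,x)$, and one can pre-shrink $A$ by that bounded factor exactly as in the proof of \lem{auaulemma} (the passage there between $\rho(h,\cdot)$, $\rho_n(\cdot)$, and $\Dom$). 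If notational convenience dictates, I would instead prove \eqref{ineqtwot} with constants $\sqrt2 A$ replaced by any fixed larger constant and then invoke Property~(\ref{piv}) of \prop{propprop} to restore $\sqrt2 A$; since the statement only asks for \emph{some} constant $c$ and the particular value $\sqrt2 A$, a short remark reconciling this suffices.

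Granting the inclusion, the estimate is immediate. By \lem{auaulemma} applied in the $\I_{\sqrt2 A,h'}$-frame, $w(y)\sim w(y+i(h'/2)\cdot)\sim\cdots$ with constants depending only on $r,A,w$, and likewise $w(x)\sim w(y)$ since $|x-y|\le r h\varphi(x)/2\le C\rho(h,x)$ with $[x,y]$ inside $\I_{A,h}\subset\I_{A,1/n}$ (using \lemp{iv}). Therefore
\[
w(x)\bigl|\Delta_{h\varphi(x)}^r(f,x,\I_{A,h})\bigr|
\le \sum_{|j|\le r}|c_{r,j}|\,w(x)\bigl|\Delta_{h'\varphi(y)}^r(f,y,\R)\bigr|
\le c\sum_{|j|\le r} w(y)\bigl|\Delta_{h'\varphi(y)}^r(f,y,\I_{\sqrt2 A,h'})\bigr|,
\]
and since each $y$ satisfies $y\in\Dom(\sqrt2 A,h',r)$ the right-hand quantities are each $\le\sup_{0<h'\le\sqrt2 t}\norm{w(\cdot)\Delta_{h'\varphi(\cdot)}^r(f,\cdot,\I_{\sqrt2 A,h'})}{}=\Omega_\varphi^r(f,\sqrt2 A,\sqrt2 t)_w$. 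Taking the supremum over $x$ and $0<h\le 2t$ gives \eqref{ineqtwot}. The main obstacle, as indicated, is purely geometric: making the step-halving identity respect the domain restriction, i.e.\ checking that the shifted, slightly-rescaled difference-intervals still avoid the $z_k$-neighborhoods when one passes from parameter $(A,h)$ to $(\sqrt2 A,h/\sqrt2)$; once the inclusion $\I_{A,h}\subset\I_{\sqrt2 A,h/\sqrt2}$ (plus a bounded-factor shrink absorbed into constants) is in hand, the weight comparisons come for free from \lem{auaulemma} and \lem{properties}.
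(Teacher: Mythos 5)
Your skeleton is the same as the paper's (the step-halving identity \ineq{pp}, domain bookkeeping, then weight comparison via \lem{auaulemma}), but there is a genuine gap at the decisive step: converting the differences $\Delta^r_{(h/2)\varphi(x)}(f,y,\R)$ produced by the identity, whose step is $(h/2)\varphi(x)$, into differences of the admissible form $\Delta^r_{h''\varphi(y)}(f,y,\cdot)$ for some $h''\leq\sqrt2\,t$. You propose to ``replace $\varphi(x)$ by $\varphi(y)$'' at the cost of ``bounded lower-order terms'' or a Ditzian--Totik-type comparison; no such comparison exists here. For a general $f\in\L_\infty^w$, two symmetric differences with different (even comparable) steps sample $f$ at entirely different points and are not comparable quantities, so your final display, which silently substitutes $\Delta^r_{h'\varphi(y)}(f,y,\R)$ with $h'=h/\sqrt2$ for $\Delta^r_{(h/2)\varphi(x)}(f,y,\R)$, does not follow. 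The correct move is to make the substitution \emph{exact}: define $h'':=(h/2)\varphi(x)/\varphi(y)$, so that $\Delta^r_{(h/2)\varphi(x)}(f,y)=\Delta^r_{h''\varphi(y)}(f,y)$ identically, and then prove the quantitative lower bound $\varphi(y)\geq\varphi(x)/\sqrt2$ (this uses that $x\pm r(h/2)\varphi(x)\in[-1,1]$ forces $|x|/\varphi(x)\leq(1-r^2(h/2)^2)/(rh)$, whence $[\varphi(y)/\varphi(x)]^2\geq 1/2$). This is exactly where the $\sqrt2$ in both $\sqrt2\,t$ and $\sqrt2\,A$ originates, and it is the content your proposal is missing; ``$\varphi$ is comparable at comparable points'' does not deliver the needed direction and constant, nor does it license changing the step.

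Two further points. First, your inclusion $\I_{A,h}\subset\I_{\sqrt2 A,\,h/\sqrt2}$ rests on the claim $\rho(h/\sqrt2,z_k)\geq\rho(h,z_k)/\sqrt2$, which is false: $\rho(h/\sqrt2,z_k)=(h/\sqrt2)\varphi(z_k)+h^2/2$, and $h^2/2<h^2/\sqrt2$, so the inequality fails whenever $\varphi(z_k)$ is small relative to $h$ (e.g.\ $z_k=\pm1$). With the correct $h''$ one instead verifies $\rho(2h,z_j)\geq\sqrt2\,\rho(h'',z_j)$ directly from $h''\leq\sqrt2\,h$, giving $\I_{A,2h}\subset\I_{\sqrt2 A,h''}$; and the difference interval around $y$ with step $h''\varphi(y)=h\varphi(x)$ sits exactly inside $[x-rh\varphi(x),x+rh\varphi(x)]\subset\I_{A,2h}$, so no ``wobble'' or pre-shrinking of $A$ is needed. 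Second, your fallback of proving the bound with a larger constant in place of $\sqrt2 A$ and appealing to \prop{propprop}(\ref{piv}) is legitimate for the inequality as stated, but it does not repair the step-conversion gap, which is independent of the choice of constants.
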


Now, Proposition~\ref{propprop}(\ref{piii} and \ref{piv}) and \lem{twot} imply the following result.

\begin{corollary} \label{corollary411}
If $\Z\in\bZM$, $w \in \W^*(\Z)$,   $f\in\L_\infty^w$,
  $r \in\N$,  and $A, t >0$,  then
\[
\Omega_\varphi^r(f, A, t)_{ w} \sim \Omega_\varphi^r(f, \sqrt{2} A, t)_{ w} ,
\]
 and so
\[
\Omega_\varphi^r(f, A, t)_{ w} \sim \Omega_\varphi^r(f, 1 , t)_{ w} ,
\]
where the equivalence constants depend  only on  $r$, $A$  and the weight $w$.

Moreover,
\[
\Omega_\varphi^r(f, 1 , t)_{ w} \leq \Omega_\varphi^r(f, 1 , 2t)_{ w} \leq c \Omega_\varphi^r(f, 1 , t)_{ w} ,
\]
where $c$ depends only on $r$ and the weight $w$.
\end{corollary}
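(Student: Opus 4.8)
The plan is to derive everything from \lem{twot} together with the two monotonicity properties Proposition~\ref{propprop}(\ref{piii}) (monotone nondecreasing in $t$) and Proposition~\ref{propprop}(\ref{piv}) (monotone nonincreasing in the first parameter). First I would establish the ``one-step'' equivalence $\Omega_\varphi^r(f,A,t)_w\sim\Omega_\varphi^r(f,\sqrt2 A,t)_w$ for all $A,t>0$. The inequality $\Omega_\varphi^r(f,\sqrt2 A,t)_w\le\Omega_\varphi^r(f,A,t)_w$ is immediate from Proposition~\ref{propprop}(\ref{piv}) since $A\le\sqrt2 A$. For the reverse, apply \lem{twot} with the step parameter $t/\sqrt2$ in place of $t$, which gives $\Omega_\varphi^r(f,A,\sqrt2 t)_w\le c\,\Omega_\varphi^r(f,\sqrt2 A,t)_w$; then Proposition~\ref{propprop}(\ref{piii}), applied with $t\le\sqrt2 t$, yields $\Omega_\varphi^r(f,A,t)_w\le\Omega_\varphi^r(f,A,\sqrt2 t)_w\le c\,\Omega_\varphi^r(f,\sqrt2 A,t)_w$, with $c$ depending only on $r$, $A$ and $w$.

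Next I would iterate this one-step equivalence finitely often to reach first parameter $1$. Given $A>0$, fix a nonnegative integer $k$ large enough that $2^{-k/2}\le\min\{A,1\}$ and $2^{k/2}\ge\max\{A,1\}$. Applying the one-step equivalence $k$ times gives $\Omega_\varphi^r(f,1,t)_w\sim\Omega_\varphi^r(f,2^{k/2},t)_w$ and $\Omega_\varphi^r(f,1,t)_w\sim\Omega_\varphi^r(f,2^{-k/2},t)_w$, with constants depending only on $r$, $A$ and $w$ (each is a product of $k$ constants whose intermediate values of the first parameter are all determined by $A$). If $A\ge1$, then $1\le A\le2^{k/2}$ and Proposition~\ref{propprop}(\ref{piv}) sandwiches $\Omega_\varphi^r(f,2^{k/2},t)_w\le\Omega_\varphi^r(f,A,t)_w\le\Omega_\varphi^r(f,1,t)_w$; if $A<1$, then $2^{-k/2}\le A\le1$ and it gives $\Omega_\varphi^r(f,1,t)_w\le\Omega_\varphi^r(f,A,t)_w\le\Omega_\varphi^r(f,2^{-k/2},t)_w$. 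Either way $\Omega_\varphi^r(f,A,t)_w\sim\Omega_\varphi^r(f,1,t)_w$.

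Finally, for the ``moreover'' part: the left inequality $\Omega_\varphi^r(f,1,t)_w\le\Omega_\varphi^r(f,1,2t)_w$ is just Proposition~\ref{propprop}(\ref{piii}) with $t\le2t$. For the right inequality I would chain two applications of \lem{twot} followed by one monotonicity step: with first parameter $1$ and step $t$, $\Omega_\varphi^r(f,1,2t)_w\le c\,\Omega_\varphi^r(f,\sqrt2,\sqrt2 t)_w$; with first parameter $\sqrt2$ and step $t/\sqrt2$, $\Omega_\varphi^r(f,\sqrt2,\sqrt2 t)_w\le c\,\Omega_\varphi^r(f,2,t)_w$; and Proposition~\ref{propprop}(\ref{piv}) gives $\Omega_\varphi^r(f,2,t)_w\le\Omega_\varphi^r(f,1,t)_w$. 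Both invocations of \lem{twot} now have a fixed first parameter ($1$ and $\sqrt2$), so the resulting $c$ depends only on $r$ and $w$, as stated.

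I do not anticipate any real difficulty here; the argument is mechanical. The one point to watch is the constant bookkeeping in the iteration of the second paragraph: each use of the one-step equivalence introduces a constant depending on the \emph{current} value of the first parameter, so one must fix the number of steps $k$ (hence the finite list of intermediate parameters) in terms of the original $A$ before multiplying the constants together, which is exactly what keeps the final equivalence constant dependent on $r$, $A$ and $w$ only.
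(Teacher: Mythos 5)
Your argument is correct and is exactly the derivation the paper intends: the paper states the corollary as an immediate consequence of Proposition~\ref{propprop}(\ref{piii}), (\ref{piv}) and \lem{twot} without spelling out the details, and your one-step equivalence, finite iteration with the sandwich via monotonicity in the first parameter, and the two applications of \lem{twot} with fixed parameters $1$ and $\sqrt{2}$ for the ``moreover'' part fill in precisely those details. Your remark on fixing the number of iteration steps $k$ in terms of $A$ before multiplying constants is the right bookkeeping and matches the stated dependence of the constants.
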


\begin{proof}[Proof of \lem{twot}]
Recall a rather well known identity (see \cite{pp}*{(5) on p. 42}, for example)
\be \label{pp}
\Delta_{2 h}^r (f, x) = \sum_{i_1=0}^{1} \dots \sum_{i_r=0}^{1} \Delta_{h}^r \left(f, x + [i_1 +\dots +i_r - r/2]h \right)  .
\ee

Now, we fix $h \in (0, t]$, and let   $x$  be  a fixed number  such that $[x-r h\varphi(x) ,  x+r h\varphi(x)  ] \subset \I_{A, 2h}$ (\ie $x\in\Dom(A, 2h, r)$).
We have
 \begin{eqnarray*}
     \left|\Delta_{2h\varphi(x)}^r(f,x, \I_{A, 2h})\right|
& \leq &   \sum_{i_1=0}^{1} \dots \sum_{i_r=0}^{1} \left|\Delta_{h\varphi(x)}^r \left(f, x + [i_1 +\dots +i_r - r/2]h\varphi(x) \right)\right| \\
& \leq &
 2^r  \left|\Delta_{h\varphi(x)}^r \left(f, y \right)\right|  =:  2^r  F ,
 \end{eqnarray*}
where $y  := x + \gamma h\varphi(x)$, and
$\gamma$ is such that $\gamma +r/2 \in \{0, 1, \dots, r\}$ (and so $|\gamma| \leq r /2$) and
\[
\left|\Delta_{h\varphi(x)}^r \left(f, y \right)\right|  = \max_{0\leq m \leq r}
 \left|\Delta_{h\varphi(x)}^r \left(f, x + [m - r/2]h\varphi(x) \right)\right| .
\]
Note that \lem{auaulemma} implies that $w(x)\sim w(y)$.
Also,
since $x\pm r h\varphi(x) \in [-1,1]$, we have $|x| \leq (1-r^2h^2)/(1+r^2h^2)$, which implies
\[
{|x| \over \varphi(x)} \leq {1-r^2h^2 \over 2rh} ,
\]
and so
\begin{eqnarray*}
\left[\varphi(y ) \over \varphi(x) \right]^2 & = & 1 - \gamma^2 h^2 - 2\gamma h {x \over \varphi(x)}
\geq 1 - \gamma^2 h^2 - 2 |\gamma| h {|x| \over \varphi(x)}
 \geq \frac{1}{2} + \frac{r^2h^2}{4}  \geq \frac{1}{2}.
\end{eqnarray*}
Therefore, $ \varphi(x) \leq \sqrt{2} \varphi(y )$, and
\[
w(x) F  \leq c w (y) \left|\Delta_{h^*\varphi(y)}^r \left(f,  y  \right)\right| ,
\]
where
\[
0<h^* :=   {h\varphi(x) \over \varphi(y)} \leq \sqrt{2} h \leq \sqrt{2} t .
\]
We now note that $\rho(2h,z_j) \geq \sqrt{2}\rho(h^* , z_j)$ which implies $\I_{A, 2h} \subset \I_{\sqrt{2}A , h^*}$, and so
 \begin{eqnarray*}
&&  \left[y  - rh^*\varphi (y)/2, y  + rh^*\varphi (y)/2 \right]
  =
\left[ x +(\gamma-r/2) h \varphi(x), x +(\gamma+r/2) h \varphi(x) \right]  \\
&& \mbox{} \subset   \left[ x -r h \varphi(x), x + r h \varphi(x) \right] \subset \I_{A, 2h} \subset \I_{\sqrt{2}A , h^*}.
 \end{eqnarray*}
Therefore, $\Delta_{h^*\varphi(y)}^r \left(f,  y  \right) = \Delta_{  h^* \varphi(y)}^r \left(f,  y , \I_{\sqrt{2}A,  h^*} \right)$, and so
we have
\begin{eqnarray*}
w(x) F &\leq &
 c  \sup_{0<h^* \leq \sqrt{2} t}
 \esssup_{y }
  w (y) \left|\Delta_{  h^* \varphi(y)}^r \left(f,  y , \I_{\sqrt{2}A,  h^*} \right)\right|
\leq  c \Omega_\varphi^r(f, \sqrt{2}A , \sqrt{2} t)_{ w}
 \end{eqnarray*}
for almost all $x\in\Dom(A, 2h, r)$.
The lemma is now proved.
%
\end{proof}

\begin{lemma} \label{newlem21}
Let $\Z\in\bZM$,  $w \in \W^*(\Z)$, $r\in\N$, $z  \in \Z$, $z\neq 1$, $0<\e <\D/2$,   $I := [z+\e/2,z+\e]$, and
let $J := [z+\e,z+\e+\delta]$ with $\delta$ such that $0< \delta \leq \e/(2r)$.  Then, for any  $h\in [\delta, \e/(2r)]$
 and any polynomial $q\in\Poly_r$, we have
\be \label{osn}
\norm{w(f-q)}{J}
\leq
c \norm{ w(\cdot) \Delta_{h}^r (f, \cdot, I\cup J)}{I\cup J} + c  \norm{w(f-q)}{I}  .
\ee
Additionally,
\be \label{osn2}
\norm{w(f-q)}{J}
\leq
c  \Omega_\varphi^r(f, 54 \delta t/\e)_{I\cup J, w}
 + c  \norm{w(f-q)}{I} ,
\ee
where $0<t < 1$ is such that $\e = \rho(t,z)$, and all
constants $c$ depend only on $r$ and the weight $w$.
\end{lemma}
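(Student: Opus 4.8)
The plan is to first establish \eqref{osn} by a standard "polynomial-free" argument comparing $f-q$ on the short interval $J$ to its values on the adjacent interval $I$, using finite differences of step $h$ to bridge the gap, and then deduce \eqref{osn2} from \eqref{osn} by choosing $h$ appropriately and translating the ordinary modulus into the $\varphi$-weighted main-part modulus on $I\cup J$. Since \eqref{osn} is invariant under replacing $f$ by $f-q$ (the $r$th difference of a polynomial of degree $<r$ vanishes), I may assume $q=0$ and must bound $\norm{wf}{J}$ by $c\norm{w\Delta_h^r(f,\cdot,I\cup J)}{I\cup J}+c\norm{wf}{I}$.

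First I would record the geometry: $z\in\Z$, $z\ne1$, $0<\e<\D/2$, so $[z+\e/2, z+\e+\delta]\subset [z, z+\D)$ contains no other point of $\Z$ and stays inside $[-1,1]$; moreover for $h\in[\delta,\e/(2r)]$ every point $x\in J$ has $[x-rh/2, x+rh/2]\subset I\cup J$ (since $|J|=\delta\le h$ and $rh\le\e/2=|I|$), so the difference $\Delta_h^r(f,x,I\cup J)$ is genuinely $\Delta_h^r(f,x)$ there and the nodes $x+(i-r/2)h$, $i=0,\dots,r$, all lie in $I\cup J$. Next, the key weight fact: on $I\cup J$, which is an interval of length $\e/2+\delta\le\e$ lying at distance $\ge\e/2$ from $z$ and (comfortably) from every other $z_j$, the weight $w$ is doubling-stable — more precisely $w(u)\sim w(v)$ for all $u,v\in I\cup J$ with constants depending only on $w$. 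I would get this from \lemp{iv}: with $n:=\lceil 1/t\rceil$ where $\e=\rho(t,z)$, one has $I\cup J\subset\I_{c,1/n}$ for an absolute $c$ and $\mathrm{diam}(I\cup J)\le\e\sim\rho_n(z)\lesssim\rho_n(x)$ for $x\in I\cup J$, so \ineq{nochange} applies pairwise. Granting this, $w$ may be pulled out of all the norms up to constants, and \eqref{osn} reduces to the unweighted estimate
\[
\norm{f}{J}\le c\,\norm{\Delta_h^r(f,\cdot,I\cup J)}{I\cup J}+c\,\norm{f}{I},
\]
which is the classical statement that the value of $f$ on an interval is controlled by its $r$th differences together with its values on an adjacent comparable interval: writing $f(x)$ for $x\in J$ as an alternating sum of $\Delta_h^r(f,\cdot)$ evaluated at a bounded number of shifted points that march from $J$ back into $I$ (a telescoping / Marchaud-type identity, using $|J|\le h$ and $|I|\ge rh$ so that $O(1)$ steps suffice), and bounding the terms that land in $I$ by $\norm{f}{I}$.

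For \eqref{osn2}, I would specialize $h$ to convert the plain difference into the $\varphi$-modulus. On $I\cup J$ we have $\varphi(x)\sim\varphi(z+\e)$ (same doubling-interval reasoning, or directly since $\e<\D/2\le1$ keeps us away from $\pm1$ unless $z$ itself is near an endpoint — and even then $\varphi$ is essentially constant on an interval of length $\lesssim\e=\rho(t,z)$), and one checks $\varphi(z+\e)\sim\e/t$ up to constants from $\e=\rho(t,z)=t\varphi(z)+t^2$. Setting $h:=s\varphi(x)$ with a suitable fixed small multiple $s$ of $\delta t/\e$ makes $h\in[\delta,\e/(2r)]$ for every $x\in I\cup J$, so \eqref{osn} applies, and the right-hand side difference $\Delta_{s\varphi(x)}^r(f,x,I\cup J)$ is exactly of the form appearing in $\Omega_\varphi^r(f,s;I\cup J,w)$; tracking the constants in "$s\sim\delta t/\e$" gives the stated bound $c\,\Omega_\varphi^r(f,54\delta t/\e)_{I\cup J,w}$ (the explicit $54$ coming from the above comparisons — I would not fuss over the exact numeral beyond verifying $54$ is a safe choice). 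I expect the main obstacle to be the bookkeeping in the unweighted telescoping identity — choosing the shift pattern so that after $O(1)$ applications all evaluation points have moved into $I$ while staying inside $I\cup J$, uniformly over $h\in[\delta,\e/(2r)]$ — together with verifying carefully that $w$ and $\varphi$ really are stable on $I\cup J$ via \lem{properties}, since that is what licenses extracting them from every norm.
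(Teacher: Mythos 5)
Your plan follows the paper's proof essentially step for step: the same finite-difference identity expressing $g(x+rh/2)$ through $\Delta_h^r(f,x)$ and $r$ values of $g$ lying in $I$ (your ``$O(1)$-step telescoping'' in fact needs only a single step, since $h\ge\delta=|J|$ and $rh\le |I|$), the same appeal to \lemp{iv} to get $w(u)\sim w(v)$ on $I\cup J$, and the same conversion of the constant step into a $\varphi$-step of size $\lesssim \delta t/\e$ for \ineq{osn2} (the paper simply fixes $h=\delta$ and divides by $\varphi(x)$ afterwards, which avoids your implicit equation $x+rs\varphi(x)/2=y$). The one point you wave at but would have to work for is that $I\cup J$ is far from the \emph{neighboring} zero $\widetilde z$ on the scale $\rho_n(\widetilde z)$ (not just from $z$), which the paper verifies via the inequality $\rho_n(u)^2\le 4\rho_n(v)(|u-v|+\rho_n(v))$ before \lemp{iv} can be invoked.
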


\begin{remark}
By symmetry, the statement of the lemma is also valid for $I:= [z-\e,z-\e/2]$ and $J := [z-\e-\delta,z-\e]$, where $z\in\Z$ is such that $z\neq -1$.
\end{remark}

\begin{remark}
The condition $\e <\D/2$ guarantees that $I$ is ``far'' from all other points in $\Z$. In particular,
 $[z+\e/2,z+2\e] \cap \left(\Z\cup\{\pm 1\}\right) = \emptyset$.
\end{remark}

\begin{proof}[Proof of \lem{newlem21}]
Denoting for convenience  $g := f-q$ and taking into account that  $\Delta_{h}^r (g, x, \R) = \Delta_{h}^r (f, x, \R)$  we have
 \[
 g(x+rh/2) = \Delta_{h}^r (f, x,\R) - \sum_{i=0}^{r-1}  {r \choose i} (-1)^{r-i} g(x-rh/2+ih).
 \]
We now fix  $h \in [\delta ,  \e/(2r)]$, and   note that, for any  $x$   such that $x+rh/2 \in J$, we have
 $[x-rh/2, x+(r-2)h/2] \subset I$, and so
\begin{eqnarray*}
\norm{g}{J} &\leq&  \norm{ \Delta_{h}^r (f, \cdot,\R)}{[z+\e-rh/2, z+\e+\delta - rh/2]} + (2^r-1) \norm{g}{I} \\
& \leq & \norm{ \Delta_{h}^r (f, \cdot, I\cup J)}{I\cup J} +  (2^r-1)  \norm{g}{I} .
\end{eqnarray*}
Suppose now that $0<t < 1$ is such that $\e = \rho(t, z)$, let $n\in\N$ be such that $n:= \lfloor 1/t \rfloor$ and pick $A$ so that $\e =A \rho_n(z)$.
Note that $\rho_{n+1}(z)  < \e \leq \rho_n(z)$ and  $1/4 < A \leq 1$. Hence,   $\dist\left(I\cup J, z\right) = \e/2 \geq \rho_n(z)/8$.
Suppose now that $\widetilde z \in \Z$ is such that $\widetilde z > z$ and $(z, \widetilde z) \cap \Z = \emptyset$, \ie $\widetilde z$ is the ``next'' point from $\Z$ to the right of $z$ (if there is no such $\widetilde z$ then there is nothing to do, and the next paragraph can be skipped).

We will now show that $\widetilde d := \dist\left(I\cup J, \widetilde z\right)  \geq   \rho_n(\widetilde z)/20$.
Indeed, $\widetilde d = \widetilde z - z - (\e+\delta) \geq \D -3\e/2 \geq \e/2$. If $\e > \rho_n(\widetilde z)/10$, then we are done, and so we suppose that
$\e \leq \rho_n(\widetilde z)/10$.
Recall (see \eg \cite{k-singular}*{p. 27}) the well known fact that
\be \label{usual}
\rho_n(u)^2 \leq 4 \rho_n(v) (|u-v|+\rho_n(v)) , \quad \mbox{\rm for all }\; u,v\in [-1,1] .
\ee
 This implies
\[
|\widetilde z - z|   \geq {\rho_n(\widetilde z)^2 \over 4 \rho_n(z)} - \rho_n(z)\geq {5A \over 2}  \rho_n(\widetilde z)  - {\e \over A} \geq \left( {5A \over 2} -  {1 \over 10 A}    \right) \rho_n(\widetilde z) \geq (9/40) \rho_n(\widetilde z) .
\]
Also, $|\widetilde z - z| = \widetilde d + \e+\delta \leq \widetilde d + 3\e/2 \leq 4 \widetilde d$, which implies $\widetilde d \geq (9/160) \rho_n(\widetilde z) \geq  \rho_n(\widetilde z)/20$ as needed.
Therefore, we can conclude that
\[
I\cup J \subset \I_{1/20, 1/n} .
\]
Now, using \ineq{usual} we conclude that, if $u\in I\cup J$, then $|u-z| \leq  3\e/2 \leq 3\rho_n(z)/2$, and so
 \[
 \rho_n(u)^2  \leq 4 \rho_n(z) (|u-z|+\rho_n(z)) \leq 10 \rho_n(z)^2
 \]
and
\[
\rho_n(z)^2 \leq 4 \rho_n(u) (|u-z|+\rho_n(u)) \leq 4 \rho_n(u) (3\rho_n(z)/2 +\rho_n(u)) .
\]
This  implies that, for any  $u\in I\cup J$,
\[
\rho_n(u)/4  \leq   \rho_n(z)  \leq 7 \rho_n(u) .
\]
Hence, for any $u, v \in I\cup J$,
\[
|u-v| \leq \e \leq \rho_n(z) \leq 7 \rho_n(u) .
\]
It now follows from \lemp{iv} that $w(u) \sim w(v)$, for any $u, v \in I\cup J$, and so
\[
\norm{w g}{J} \leq c \norm{ w(\cdot) \Delta_{h}^r (f, \cdot, I\cup J)}{I\cup J} + c  \norm{wg}{I} ,
\]
and \ineq{osn} is proved.

In order to prove  \ineq{osn2},
  we note that, for any $x\in I\cup J$,
\[
1-|x| \geq \e/2 = \rho (t,z)/2   \geq t^2/2  ,
\]
%
 %
 %
 which implies $\varphi(x)  \geq t/\sqrt{2}$,
and so, with $h:= \delta$, we have
\[
0< {h \over \varphi(x)}   \leq {\delta \rho_n(z) \over   \e \varphi(x)} \leq {7 \delta  \rho_n(x) \over  \e \varphi(x)}
\leq {7\delta \over \e n} \left(1 + {\sqrt{2} \over nt} \right) \leq
{7\delta t \over \e} \left(2 + 4\sqrt{2}  \right) \leq {54\delta t/\e} .
\]
Therefore, for almost all $x \in I\cup J$,   denoting  $h^* := h/\varphi(x)$ we have
\begin{eqnarray*}
w(x) \Delta_{h}^r (f, x, I\cup J) & = & w(x) \Delta_{h^* \varphi(x)}^r (f, x, I\cup J)    \\
&\leq & \sup_{0<h\leq 54\delta t/\e}  \norm{ w(\cdot) \Delta_{h \varphi(\cdot) }^r (f, \cdot, I\cup J)}{} ,
\end{eqnarray*}
and the proof of \ineq{osn2} is complete.
\end{proof}

\begin{corollary} \label{corol54}
Let $\Z\in\bZM$,  $w \in\W^*(\Z)$, $r\in\N$,   $B>0$, and let   $0<t < c_0$, 
where $c_0$ is such  that $\max_{1\leq j \leq M} \rho(c_0,z_j) \leq \D/(2B)$ (for example, $c_0 := \min\{ 1, \D/(4B)\}$ will do).
Then,
\[
\w_\varphi^r\left(f, 1,  B(1+1/(2r)) , t \right)_{ w} \leq c \w_\varphi^r(f, 1,  B , t)_{ w} ,
\]
where   the constant $c$ depends only on $r$,  $B$ and the weight $w$.
\end{corollary}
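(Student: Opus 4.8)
The plan is to reduce the estimate to a ``one zero at a time'' statement and then extend a near\nobreakdash-best polynomial approximant across a thin collar via \lem{newlem21}. Put $B':=B(1+1/(2r))$; since $B'>B$ we have $\Z_{B,t}^j\subset\Z_{B',t}^j$ for every $j$, and both sides of the asserted inequality contain the same main part term $\Omega_\varphi^r(f,1,t)_w$, so it suffices to prove, for each $1\le j\le M$,
\[
E_r(f,\Z_{B',t}^j)_w\le c\,E_r(f,\Z_{B,t}^j)_w+c\,\Omega_\varphi^r(f,1,t)_w ,
\]
and then sum over $j$ (the resulting factor $M$ depends only on $\Z$, hence on $w$). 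Fix $j$, pick $q\in\Poly_r$ with $\norm{w(f-q)}{\Z_{B,t}^j}=E_r(f,\Z_{B,t}^j)_w$, and set $\e:=B\rho(t,z_j)$. Then $\Z_{B',t}^j=\Z_{B,t}^j\cup J^-\cup J^+$, where $J^\pm$ are the two collars of length $\e/(2r)$ adjacent to the endpoints $z_j\pm\e$ of $\Z_{B,t}^j$; one collar is empty when $z_j=\pm1$, and the hypothesis $0<t<c_0$ — which forces $\e=B\rho(t,z_j)<B\rho(c_0,z_j)\le\D/2$ — rules out any truncation by a neighbouring $z_k$ or by $\pm1$ in the remaining cases. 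Thus everything reduces to estimating $\norm{w(f-q)}{J^\pm}$.

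For the right collar $J^+$ (assume $z_j\ne1$; the left collar is handled by the symmetric form of \lem{newlem21}, and only one collar occurs when $z_j=\pm1$) I would take $I:=[z_j+\e/2,z_j+\e]\subset\Z_{B,t}^j$ and $\delta:=\e/(2r)$. Since $0<\e<\D/2\le1\le\rho(1,z_j)$ there is a unique $t'\in(0,1)$ with $\rho(t',z_j)=\e$, and then $54\delta t'/\e=27t'/r$. Applying \ineq{osn2} (whose auxiliary parameter is this $t'$) and bounding $\norm{w(f-q)}{I}\le\norm{w(f-q)}{\Z_{B,t}^j}=E_r(f,\Z_{B,t}^j)_w$ yields
\[
\norm{w(f-q)}{J^+}\le c\,\Omega_\varphi^r\bigl(f,27t'/r\bigr)_{I\cup J^+,\,w}+c\,E_r(f,\Z_{B,t}^j)_w ,
\]
and symmetrically for $J^-$.

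The heart of the argument — and the step I expect to be the main obstacle — is to replace the restricted main part modulus $\Omega_\varphi^r(f,27t'/r)_{I\cup J^+,w}$ by $c\,\Omega_\varphi^r(f,1,t)_w$. Two points must be settled. First, the geometry: $I\cup J^+\subset(z_j,z_j+2\e)$, and since $2\e<\D$ this interval stays quantitatively clear of every $z_k$ and of $\pm1$; more precisely, the computation in the proof of \lem{newlem21} gives $I\cup J^+\subset\I_{1/20,1/n'}$ with $n':=\lfloor1/t'\rfloor$, which together with $1/n'\ge t'$ and the elementary monotonicity $\rho(\lambda s,z)\le\max\{1,\lambda^2\}\rho(s,z)$ shows $I\cup J^+\subset\I_{A_1,h}$ for all $0<h\le27t'/r$, with $A_1$ an absolute constant; hence $\Omega_\varphi^r(f,27t'/r)_{I\cup J^+,w}\le\Omega_\varphi^r(f,A_1,27t'/r)_w$. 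Second, the parameters: from $\rho(t',z_j)=B\rho(t,z_j)$ and the bounds $\lambda\rho(s,z)\le\rho(\lambda s,z)\le\lambda^2\rho(s,z)$ for $\lambda\ge1$ one gets $t'\le K_Bt$ with $K_B:=\max\{B,\sqrt B\}$, so $27t'/r\le(27K_B/r)t$. Then \prop{propprop}(\ref{piii}), the equivalence $\Omega_\varphi^r(f,A_1,\cdot)_w\sim\Omega_\varphi^r(f,1,\cdot)_w$ of \cor{corollary411}, and finitely many applications of the doubling estimate $\Omega_\varphi^r(f,1,2s)_w\le c\,\Omega_\varphi^r(f,1,s)_w$ (also from \cor{corollary411}) give $\Omega_\varphi^r(f,A_1,27t'/r)_w\le c\,\Omega_\varphi^r(f,1,t)_w$, with $c$ depending only on $r$, $B$ and $w$.

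Combining the three displays, $\norm{w(f-q)}{J^\pm}\le c\,\Omega_\varphi^r(f,1,t)_w+c\,E_r(f,\Z_{B,t}^j)_w$, whence $E_r(f,\Z_{B',t}^j)_w\le\norm{w(f-q)}{\Z_{B',t}^j}\le c\,\Omega_\varphi^r(f,1,t)_w+c\,E_r(f,\Z_{B,t}^j)_w$; summing over $j$ and adding $\Omega_\varphi^r(f,1,t)_w$ to both sides yields exactly $\w_\varphi^r(f,1,B',t)_w\le c\,\w_\varphi^r(f,1,B,t)_w$, which is the claim. The endpoint cases $z_j=\pm1$ are in fact easier than the generic one, since then $\Z_{B',t}^j\setminus\Z_{B,t}^j$ is a single collar and no symmetrization is needed.
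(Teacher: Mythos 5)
Your proposal is correct and follows essentially the same route as the paper: decompose $\Z_{B(1+1/(2r)),t}^j\setminus\Z_{B,t}^j$ into the two collars of length $\e_j/(2r)$, apply \ineq{osn2} of \lem{newlem21} with the parameter $\tau_j$ defined by $\rho(\tau_j,z_j)=B\rho(t,z_j)$, bound $\tau_j\leq\max\{B,\sqrt B\}\,t$, and finish with \cor{corollary411}. The only (cosmetic) difference is that you convert the restricted modulus over $I\cup J^+$ via a direct geometric containment $I\cup J^+\subset\I_{A_1,h}$, whereas the paper passes through $I_j^r\cup J_j^r\subset\I_{B/2,t}$ and Proposition~\ref{propprop}(\ref{pvi}); both amount to the same observation.
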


Taking into account that $(1+1/(2r))^m  \geq 2$ for $m = \lceil 1/\log_2(1+1/(2r))\rceil$, we
  immediately get the following result.

\begin{corollary} \label{cor2.14}
Let $\Z\in\bZM$,  $w \in\W^*(\Z)$, $r\in\N$,   $B>0$, and let   $0<t < c_0$,
where    $c_0$ is such   that $\max_{1\leq j \leq M} \rho(c_0,z_j) \leq \D/(2B)$ (for example, $c_0 := \min\{ 1, \D/(4B)\}$ will do).
Then,
\[
\w_\varphi^r\left(f, 1,  B , t \right)_{ w} \leq c \w_\varphi^r(f, 1,  B/2 , t)_{w} ,
\]
where   the constant $c$ depends only on $r$,  $B$ and the weight $w$.
\end{corollary}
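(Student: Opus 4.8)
The plan is to derive \cor{cor2.14} by iterating \cor{corol54} a fixed number of times, at each step shrinking the second parameter by the factor $1+1/(2r)$ until it falls below $B/2$, and then invoking the monotonicity recorded in \prop{propprop}(\ref{pv}). The number $m := \lceil 1/\log_2(1+1/(2r))\rceil$ in the remark preceding the statement is exactly the number of iterations needed, since $(1+1/(2r))^m \ge 2$.

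Concretely, I would put $B_k := B/(1+1/(2r))^k$ for $0 \le k \le m$, so that $B_0 = B$, the sequence $(B_k)$ is decreasing, $B_k \le B$ for every $k$, and $B_m = B/(1+1/(2r))^m \le B/2$. For each $k \in \{0,\dots,m-1\}$ I would apply \cor{corol54} with its ``$B$'' taken to be $B_{k+1}$; since $B_{k+1}(1+1/(2r)) = B_k$, this produces, for all $0<t<c_0$,
\[
\w_\varphi^r(f,1,B_k,t)_w \;\le\; c_k\,\w_\varphi^r(f,1,B_{k+1},t)_w ,
\]
with $c_k$ depending only on $r$, $w$ and $B_{k+1}$, hence — as $B_{k+1}$ is determined by $r$ and $B$ alone — only on $r$, $B$ and $w$. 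Chaining these $m$ inequalities and then applying \prop{propprop}(\ref{pv}) together with $B_m \le B/2$ gives
\[
\w_\varphi^r(f,1,B,t)_w \;\le\; \Big(\textstyle\prod_{k=0}^{m-1}c_k\Big)\,\w_\varphi^r(f,1,B_m,t)_w \;\le\; \Big(\textstyle\prod_{k=0}^{m-1}c_k\Big)\,\w_\varphi^r(f,1,B/2,t)_w ,
\]
which is the assertion with $c := \prod_{k=0}^{m-1}c_k$; this is a legitimate constant because $m$ depends only on $r$, so only finitely many factors occur, each depending only on $r$, $B$ and $w$.

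There is no substantive obstacle here; the one point requiring care is the \emph{direction} of the iteration. One must chain \cor{corol54} downward in the parameter (from $B$ toward $B/2$), applying it with the smaller value $B_{k+1}$ at each step, because the admissible range $0<t<c_0$ for that corollary depends on its parameter through the constraint $\max_{1\le j\le M}\rho(c_0,z_j)\le \D/(2\,\cdot\,)$ and hence shrinks as the parameter grows. Since $B_{k+1}\le B$, the hypothesis $\max_{1\le j\le M}\rho(c_0,z_j)\le\D/(2B)$ assumed in \cor{cor2.14} already forces $\max_{1\le j\le M}\rho(c_0,z_j)\le\D/(2B_{k+1})$, so the \emph{single} threshold $c_0$ from the statement is admissible at every step of the downward iteration. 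Running the iteration upward (from $B/2$ toward $B$) would instead require the parameter to pass through values exceeding $B$ and would therefore force a smaller, $k$-dependent range of $t$; this is the only place where a naive approach would go wrong.
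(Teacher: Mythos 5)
Your proof is correct and is essentially the paper's own argument: the paper obtains \cor{cor2.14} precisely by iterating \cor{corol54} $m = \lceil 1/\log_2(1+1/(2r))\rceil$ times and using that $(1+1/(2r))^m \geq 2$, together with the monotonicity in the second parameter. Your additional care about the direction of the iteration and the uniform admissibility of the single threshold $c_0$ is a correct (and worthwhile) elaboration of what the paper leaves implicit.
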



\begin{proof}[Proof of \cor{corol54}]
%
%
For each $1\leq j \leq M$, let $\e_j :=   B \rho(t,z_j)$ and note that $\e_j < \D/2$. It follows from \lem{newlem21} and the remark after it   that, for any $q_j \in \Poly_r$,
 $ \delta_j :=  \e_j/(2r) $ and $\tau_j$ such that $\rho(\tau_j, z_j) = B \rho(t,z_j) = \e_j$, we have
 \[
\norm{w(f-q_j)}{J_j^r}
 \leq   c \Omega_\varphi^r(f, 27  \tau_j/r)_{I_j^r\cup J_j^r, w}  + c \norm{w(f-q_j)}{I_j^r} ,
\]
where
$I_j^r := [z_j+\e_j/2,z_j+\e_j]$ and  $J_j^r := [z_j+\e_j,z_j+\e_j+\delta_j]$, and
\[
\norm{w(f-q_j)}{J_j^l}
 \leq   c \Omega_\varphi^r(f, 27  \tau_j/r)_{I_j^l\cup J_j^l, w} + c \norm{w(f-q_j)}{I_j^l} .
\]
where
$I_j^l := [z_j-\e_j, z_j-\e_j/2]$ and  $J_j^l := [z_j-\e_j-\delta_j, z_j-\e_j]$.
Note that, if $z_j = 1$ or $-1$, we do not consider $I_j^r$, $J_j^r$ or $I_j^l$, $J_j^l$, respectively.

We now note that $[z_j-\e_j, z_j+\e_j] \cap [-1,1] = \Z_{B, t}^j$ and
$[z_j-\e_j-\delta_j, z_j+\e_j+\delta_j] \cap [-1,1] = \Z_{\widetilde B, t}^j$, where $\widetilde B := B(1 +1/(2r))$.
Letting $q_j\in \Poly_r$ be such that
 \[
 \norm{w(f-q_j)}{\Z_{B,t}^j} \leq c  E_r(f, \Z_{B,t}^j)_{w} ,
 \]
we have
 \begin{eqnarray*}
E_r(f, \Z_{\widetilde B, t}^j)_{w} & \leq &    \norm{w(f-q_j)}{\Z_{\widetilde B, t}^j} \\
&\leq &
  \norm{w(f-q_j)}{J_j^l} +   \norm{w(f-q_j)}{\Z_{ B, t}^j} +   \norm{w(f-q_j)}{J_j^r} \\
& \leq &
 c \norm{w(f-q_j)}{\Z_{ B, t}^j} +  c \Omega_\varphi^r(f, 27 \tau_j/r)_{I_j^l\cup J_j^l, w}
 + c   \Omega_\varphi^r(f, 27 \tau_j/r)_{I_j^r\cup J_j^r, w} \\
 & \leq &
 c E_r(f, \Z_{B,t}^j)_{ w} + c \Omega_\varphi^r(f, 27  \tau_j/r)_{I_j^l\cup J_j^l, w}
 + c   \Omega_\varphi^r(f, 27 \tau_j/r)_{I_j^r\cup J_j^r, w}  .
 \end{eqnarray*}
Now, note that
\[
I_j^r\cup J_j^r \subset \I_{B/2,t} \andd I_j^l\cup J_j^l \subset \I_{B/2,t} .
\]

Hence, taking into account that $\tau_j \leq \max\{B, \sqrt{B} \} t$ we get
 \begin{eqnarray*}
E_r(f, \Z_{\widetilde B, t}^j)_{w} &\leq&  c E_r(f,\Z_{B,t}^j)_{ w} +
c \Omega_\varphi^r(f, 27  \tau_j/r)_{\I_{B/2,t}, w}  \\
&\leq& c E_r(f,\Z_{B,t}^j)_{w} +
c \Omega_\varphi^r \left(f, 27 \max\{B, \sqrt{B} \} t/r\right)_{\I_{B/2,t}, w} .
 \end{eqnarray*}
Now, with $c_* :=  27 \max\{B, \sqrt{B} \}/r$,
Proposition~\ref{propprop}(\ref{pvi}) and \cor{corollary411} imply
\[
\Omega_\varphi^r \left(f, c_*  t\right)_{\I_{B/2,t}, w} \leq
\Omega_\varphi^r  \left( f, B/(2\max\{c_*, c_*^2\}), c_* t\right)_{ w}
\leq c \Omega_\varphi^r  \left( f, 1, t\right)_{w} .
\]
Therefore,
 \begin{eqnarray*}
 \w_\varphi^r(f, 1, \widetilde B, t)_{ w}  &=&   \Omega_\varphi^r(f, 1, t)_{ w}  + \sum_{j=1}^M  E_r(f, \Z_{\widetilde B,t}^j)_{ w  }  \\
 & \leq & c \Omega_\varphi^r(f, 1, t)_{ w} +  c \sum_{j=1}^M  E_r(f, \Z_{B,t}^j)_{w}  \\
 & \leq & c  \w_\varphi^r(f, 1,  B, t)_{w} ,
 \end{eqnarray*}
and the proof is complete.
\end{proof}

\sect{Auxiliary results}

\begin{theorem}[\mbox{\cite[(6.10)]{mt2000}}] \label{thmremez}
Let $W$ be a $2\pi$-periodic function which is an \astar{} weight on $[0, 2\pi]$. Then there is a constant $C>0$  such that if $T_n$ is a trigonometric polynomial of degree at most $n$ and $E$ is a measurable subset of $[0, 2\pi]$ of measure at most $\Lambda/n$, $1\leq \Lambda\leq n$, then
\[
\norm{ T_n W}{[0, 2\pi]}  \leq C^{\Lambda}   \norm{ T_n W }{[0, 2\pi]\setminus E} .
\]
\end{theorem}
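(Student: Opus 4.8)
Since the statement is quoted from the literature, I indicate only the idea of a proof. The plan is to deduce it from the classical (unweighted) Remez inequality for trigonometric polynomials, using the two structural facts about $W$ recalled above: the defining estimate \ineq{astar}, and the fact that $W$, being an $A^*$ weight, is also an $A_\infty$ weight with constants depending only on $L^*$.

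First I would record a Marcinkiewicz--Zygmund-type upper bound. Partition $[0,2\pi]$ into $N\sim n$ arcs $I_1,\dots,I_N$ of equal length $|I_j|\sim 1/n$. By \ineq{astar} we have $W(x)\le L^*W(I_j)/|I_j|$ for every $j$ and every $x\in I_j$, hence
\[
\norm{T_nW}{[0,2\pi]}=\max_{1\le j\le N}\norm{T_nW}{I_j}\le\max_{1\le j\le N}\frac{L^*}{|I_j|}\,W(I_j)\,\norm{T_n}{I_j}=:M_0 ,
\]
and I fix an index $j^*$ attaining this maximum.

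Next I would pass to an enlarged arc $\widehat I\supset I_{j^*}$ of length $\ell:=\min\{2\pi,\,c_1\Lambda/n\}$, where $c_1$ is a large absolute constant chosen so that $|E|\le\Lambda/n$ is only a small fraction of $|\widehat I|$; note also that $|\widehat I|/|I_{j^*}|\le c_2\Lambda$. Since $W\in A_\infty$ with constants depending only on $L^*$, there is $\varepsilon_0=\varepsilon_0(L^*)>0$ such that the set $\widehat G:=\{x\in\widehat I:\ W(x)\ge\varepsilon_0 W(\widehat I)/|\widehat I|\}$ occupies all but a small fraction of $\widehat I$; consequently $\widehat I\setminus(\widehat G\setminus E)$ has relative measure in $\widehat I$ bounded away from $1$ by an absolute constant. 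Applying a subarc version of the Remez inequality to $T_n$ on $\widehat I$---whose ``localized degree'' there is only $\sim n\ell\sim\Lambda$---with this exceptional set yields $\norm{T_n}{\widehat I}\le C_1^{\Lambda}\norm{T_n}{\widehat G\setminus E}$, and therefore $\norm{T_n}{\widehat G\setminus E}\ge C_1^{-\Lambda}\norm{T_n}{I_{j^*}}$.

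Combining this with the bound $W\ge\varepsilon_0 W(\widehat I)/|\widehat I|\ge\varepsilon_0 W(I_{j^*})/|\widehat I|\ge\bigl(\varepsilon_0/(c_2\Lambda)\bigr)W(I_{j^*})/|I_{j^*}|$, valid on $\widehat G\setminus E\subset[0,2\pi]\setminus E$, gives
\[
\norm{T_nW}{[0,2\pi]\setminus E}\ \ge\ \norm{T_nW}{\widehat G\setminus E}\ \ge\ \frac{\varepsilon_0}{c_2\Lambda}\cdot\frac{W(I_{j^*})}{|I_{j^*}|}\cdot C_1^{-\Lambda}\norm{T_n}{I_{j^*}}\ =\ \frac{\varepsilon_0}{c_2 L^*\Lambda}\,C_1^{-\Lambda}\,M_0 .
\]
Since $\Lambda\ge 1$, the factor $1/\Lambda$ together with the remaining fixed constants is absorbed into a slightly larger geometric factor (using $\Lambda\le e^{\Lambda}$), which produces $M_0\le C^{\Lambda}\norm{T_nW}{[0,2\pi]\setminus E}$ with $C$ depending only on $L^*$, as required. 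The step I expect to be the real obstacle is the possibility that $W$ nearly vanishes on a large part of $\widehat I$ (for instance near a zero of $W$): one cannot then select a single good point and must instead carry the lower bound $W\gtrsim W(\widehat I)/|\widehat I|$, furnished by the $A_\infty$ property, through the Remez estimate. This is also what forces the Remez inequality to be used on the short subarc $\widehat I$, with the degree localized to $\sim\Lambda$, rather than on the full period---the only information produced about $T_n$ being confined to $\widehat I$---and keeping honest track of the resulting geometric factor $C^{\Lambda}$ is where most of the care is needed.
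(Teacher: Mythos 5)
The paper itself offers no proof of this statement: it is quoted verbatim from Mastroianni--Totik \cite{mt2000}*{(6.10)}, so your sketch can only be judged on its own terms. The outer layers of your plan are fine: the bound $\norm{T_nW}{[0,2\pi]}\le\max_j (L^*/|I_j|)\,W(I_j)\norm{T_n}{I_j}$, the choice of the enlarged arc $\widehat I$ of length $\sim\Lambda/n$, the $A_\infty$ argument showing that $\widehat G:=\{x\in\widehat I:\ W(x)\ge\varepsilon_0 W(\widehat I)/|\widehat I|\}$ fills most of $\widehat I$, and the final bookkeeping that absorbs the factor $\Lambda$ into $C^{\Lambda}$ are all correct. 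The gap is the central step: the ``subarc version of the Remez inequality with localized degree $\sim n\ell\sim\Lambda$'' that you apply on $\widehat I$ does not exist, and the inequality you extract from it, $\norm{T_n}{\widehat I}\le C_1^{\Lambda}\norm{T_n}{\widehat G\setminus E}$, is false as a general statement.

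Concretely, take $T_n(x)=\bigl((1-\cos x)/2\bigr)^n=\sin^{2n}(x/2)$, a trigonometric polynomial of degree $n$, and $\widehat I=[0,\ell]$ with $\ell=c_1\Lambda/n$. Since $|E|$ may be as large as $\Lambda/n=|\widehat I|/c_1$, the adversary may place $E$ at the right end of $\widehat I$; with $W\equiv 1$ (so $\widehat G=\widehat I$) one gets $\sup_{\widehat I}|T_n|\big/\sup_{\widehat I\setminus E}|T_n|\approx(1-1/c_1)^{-2n}=e^{cn}$, which dwarfs $C^{\Lambda}$ whenever $\Lambda\ll n$. Thus on a short arc the correct worst-case Remez exponent for an exceptional set occupying a fixed fraction of the arc is of order $n$, not $n|\widehat I|$: a degree-$n$ trigonometric polynomial is not ``of degree $n\ell$'' on an arc of length $\ell$ (think of a zero of order $2n$ in or near the arc), which is exactly the situation created by weights vanishing at a point. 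The only thing that could exclude this configuration in your chain is information you never use at that step: that $I_{j^*}$ maximizes $(W(I_j)/|I_j|)\norm{T_n}{I_j}$ globally, together with the fact that for an \astar{} weight the local averages $W(I_j)/|I_j|$ at scale $1/n$ vary at most polynomially in $n$ over the whole period. Any repair must couple these global facts with the possible exponential variation of $T_n$, i.e.\ it becomes a genuinely different (global) argument, in the spirit of the cited source, which works with the slowly varying majorant $W_n$ and the classical unweighted trigonometric Remez inequality on the full period rather than with a localized Remez inequality. As written, the key step is unjustified and the proposed proof does not go through.
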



The following result is essentially proved in \cite{mt2000}. However, since it was not stated there explicitly we sketch its very short proof below.

\begin{corollary} \label{cordw}
 Let $w\in\AW_{L^*}$.
If $E \subset [-1,1]$ is such that $ \int_E (1-x^2)^{-1/2} dx   \leq \lambda/n$ with $\lambda\leq n/2$, 
then for each $P_n\in\Pn$, we have
\[
\norm{P_n w}{[-1,1]} \leq c \norm{P_n w}{[-1,1]\setminus E} ,
\]
where the constant $c$ depends only on $\lambda$  and $L^*$.
\end{corollary}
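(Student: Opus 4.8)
The plan is to transfer \thm{thmremez} from the trigonometric setting on $[0,2\pi]$ to the algebraic setting on $[-1,1]$ via the standard substitution $x = \cos\theta$. First I would set $W(\theta) := w(\cos\theta)$ and observe that $W$ is a $2\pi$-periodic even function; the key preliminary point is that $W$ is an \astar{} weight on $[0,2\pi]$ with \astar{} constant depending only on $L^*$. This is the kind of fact that is ``essentially proved in \cite{mt2000}'': the change of variables $x=\cos\theta$ distorts lengths of intervals near the endpoints $\theta = 0, \pi$ by the factor $\varphi(\cos\theta)=|\sin\theta|$, but for the purpose of the defining inequality \ineq{astar} (with $I$ an interval in $\theta$ and $\vartheta\in I$), one checks that $\int_I w(\cos\theta)\,d\theta$ compares with $|I|\,w(\cos\vartheta)$ using \ineq{astar} for $w$ together with the elementary comparison of $|\sin\theta|$ across $I$; equivalently one can cite \cite{mt2000} directly since $A^*$ being preserved under $x=\cos\theta$ is recorded there.

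Next I would take $P_n\in\Pn$, i.e. a polynomial of degree $\le n-1$, and set $T_n(\theta) := P_n(\cos\theta)$, which is an even trigonometric polynomial of degree $\le n-1 \le n$. I would pull back the set $E$: let $\widetilde E := \{\theta\in[0,2\pi] : \cos\theta \in E\}$. The measure condition converts cleanly because $d\theta = (1-x^2)^{-1/2}\,dx$ under $x=\cos\theta$ on $(0,\pi)$ and symmetrically on $(\pi,2\pi)$, so $\meas(\widetilde E) = 2\int_E (1-x^2)^{-1/2}\,dx \le 2\lambda/n =: \Lambda/n$ with $\Lambda := 2\lambda \le n$, which is exactly the hypothesis range $1\le\Lambda\le n$ of \thm{thmremez} (modulo the trivial case $\Lambda<1$, i.e. $\meas(\widetilde E)<1/n$, which is absorbed by enlarging $\widetilde E$ harmlessly, or by noting the conclusion is then even easier). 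Applying \thm{thmremez} gives $\norm{T_n W}{[0,2\pi]} \le C^\Lambda \norm{T_n W}{[0,2\pi]\setminus\widetilde E}$.

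Finally I would translate both sides back. Since $\theta\mapsto\cos\theta$ maps $[0,\pi]$ onto $[-1,1]$ and $\norm{T_n W}{[0,2\pi]} = \norm{T_n W}{[0,\pi]}$ by evenness, the left side equals $\norm{P_n w}{[-1,1]}$ (the essential supremum is unaffected by the change of variables since it is a homeomorphism of $(0,\pi)$ onto $(-1,1)$ and the endpoints have measure zero). Similarly $[0,2\pi]\setminus\widetilde E$ restricted to $[0,\pi]$ is $[0,\pi]\setminus\widetilde E$, whose image under $\cos$ is $[-1,1]\setminus E$, so the right side equals $\norm{P_n w}{[-1,1]\setminus E}$. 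Hence $\norm{P_n w}{[-1,1]} \le C^{2\lambda}\norm{P_n w}{[-1,1]\setminus E}$, and $c := C^{2\lambda}$ depends only on $\lambda$ and on $C$, which in turn depends only on the \astar{} constant of $W$, hence only on $L^*$. The main obstacle is the first step: being careful that $W(\theta)=w(\cos\theta)$ genuinely inherits the \astar{} property with a constant controlled by $L^*$ alone, and handling the measure-zero endpoint set $\{\theta=0,\pi\}$ and the degenerate case $\meas(\widetilde E)<1/n$ cleanly; everything else is bookkeeping with the substitution.
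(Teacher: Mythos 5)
Your proposal is correct and follows essentially the same route as the paper: substitute $x=\cos\theta$, note that $W(\theta)=w(\cos\theta)$ is a $2\pi$-periodic \astar{} weight (by \cite{mt2000}), check that $\meas(\widetilde E)=2\int_E(1-x^2)^{-1/2}\,dx\leq 2\lambda/n$, and apply \thm{thmremez} with $\Lambda=2\lambda\leq n$ before translating back. Your extra remarks (why $\lambda\leq n/2$ is needed, the degenerate case $\Lambda<1$, and the measure-zero endpoints) are just careful bookkeeping of details the paper leaves implicit.
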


\begin{proof}
Let $W(t) := w(\cos t)$,  $T_n(t) := P_n(\cos t)$ and $\widetilde E:= \left\{ 0\leq t \leq 2\pi \st \cos t \in E\right\}$. Note that   $W$ is a $2\pi$-periodic function which is an \astar{} weight on $[0, 2\pi]$ (see \cite[p. 68]{mt2000}), and
\[
\meas(\widetilde E) = \int_{\widetilde E} dt = 2 \int_{\widetilde E \cap [0, \pi]} dt = 2 \int_E  (1-x^2)^{-1/2} dx   \leq 2\lambda/n   .
\]
Hence,
\begin{eqnarray*}
\norm{P_n w}{} & = & \norm{T_n W}{[0,2\pi]} \leq c \norm{ T_n W }{[0, 2\pi]\setminus \widetilde E}
= c \norm{P_n w}{[-1,1]\setminus E}.
\end{eqnarray*}
\end{proof}

\begin{lemma}[\mbox{\cite[(7.27)]{mt2000}}] \label{auxlemma}
 Let $w$ be an \astar{} weight on $[-1,1]$.  Then, for all $n\in\N$ and  $P_n\in\Poly_n$,
\[
 \norm{P_n w}{} \sim \norm{P_n w_n}{}
\]
with the equivalence constants independent   of $P_n$ and $n$.
\end{lemma}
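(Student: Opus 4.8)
The plan is the following. One of the two inequalities is immediate: by \ineq{wlesswn} we have $w(x)\le L^*w_n(x)$ for every $x\in[-1,1]$, whence $\norm{P_n w}{}\le L^*\norm{P_n w_n}{}$ for all $n$ and all $P_n\in\Poly_n$; thus only the reverse estimate $\norm{P_n w_n}{}\le c\norm{P_n w}{}$ has to be proved. This reverse estimate is precisely \cite[(7.27)]{mt2000}, and I would obtain it from the Remez-type inequality \thm{thmremez} along the following lines.

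Fix $n$ and $P_n$, and choose $x_0\in[-1,1]$ with $|P_n(x_0)|w_n(x_0)\ge\tfrac12\norm{P_n w_n}{}$. Put $\delta:=\rho_n(x_0)$ and $I:=[x_0-\delta,x_0+\delta]\cap[-1,1]$, so that $\delta\le|I|\le2\delta$ and, by the definition of $w_n$, $w_n(x_0)=w(I)/\delta$. First I would record two facts on which everything rests: (a) $w_n(u)\sim w_n(x_0)$ for all $u\in I$ — this is the property ``$w_n(x)\sim w_n(u)$ whenever $|x-u|\le\K\rho_n(x)$'' already used in the proof of \lem{lemma26} — and hence, by \ineq{wlesswn}, $w(u)\le L^*w_n(u)\le c\,w_n(x_0)$ throughout $I$; and (b) since $x_0$ nearly maximizes $|P_n|w_n$ over $[-1,1]$ while $w_n$ is comparable to $w_n(x_0)$ on all of $I$, the number $|P_n(x_0)|$ is, up to an absolute factor, the maximum of $|P_n|$ on $I$.

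Next I would split $I$ into the ``good'' set $G:=\{u\in I:\ |P_n(u)|\ge\tfrac12|P_n(x_0)|\}$ and its complement, and reduce the lemma to the estimate $|G|\ge\theta|I|$ with $\theta>0$ absolute. Indeed, granting this: every \astar{} weight is an $A_\infty$ weight, so $|G|\ge\theta|I|$ forces $w(G)\ge\beta\,w(I)$ for some $\beta=\beta(\theta,w)>0$; consequently $\esssup_{u\in G}w(u)\ge w(G)/|G|\ge\beta\,w(I)/|I|\ge(\beta/2)\,w_n(x_0)$, so there is a set of positive measure of points $u^{*}\in G$ with $w(u^{*})\ge(\beta/4)\,w_n(x_0)$; for such a $u^{*}$, $\norm{P_n w}{}\ge|P_n(u^{*})|\,w(u^{*})\ge\tfrac12|P_n(x_0)|\cdot(\beta/4)w_n(x_0)\ge c\,\norm{P_n w_n}{}$, which is what is needed.

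The remaining — and, I expect, hardest — step is the estimate $|G|\ge\theta|I|$: a polynomial of degree $<n$ cannot drop below half of its maximum on more than a fixed proportion of an interval of length $\asymp\rho_n(x_0)$ about a point of that interval. I would establish it by passing to the circle via $u=\cos t$: the arc corresponding to $I$ has length $\asymp1/n$ (one checks this separately according as $\varphi(x_0)\gtrsim1/n$, where $\rho_n(x_0)\asymp\varphi(x_0)/n$ and $1-u^2\asymp\varphi(x_0)^2$ on $I$, or $\varphi(x_0)\lesssim1/n$, where $I$ lies within $O(1/n^2)$ of an endpoint and the arcsine singularity compensates), so the trigonometric polynomial $T_n(t):=P_n(\cos t)$ of degree $<n$ is effectively of bounded degree on that arc — concretely, iterating Bernstein's inequality shows that in the rescaled variable $s:=nt$ all derivatives of $T_n$ on the arc are bounded by $\norm{T_n}{[0,2\pi]}$ — and from this, together with fact (b) and \thm{thmremez} (invoked to exclude degenerate, highly concentrated shapes of $|P_n|$ on $I$), the fixed-proportion bound follows by an elementary Taylor-expansion/compactness argument. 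This localization of a Remez-type phenomenon to a single interval of length $\asymp\rho_n$ is where the real work lies; the rest is bookkeeping. For a fully detailed proof one may simply invoke \cite[(7.27)]{mt2000}, where exactly this reasoning is carried out on the circle.
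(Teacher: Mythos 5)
The paper offers no proof of this lemma at all --- it is imported verbatim from \cite[(7.27)]{mt2000} --- so there is no internal argument to compare yours with; falling back on the citation, as you do at the end, is exactly what the paper does. Within your attempted proof, the direction $\norm{P_nw}{}\le L^*\norm{P_nw_n}{}$ via \ineq{wlesswn} is correct (and is the only place the \astar{} hypothesis is genuinely needed), and the reduction of the converse direction to the measure estimate $|G|\ge\theta|I|$, using the $A_\infty$ property and $\esssup_{G}w\ge w(G)/|G|\ge \beta w(I)/|I|$, is sound.

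The gap is exactly where you locate it, and the sketch you offer would not close it. Your Bernstein/Taylor argument on the circle bounds the rescaled derivatives of $T_n$ by the \emph{global} norm $\norm{T_n}{[0,2\pi]}=\norm{P_n}{}$, whereas fact (b) only controls $\max_I|P_n|\sim|P_n(x_0)|$; these two quantities need not be comparable. The extremality of $x_0$ for $|P_n|w_n$ combined with the doubling lower bound $w_n(u)\ge c\,n^{-s}w_n(x_0)$ yields only $\norm{P_n}{}\le Cn^{s}|P_n(x_0)|$ with $s$ determined by the doubling constant, so the Taylor expansion gives $|G|\gtrsim n^{-s}|I|$, not a fixed proportion, and the normal-families/compactness argument fails because the rescaled family is not uniformly bounded. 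Moreover $\theta$ cannot be absolute: examples of the form $P_n(x)=(1-ax^2)^m$ with $a\sim n^2$, paired with suitable doubling weights, make $|G|/|I|$ as small as a negative power of the doubling constant, so any true version of the claim must let $\theta$ depend on $w$; and \thm{thmremez}, which removes sets of small measure from a \emph{global weighted} norm, does not supply the missing local information. A route that avoids all of this: for $u\in I_x^{\e}:=[x-\e\rho_n(x),x+\e\rho_n(x)]$ write $|P_n(x)|\le|P_n(u)|+\e\rho_n(x)\sup_{I_x^{\e}}|P_n'|$, average in $u$ against $w(u)\,du/w(I_x^{\e})$, multiply by $w_n(x)=w(I_x)/\rho_n(x)$, and use doubling (which gives $w(I_x)\le C\e^{-s}w(I_x^{\e})$), the local equivalences $w_n\sim w_n(x)$, $\rho_n\sim\rho_n(x)$ on $I_x$, and the Bernstein inequality $\norm{w_n\rho_nP_n'}{}\le c\norm{w_nP_n}{}$ underlying \thm{thm5.5}; this yields $\norm{P_nw_n}{}\le C\e^{-s}\norm{P_nw}{}+c\,\e\norm{P_nw_n}{}$, and choosing $\e$ small completes the hard direction using only tools already present in the paper.
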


It is convenient to denote $\varphi_n(x) := \varphi(x) + 1/n$, $n\in\N$, and note that   $w := \varphi$ is an \astar{} weight and $w_n \sim \varphi_n$ on $[-1,1]$.

One of the applications of \cor{cordw} is the following quite useful result.

\begin{theorem} \label{mainauxthm}
 Let $w$ be a \astar{} weight,  $n\in\N$, $0\leq \mu \leq n$. Then, for any  $P_n \in \Poly_n$,
\be \label{mainauxineq1}
 \norm{w \varphi^\mu   P_n}{} \sim \norm{w_n \varphi^\mu  P_n}{}
\ee
and
\be \label{mainauxineq2}
\norm{w \lambda_n^\mu  P_n}{}  \sim \norm{w_n \lambda_n^\mu  P_n}{}    ,
\ee
where $\lambda_n(x) := \max\left\{  \sqrt{1-x^2} , 1/n \right\}$,
and
the equivalence constants are independent of $\mu$, $n$ and $P_n$.
\end{theorem}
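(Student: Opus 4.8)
The plan is to reduce both equivalences to Lemma~\ref{auxlemma} (the case $\mu=0$) by absorbing the factors $\varphi^\mu$ and $\lambda_n^\mu$ into the weight. The natural idea is: $w\varphi^\mu$ is again an \astar{} weight (by \rem{remark25}), so Lemma~\ref{auxlemma} applied to it gives $\norm{w\varphi^\mu P_n}{} \sim \norm{(w\varphi^\mu)_n P_n}{}$. The task then becomes to identify $(w\varphi^\mu)_n$ with $w_n\varphi^\mu$ (for \ineq{mainauxineq1}) or with $w_n\lambda_n^\mu$ (for \ineq{mainauxineq2}), up to equivalence constants independent of $\mu$, $n$, $P_n$.

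First I would record the pointwise comparisons $\varphi_n \sim \lambda_n$ on $[-1,1]$ and, more importantly, the local near-constancy of $\varphi_n$: if $|x-u|\le \K\rho_n(x)$ then $\varphi_n(x)\sim \varphi_n(u)$ with absolute constants (this is the same mechanism used for $w_n$ in the proof of \lem{lemma26}). Using this, for the averaged weight I would estimate, for $x\in[-1,1]$,
\[
(w\varphi^\mu)_n(x) = \rho_n(x)^{-1}\int_{x-\rho_n(x)}^{x+\rho_n(x)} w(u)\varphi(u)^\mu\,du .
\]
On the interval of integration $\varphi(u)^\mu \le \varphi_n(u)^\mu \sim \varphi_n(x)^\mu$, which yields $(w\varphi^\mu)_n(x)\le c\,\varphi_n(x)^\mu w_n(x)$. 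For the reverse inequality one has to be slightly careful near $\pm1$ where $\varphi$ vanishes but $\varphi_n$ does not; there one uses that $\int_{x-\rho_n(x)}^{x+\rho_n(x)}\varphi(u)^\mu w(u)\,du \ge c\,\varphi_n(x)^\mu w_n(x)$ because on a fixed proportion of the integration interval $\varphi(u)\ge c\varphi_n(x)$ (standard for $\rho_n$), together with the doubling property of $w$ to transfer the $w$-mass onto that sub-interval. This gives $(w\varphi^\mu)_n \sim \varphi_n^\mu w_n$ with constants depending only on $L^*$ (crucially \emph{not} on $\mu$, since the estimates $\varphi_n(u)^\mu\sim\varphi_n(x)^\mu$ come from a constant ratio $\varphi_n(u)/\varphi_n(x)$ raised to the power $\mu$, which stays bounded as long as $0\le\mu\le n$ — actually even for all $\mu\ge0$; the restriction $\mu\le n$ is what is needed to apply \rem{remark25}/\lem{lemma02} and Lemma~\ref{auxlemma} to $w\varphi^\mu$ with uniform constants).

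Then I would assemble the proof: by \rem{remark25} the function $w\varphi^\mu\in\AW_L$ with $L$ independent of $\mu$ for $0\le\mu\le n$, so Lemma~\ref{auxlemma} gives $\norm{w\varphi^\mu P_n}{}\sim \norm{(w\varphi^\mu)_n P_n}{}$, and by the previous paragraph $(w\varphi^\mu)_n\sim w_n\varphi_n^\mu$, hence $\norm{w\varphi^\mu P_n}{}\sim\norm{w_n\varphi_n^\mu P_n}{}$. Finally $\varphi_n\sim\lambda_n$ pointwise gives $\norm{w_n\varphi_n^\mu P_n}{}\sim\norm{w_n\lambda_n^\mu P_n}{}$, proving \ineq{mainauxineq2}; for \ineq{mainauxineq1} I note that $w\varphi^\mu\le w\varphi_n^\mu$ trivially and $\norm{w\varphi_n^\mu P_n}{}\sim\norm{w_n\varphi_n^\mu P_n}{}$ by the same argument applied with $\lambda_n$-type weights, while $w_n\varphi^\mu\le w_n\varphi_n^\mu$; combining with an application of \cor{cordw} on the complement of a small neighborhood of $\pm1$ (where $\varphi$ and $\varphi_n$ differ) transfers the norm on $\varphi_n$ back to $\varphi$. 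Concretely, $\norm{w\varphi^\mu P_n}{}\ge c\norm{w\varphi^\mu P_n}{[-1+n^{-2},1-n^{-2}]}\ge c\norm{w\varphi_n^\mu P_n}{[-1+n^{-2},1-n^{-2}]}$ since $\varphi\sim\varphi_n$ there, and then \cor{cordw} (with $\lambda$ absolute, applied to the \astar{} weight $w\varphi_n^\mu$ — or rather to $w$ after noting $\varphi_n^\mu P_n$ has controlled growth) recovers the full norm.

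\textbf{The main obstacle} I anticipate is the behavior near the endpoints $\pm1$: that is exactly where $\varphi^\mu$ and $\varphi_n^\mu$ are genuinely different (one vanishes, the other is $\sim n^{-\mu}$), so the equivalence \ineq{mainauxineq1} cannot be purely pointwise and must use that $P_nw$ is a polynomial times an \astar{} weight, invoking the Remez-type \cor{cordw} to show the sup-norm of $w\varphi^\mu P_n$ is not concentrated in the $O(n^{-2})$ endpoint layers. Making the constant in this step uniform in $\mu$ requires observing that $\varphi_n^\mu/\varphi^\mu$ is bounded by an absolute constant off those layers, so no $\mu$-dependence leaks in. The averaging estimate for $(w\varphi^\mu)_n$ is routine once the local constancy of $\varphi_n$ and the doubling of $w$ are in hand.
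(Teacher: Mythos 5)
Your overall strategy (absorb the power of $\varphi$ into the weight, apply \lem{auxlemma} to $w\varphi^\mu$, identify $(w\varphi^\mu)_n$ with $w_n\varphi_n^\mu$, then use \cor{cordw} to pass between $\varphi$ and $\varphi_n$ away from the endpoint layers) is sound only for \emph{bounded} $\mu$, and it breaks exactly on the point the theorem is about: uniformity of the constants in $\mu$ for $0\le\mu\le n$. The recurring error is the claim that a bounded ratio stays bounded after being raised to the power $\mu$. If $\varphi_n(u)/\varphi_n(x)\le C$ with $C>1$ (which is all the local near-constancy of $\varphi_n$ gives), then $\varphi_n(u)^\mu/\varphi_n(x)^\mu\le C^\mu$, which is exponential in $\mu$ and hence can be as large as $C^n$; it does not "stay bounded ... even for all $\mu\ge0$" as you assert. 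The same problem occurs in your final step: on $[-1+n^{-2},1-n^{-2}]$ one has $\varphi\le\varphi_n\le 2\varphi$, so $\varphi_n^\mu/\varphi^\mu\le 2^\mu$, not an absolute constant. In addition, the \astar{} constant of $w\varphi^\mu$ genuinely grows with $\mu$ (already for $w\equiv1$ the \astar{} constant of $\varphi^\mu$ on an interval ending at $1$ is of order $\mu$), so \lem{auxlemma} applied to $w\varphi^\mu$ yields equivalence constants depending on $\mu$; the hypothesis $\mu\le n$ does not repair this, contrary to your parenthetical remark.

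The missing idea — and the paper's actual route — is to split $\varphi^\mu=\varphi^m\cdot\varphi^\eta$ with $m:=2\lfloor\mu/2\rfloor$ even and $\eta:=\mu-m\in[0,2)$. Since $m$ is even, $\varphi^m=(1-x^2)^{m/2}$ is a polynomial of degree $m$, so $Q_{n+m}:=\varphi^mP_n\in\Poly_{n+m}\subset\Poly_{2n}$ (this is where $\mu\le n$ is used: to control the degree, not the weight constants). All the weight machinery — $(w\varphi^\eta)_n\sim w_n\varphi_n^\eta$, the Remez-type restriction to $E_n=[-1+n^{-2},1-n^{-2}]$ via \cor{cordw}, and the comparison $\varphi_n^\eta\sim\varphi^\eta$ on $E_n$ — is then applied only to the residual exponent $\eta$, whose ceiling is at most $2$, so every constant is absolute. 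Your argument, as written, cannot be salvaged without this (or an equivalent) device, because the $\mu$-dependence it introduces is exponential rather than removable.
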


 \begin{proof}
 We start with the equivalence \ineq{mainauxineq1}.
Let $m:= 2\lfloor \mu/2 \rfloor$. Then $m$ is an even integer such that $\mu-2 < m \leq \mu$ (note that $m=0$ if $\mu<2$), and
$Q_{n+m} := \varphi^m P_n \in \Poly_{n+m} \subset \Poly_{2n}$.

Since $w$ is an  \astar{} weight, then $w \varphi^{\gamma}$, $\gamma>0$,  is also an  \astar{} weight (see \rem{remark25}) and
 \[
 (w \varphi^{\gamma  })_n \sim w_n \varphi_n^{\gamma  },
 \]
where
the equivalence constants depend on $\lceil \gamma\rceil $  and the doubling constant of $w$. 

Hence, denoting $E_n :=[-1+n^{-2}, 1-n^{-2}]$,  $\eta := \mu-m$, noting that $0\leq \eta <2 $ (and so $\lceil \eta \rceil$ is either $0$, $1$ or $2$ allowing us to replace constants that depend on $\lceil \eta \rceil$ by those independent of $\eta$), and using Lemmas~\ref{auxlemma} and \ref{lemma26},  \cor{cordw}, and the observation that $w_n(x)\sim w_k(x)$ if $n\sim k$,   we have
\begin{eqnarray*}
\norm{\varphi^\mu w P_n}{} &=& \norm{\varphi^{\eta} w  Q_{n+m}}{}   \sim \norm{(w \varphi^{\eta })_n  Q_{n+m}}{}
\sim \norm{ (w \varphi^{\eta})_n Q_{n+m}}{E_n}  \\
& \sim & \norm{ w_n \varphi_n^{\eta } Q_{n+m}}{E_n}
\sim \norm{w_n \varphi^{\eta  }   Q_{n+m}}{E_n} .
\end{eqnarray*}
Since   $w_n \varphi^{\eta }$ is an \astar{} weight (see \rem{remark25}), we can continue as follows:
\begin{eqnarray*}
 \norm{ w_n \varphi^{\eta } Q_{n+m}}{E_n} \sim  \norm{ w_n \varphi^{\eta } Q_{n+m}}{}
 = \norm{w_n   \varphi^\mu   P_{n}}{} .
\end{eqnarray*}
Note that none of the constants in the equivalences above depend on $\mu$. This completes the proof of \ineq{mainauxineq1}.

\medskip

Now, let $\E_n := \left\{ x \st \sqrt{1-x^2} \leq 1/n \right\}$ and note that $\lambda_n(x) =1/n$ if $x\in \E_n$, and $\lambda_n(x) =\varphi(x)$ if $x\in [-1,1]\setminus \E_n$.
Using \ineq{mainauxineq1} we have
\begin{eqnarray*}
 \norm{w \lambda_n^\mu P_n}{}  & \leq &
 \norm{w \lambda_n^\mu P_n}{\E_n} + \norm{w \lambda_n^\mu P_n}{[-1,1]\setminus \E_n} \\
 & = &
 n^{- \mu}  \norm{w P_n}{\E_n} +   \norm{w \varphi^\mu P_n}{[-1,1]\setminus \E_n} \\
 & \leq &
 n^{- \mu}  \norm{w P_n}{} +   \norm{w \varphi^\mu P_n}{} \\
  & \leq &
c_0   \left( n^{- \mu}   \norm{w_n P_n}{} +   \norm{w_n \varphi^\mu P_n}{}\right) \\
 & \leq &
2c_0  \norm{w_n \lambda_n^\mu P_n}{} .
\end{eqnarray*}
In the other direction, the sequence of inequalities is exactly the same (switching $w$ and $w_n$).
This verifies \ineq{mainauxineq2}.
\end{proof}

If we allow constants to depend on $\mu$, then we have the following result.

\begin{corollary} \label{newjust1}
Let $w$ be an \astar{} weight,    $n\in\N$ and $\mu \geq 0$. Then, for any  $P_n \in \Poly_n$,
\[
\norm{w \varphi_n^\mu P_n}{} \sim    \norm{w \varphi^\mu P_n}{} \sim \norm{w_n \varphi^\mu P_n}{}   \sim \norm{ w_n \varphi_n^\mu P_n}{} ,
\]
where all equivalence constants 
are independent of $n$ and $P_n$.
\end{corollary}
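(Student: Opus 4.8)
The plan is to derive the four-way equivalence in \cor{newjust1} directly from \thm{mainauxthm} and the elementary pointwise comparison between $\varphi$ and $\varphi_n$. The key observation is that on $[-1,1]$ one has $\varphi(x) \leq \varphi_n(x) = \varphi(x)+1/n \leq 2\lambda_n(x)$, and also $\lambda_n(x) \leq \varphi_n(x)$; indeed $\lambda_n(x)=\max\{\varphi(x),1/n\}$ satisfies $\lambda_n(x) \leq \varphi(x)+1/n = \varphi_n(x) \leq 2\lambda_n(x)$. Hence $\varphi_n \sim \lambda_n$ on $[-1,1]$ with absolute constants, and therefore $\varphi_n^\mu \sim \lambda_n^\mu$ with constants depending only on $\mu$ (via $2^\mu$). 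This is the only place where the dependence on $\mu$ enters, which is exactly why the present corollary, unlike \thm{mainauxthm}, must allow constants depending on $\mu$.

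First I would record the pointwise bounds above and conclude $\norm{w\varphi_n^\mu P_n}{} \sim \norm{w\lambda_n^\mu P_n}{}$ and $\norm{w_n\varphi_n^\mu P_n}{} \sim \norm{w_n\lambda_n^\mu P_n}{}$, with constants depending only on $\mu$. Next, since $\varphi \leq \varphi_n \leq 2\lambda_n$ pointwise and, on the set $[-1,1]\setminus\E_n$ where $\lambda_n=\varphi$ while on $\E_n$ one has $\lambda_n = 1/n \leq \varphi_n \leq 2/n$, a direct splitting (exactly as in the proof of \ineq{mainauxineq2} in \thm{mainauxthm}) gives $\norm{w\varphi^\mu P_n}{} \leq \norm{w\varphi_n^\mu P_n}{} \leq 2\norm{w\lambda_n^\mu P_n}{}$ and, in the reverse direction, $\norm{w\lambda_n^\mu P_n}{} \leq \norm{w\varphi^\mu P_n}{} + n^{-\mu}\norm{wP_n}{}$. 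The term $n^{-\mu}\norm{wP_n}{}$ is then controlled: applying \ineq{mainauxineq2} of \thm{mainauxthm} with the exponent $\mu$ fixed, $\norm{w\lambda_n^\mu P_n}{} \sim \norm{w_n\lambda_n^\mu P_n}{}$, and since $n^{-\mu}\norm{w_n P_n}{} \leq \norm{w_n\lambda_n^\mu P_n}{}$ (because $\lambda_n \geq 1/n$), we get $n^{-\mu}\norm{wP_n}{} \lesssim \norm{w\lambda_n^\mu P_n}{}$ after one more use of \lem{auxlemma} ($\norm{wP_n}{}\sim\norm{w_nP_n}{}$). Combining, $\norm{w\varphi^\mu P_n}{} \sim \norm{w\lambda_n^\mu P_n}{} \sim \norm{w\varphi_n^\mu P_n}{}$.

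Then I would bring in the remaining two quantities $\norm{w_n\varphi^\mu P_n}{}$ and $\norm{w_n\varphi_n^\mu P_n}{}$. By \thm{mainauxthm}, \ineq{mainauxineq2}, $\norm{w\lambda_n^\mu P_n}{} \sim \norm{w_n\lambda_n^\mu P_n}{}$, and by the same $\varphi_n\sim\lambda_n$ comparison applied with weight $w_n$ in place of $w$, $\norm{w_n\lambda_n^\mu P_n}{} \sim \norm{w_n\varphi_n^\mu P_n}{}$. Finally, repeating the splitting argument with $w_n$ replacing $w$ (and using that $(w_n)_n \sim w_n$, or more simply \lem{auxlemma} together with $\norm{w_n P_n}{}\sim\norm{wP_n}{}$) yields $\norm{w_n\varphi^\mu P_n}{} \sim \norm{w_n\lambda_n^\mu P_n}{}$. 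Chaining all the equivalences produces the asserted four-way equivalence, with all constants depending only on $\mu$ (and the weight) and independent of $n$ and $P_n$.

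The steps here are all short and the only genuine content is bookkeeping; there is no real obstacle, since \thm{mainauxthm} already does the heavy lifting (the Remez-type inequality \cor{cordw} and the boundary-layer argument on $E_n$). The one point requiring a little care is making sure the term $n^{-\mu}\norm{wP_n}{}$ is absorbed rather than left dangling — this is handled by noting $\lambda_n \geq 1/n$ so that $n^{-\mu}\le \lambda_n^\mu$ pointwise, hence $n^{-\mu}\norm{wP_n}{}\le\norm{w\lambda_n^\mu P_n}{}$ directly, and similarly with $w_n$. So in fact no appeal to \thm{mainauxthm} is even needed for the $n^{-\mu}$ term; it follows from the pointwise inequality alone, and \thm{mainauxthm} is used only to pass between $w$ and $w_n$.
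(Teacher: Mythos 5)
Your reduction to $\lambda_n$ and your use of \thm{mainauxthm} to pass between $w$ and $w_n$ for each fixed profile are fine, but the one link that carries the real content --- $\norm{w\lambda_n^\mu P_n}{} \leq c\norm{w\varphi^\mu P_n}{}$ --- is not established: the absorption of the boundary term is circular. From your splitting you have $\norm{w\lambda_n^\mu P_n}{} \leq \norm{w\varphi^\mu P_n}{} + n^{-\mu}\norm{wP_n}{\E_n}$, and you then bound $n^{-\mu}\norm{wP_n}{}$ by a constant times $\norm{w\lambda_n^\mu P_n}{}$, i.e.\ by the very quantity you are trying to estimate. Since that constant is at least $1$, the resulting inequality $\norm{w\lambda_n^\mu P_n}{}\leq \norm{w\varphi^\mu P_n}{}+C\norm{w\lambda_n^\mu P_n}{}$ yields nothing. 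Note also that \thm{mainauxthm} only compares $w$ with $w_n$ for a \emph{fixed} profile ($\varphi^\mu$ in \ineq{mainauxineq1}, $\lambda_n^\mu$ in \ineq{mainauxineq2}); it never compares the two profiles with each other, so no formal combination of those two equivalences with the pointwise inequalities $\varphi\leq\varphi_n\sim\lambda_n$ can produce the missing direction. Your closing remark that ``no appeal to \thm{mainauxthm} is even needed for the $n^{-\mu}$ term'' is exactly where the argument breaks: that term is the whole difficulty.

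The gap can be closed in two ways. The paper's route is the chain $\norm{w\varphi^\mu P_n}{}\sim\norm{(w\varphi^\mu)_n P_n}{}\sim\norm{w_n\varphi_n^\mu P_n}{}$, obtained by applying \lem{auxlemma} to the \astar{} weight $w\varphi^\mu$ (see \rem{remark25}) together with the equivalence $(w\varphi^\mu)_n\sim w_n\varphi_n^\mu$ already used in the proof of \thm{mainauxthm}; combined with the easy chain $\norm{w\varphi^\mu P_n}{}\sim\norm{w_n\varphi^\mu P_n}{}\leq\norm{w_n\varphi_n^\mu P_n}{}\sim\norm{w\varphi_n^\mu P_n}{}$, this closes the loop (and is where the $\mu$-dependence of the constants genuinely enters, through the \astar{} constant of $w\varphi^\mu$). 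Alternatively, your boundary term can be absorbed correctly via the Remez inequality: by \cor{cordw} one has $\norm{wP_n}{}\leq c\norm{wP_n}{[-1,1]\setminus\E_n}$ (for $n$ large enough, since $\int_{\E_n}(1-x^2)^{-1/2}\,dx\leq \pi/n$), and on $[-1,1]\setminus\E_n$ one has $\varphi\geq 1/n$, whence $n^{-\mu}\norm{wP_n}{}\leq c\norm{w\varphi^\mu P_n}{}$ with $c$ independent of $\mu$. Either way, an appeal to the Remez-type machinery hidden in \lem{auxlemma} or \cor{cordw} is unavoidable; the pointwise inequality $\lambda_n\geq 1/n$ alone cannot do it. (A cosmetic point: $\varphi_n^\mu\leq 2^\mu\lambda_n^\mu$, not $2\lambda_n^\mu$; harmless here since constants may depend on $\mu$.)
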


\begin{proof}
Since $\lambda_n(x) \leq \varphi_n(x) \leq 2 \lambda_n(x)$ and $\varphi(x) \leq \varphi_n(x)$, we immediately get from \thm{mainauxthm}
\[
\norm{w \varphi^\mu P_n}{} \sim \norm{w_n \varphi^\mu P_n}{} \leq \norm{w_n\varphi_n^\mu P_n}{}\sim \norm{w\varphi_n^\mu P_n}{} .
\]
At the same time,
\[
\norm{w \varphi^\mu P_n}{}   \sim      \norm{ (w\varphi^{\mu })_n P_n}{}  \sim   \norm{w_n  \varphi_n^{\mu} P_n}{}.
\]
and the proof is complete.
\end{proof}

\begin{theorem}[Markov-Bernstein  type theorem] \label{thm5.5}
Let $w$ be an \astar{} weight and  $r\in\N$.  Then, for all $n\in\N$ and $P_n\in\Poly_n$,
\[
  n^{-r} \norm{w \varphi^r P_n^{(r)}}{}    \sim  n^{-r} \norm{w_n \varphi^r P_n^{(r)}}{}  \sim \norm{w_n \rho_n^r P_n^{(r)}}{}   \sim   \norm{w \rho_n^r P_n^{(r)}}{}
  \leq c   \norm{w P_n }{} \sim  \norm{w_n P_n }{} ,
\]
where the constant $c$ and all equivalence constants
are independent of $n$ and $P_n$.
\end{theorem}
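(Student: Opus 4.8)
The plan is to reduce everything to the already-established equivalences and the classical (unweighted, or $\varphi$-weighted) Markov--Bernstein inequality, exploiting that each of the four quantities on the left is comparable to the others directly. First I would dispose of the equivalences among the four left-hand quantities: the equivalence $n^{-r}\norm{w\varphi^rP_n^{(r)}}{}\sim n^{-r}\norm{w_n\varphi^rP_n^{(r)}}{}$ is an immediate application of \thm{mainauxthm} (inequality \ineq{mainauxineq1}) with the \astar{} weight $w$ and exponent $\mu=r$ applied to the polynomial $P_n^{(r)}\in\Poly_{n-r}\subset\Poly_n$; likewise $\norm{w_n P_n}{}\sim\norm{w P_n}{}$ is \lem{auxlemma}. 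The two ``$\rho_n$'' quantities require a little more care because $\rho_n=n^{-1}\varphi+n^{-2}$ is not a power of $\varphi$, but note $\rho_n(x)=n^{-1}\varphi_n(x)\sim n^{-1}\lambda_n(x)$, so $n^{-r}\norm{w\rho_n^rP_n^{(r)}}{}\sim\norm{w\lambda_n^rP_n^{(r)}}{}$, and then \ineq{mainauxineq2} of \thm{mainauxthm} gives $\norm{w\lambda_n^rP_n^{(r)}}{}\sim\norm{w_n\lambda_n^rP_n^{(r)}}{}$; combining with $\lambda_n\sim\varphi_n\geq\varphi$ and, in the reverse direction, \cor{newjust1} (which allows replacing $\varphi$ by $\varphi_n$ at the cost of $\mu$-dependent constants, here $\mu=r$ is fixed) closes the chain $n^{-r}\norm{w\varphi^rP_n^{(r)}}{}\sim n^{-r}\norm{w_n\varphi^rP_n^{(r)}}{}\sim\norm{w_n\rho_n^rP_n^{(r)}}{}\sim\norm{w\rho_n^rP_n^{(r)}}{}$.

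It then remains to prove the single genuine inequality $\norm{w\rho_n^rP_n^{(r)}}{}\leq c\norm{wP_n}{}$ (equivalently $n^{-r}\norm{w\varphi^rP_n^{(r)}}{}\leq c\norm{wP_n}{}$). The standard route is to pass to the circle: set $W(t):=w(\cos t)$, $T_n(t):=P_n(\cos t)$, so $W$ is a $2\pi$-periodic \astar{} weight on $[0,2\pi]$ and $T_n$ is a trigonometric polynomial of degree $\leq n$. One iterates the first-order trigonometric Bernstein inequality for \astar{} (indeed doubling) weights, $\norm{W T_n'}{[0,2\pi]}\leq c\,n\norm{W T_n}{[0,2\pi]}$, which is exactly the kind of estimate underlying \cite{mt2000} and follows from the Remez-type \thm{thmremez} together with the doubling property (this is the content of \cite[(7.27)]{mt2000} territory). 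Iterating $r$ times gives $\norm{W T_n^{(r)}}{[0,2\pi]}\leq c\,n^r\norm{W T_n}{[0,2\pi]}$. Translating back through $x=\cos t$, the chain-rule produces derivatives of $\arccos$, and the upshot (a routine but classical computation, see Ditzian--Totik) is that $|P_n^{(r)}(x)|\,\varphi(x)^r$ near the interior and $|P_n^{(r)}(x)|\,n^{-r}$ near $\pm1$ are both controlled, i.e. $\norm{w\rho_n^rP_n^{(r)}}{}\leq c\norm{wP_n}{}$; alternatively one invokes \cor{cordw} on a partition of $[-1,1]$ into $O(n)$ subintervals of equivalent $\rho_n$-length and uses the local (algebraic) Markov--Bernstein inequality on each, patching with the \astar{} property (which makes $w$ essentially constant on each such subinterval, by \lemp{iv}-type reasoning), exactly as $w_n$ is built to allow.

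The main obstacle is the last inequality, and within it the honest work is establishing the trigonometric Bernstein inequality $\norm{W T_n'}{}\leq c n\norm{W T_n}{}$ for a periodic \astar{} (or merely doubling) weight $W$; everything else is bookkeeping with \thm{mainauxthm}, \lem{auxlemma}, and \cor{newjust1}. This Bernstein step is precisely where \thm{thmremez} (the Remez inequality for \astar{} weights) enters: one writes $T_n'$ via a de la Vallée Poussin--type kernel or the classical interpolatory formula for $T_n'$ at equally spaced nodes, bounds the resulting quadrature-type sum of $|T_n|$-values by $\norm{WT_n}{[0,2\pi]\setminus E}$ on a small exceptional set $E$, and then removes $E$ using \thm{thmremez}; the doubling property guarantees $W$ does not oscillate too wildly between neighboring nodes so the weighted values can be compared. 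I would present this step by citing \cite{mt2000} for the periodic weighted Bernstein inequality and then give the short transfer to $[-1,1]$ via $x=\cos t$, keeping the computation of $\varphi$- and $\rho_n$-factors explicit only to the extent needed to identify $n^{-r}\norm{w\varphi^rP_n^{(r)}}{}$.
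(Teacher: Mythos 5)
Your proposal is correct and follows essentially the same route as the paper: the chain of equivalences is exactly what \cor{newjust1} (built on \thm{mainauxthm} and \lem{auxlemma}) delivers when applied to $P_n^{(r)}\in\Poly_{n-r}\subset\Poly_n$, and the one genuine inequality $\norm{w\varphi^r P_n^{(r)}}{}\leq cn^r\norm{wP_n}{}$ is, in the paper, simply cited from \cite[(7.29)]{mt2000} (or \cite[Lemma 6.1]{k-acta}), which is precisely the reference you fall back on. The extra detail you give on reproving the weighted Bernstein step via the circle transfer is not needed for the paper's argument, though if you did write it out you would want to iterate on the algebraic side (applying the first-order estimate with weight $w\varphi^k$, which is again \astar{} by \rem{remark25}) rather than iterating $T_n\mapsto T_n'$ and untangling the chain rule afterwards.
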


\begin{proof}
The statement of the lemma is an immediate consequence of \cor{newjust1} and either of the estimates
\[
   \norm{w_n \rho_n^r P_n^{(r)} }{} \leq c \norm{w_n P_n}{} ,
\]
 (see \cite[Lemma 6.1]{k-acta}, for example), or
\[
   \norm{w  \varphi^r P_n^{(r)} }{} \leq c n^r \norm{w  P_n}{} ,
\]
(see \cite[(7.29)]{mt2000} or \cite[(2.5)]{mt2001}), where   the constant $c$ depends only on $r$ and the \astar{} constant of $w$.
\end{proof}

\begin{lemma} \label{lem8.5j}
Let $w$ be an \astar{} weight, $A >0$ and $\Z\in\bZM$.  Then   for any
$n, r\in\N$, $1\leq j \leq M$, and any polynomials $Q_n\in\Poly_n$ and $q_r \in \Poly_r$ satisfying  $Q_n^{(\nu)}(z_j)=q_r^{(\nu)}(z_j)$, $0\leq \nu\leq r-1$, the following inequality holds
\[
\norm{w(Q_n-q_r)}{\Z_{A, 1/n}^j}  \leq c n^{-r}     \norm{w \varphi^r  Q_n^{(r)} }{} ,
\]
where the constant $c$ depends only on $r$,  $A$ and the weight $w$.
\end{lemma}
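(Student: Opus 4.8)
The plan is to use a Taylor-type estimate for $Q_n-q_r$ based on the matching of derivatives at $z_j$ up to order $r-1$. Since $Q_n^{(\nu)}(z_j)=q_r^{(\nu)}(z_j)$ for $0\le\nu\le r-1$, the difference $R:=Q_n-q_r$ is a polynomial (of degree $\le\max\{n-1,r-1\}$) vanishing to order $r$ at $z_j$, so by Taylor's formula with integral remainder, for $x\in\Z_{A,1/n}^j$,
\[
R(x)=\frac{1}{(r-1)!}\int_{z_j}^{x}(x-u)^{r-1}R^{(r)}(u)\,du=\frac{1}{(r-1)!}\int_{z_j}^{x}(x-u)^{r-1}Q_n^{(r)}(u)\,du,
\]
using $q_r^{(r)}\equiv0$. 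Thus $|R(x)|\le\frac{|x-z_j|^r}{r!}\,\|Q_n^{(r)}\|_{\Z_{A,1/n}^j}$, and since $|x-z_j|\le A\rho_n(z_j)$ on $\Z_{A,1/n}^j$, we get $|R(x)|\le c(r,A)\,\rho_n(z_j)^r\,\|Q_n^{(r)}\|_{\Z_{A,1/n}^j}$.

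Next I would multiply by $w(x)$ and bound pointwise. I want to replace $w(x)$ by something comparable to $w$ evaluated near the estimate of $Q_n^{(r)}$, and convert $\rho_n(z_j)^r$ into $\varphi(u)^r$-type factors. The key comparisons: on $\Z_{A,1/n}^j$ one has $\rho_n(z_j)\sim\rho_n(u)$ (this is the standard fact, cited as \cite[(2.3)]{mt2001} and used in \lem{lemma26}, since $|u-z_j|\le A\rho_n(z_j)$), and hence $w(x)\le c\,w_n(x)\le c\,w_n(u)$ for $x,u\in\Z_{A,1/n}^j$ — here I only need the doubling/$A^*$ property of $w$, not membership in $\W^*(\Z)$, because $w_n(x)\sim w_n(u)$ whenever $|x-u|\le\kappa\rho_n(x)$, which holds on the small interval $\Z_{A,1/n}^j$. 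Actually it is cleanest to estimate $\sup_{x\in\Z_{A,1/n}^j}w(x)|R(x)|$ and then, inside the integral remainder written for a fixed $x$, bound $w(x)$ by $c\,w_n(u)$ uniformly in $u$ between $z_j$ and $x$; combining with $w(u)\le L^*w_n(u)$ is the wrong direction, so instead I use $w_n(x)\sim w_n(z_j)\sim w_n(u)$ and $w(x)\le L^* w_n(x)$.

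Putting this together: for $x\in\Z_{A,1/n}^j$,
\[
w(x)|R(x)|\le c\,w_n(x)\,\rho_n(z_j)^r\,\sup_{u\in\Z_{A,1/n}^j}|Q_n^{(r)}(u)|
\le c\,\rho_n(z_j)^r\,\sup_{u\in\Z_{A,1/n}^j}w_n(u)\,|Q_n^{(r)}(u)|,
\]
using $w_n(x)\sim w_n(u)$ on this interval. Since $\rho_n(z_j)\sim\rho_n(u)$ there, $\rho_n(z_j)^r\le c\,\rho_n(u)^r$, and because $\varphi(u)\le n\rho_n(u)$ always while $\rho_n(u)\le 2n^{-1}\varphi_n(u)$, we can write $\rho_n(z_j)^r\le c\,n^{-r}\varphi_n(u)^r$. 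Hence
\[
w(x)|R(x)|\le c\,n^{-r}\sup_{u}w_n(u)\varphi_n(u)^r|Q_n^{(r)}(u)|
\le c\,n^{-r}\norm{w_n\varphi_n^r Q_n^{(r)}}{}
\le c\,n^{-r}\norm{w\varphi^r Q_n^{(r)}}{},
\]
where the last step is \cor{newjust1} applied to the polynomial $Q_n^{(r)}\in\Poly_n$ (with $\mu=r$), since $w$ is an $A^*$ weight. Taking the supremum over $x\in\Z_{A,1/n}^j$ gives the claim.

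The main obstacle is bookkeeping the chain of comparisons $w(x)\sim w_n(x)\sim w_n(u)$ and $\rho_n(z_j)\sim\rho_n(u)$ on $\Z_{A,1/n}^j$ with constants depending only on $A$, $r$ and the weight — in particular making sure the replacement $w(x)\le c\,w_n(x)$ (valid by \ineq{wlesswn}) together with $w_n(x)\sim w_n(u)$ (valid since $\Z_{A,1/n}^j$ has length $\le 2A\rho_n(z_j)\le c\,\rho_n(x)$) is legitimate, and that none of this requires $w\in\W^*(\Z)$, only $w\in\AW$. Once those equivalences are in hand, the estimate is just Taylor's remainder plus \cor{newjust1}, so the only genuinely delicate point is the uniformity of constants.
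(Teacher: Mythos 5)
Your proposal is correct and follows essentially the same route as the paper: Taylor's formula with integral remainder at $z_j$, the bound $|x-z_j|^r\le (A\rho_n(z_j))^r$ with $\rho_n(z_j)\sim\rho_n(u)$ on $\Z_{A,1/n}^j$, the weight comparison $w(x)\le c\,w_n(x)\sim w_n(u)$ (the paper gets this via the $A^*$ and doubling properties rather than \ineq{wlesswn} plus the $w_n$-comparison, but these are interchangeable), and finally the passage from $\norm{w_n\rho_n^r Q_n^{(r)}}{}$ to $n^{-r}\norm{w\varphi^r Q_n^{(r)}}{}$, which the paper takes from \thm{thm5.5} and you take directly from \cor{newjust1} — the same fact, since $\rho_n=n^{-1}\varphi_n$.
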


\begin{proof}
Denote  $I := \Z_{A, 1/n}^j$,   $z:=z_j$, and note that $(Q_n - q_r)^{(\nu)}(z)=0$, $0\leq \nu\leq r-1$.
Using Taylor's theorem with the integral remainder 
we have
\[
Q_n(x) - q_r(x)  = {1 \over (r-1)!} \int_z^x  (x-u)^{r-1}  Q_n^{(r)}(u) du ,
\]
which implies
\begin{eqnarray*}
  \norm{w(Q_n-q_r)}{I}    & \leq &
  \sup_{x\in I} w(x)   \left|\int_z^x (x-u)^{r-1}   Q_n^{(r)}(u)    du\right|
  \leq
\norm{Q_n^{(r)}}{I} \sup_{x\in I}  w(x)|x-z|^r   \\
& \leq &
\left(A\rho_n(z)\right)^r \norm{Q_n^{(r)}}{I}  \sup_{x\in I}  w(x)
 \leq
c  \norm{\rho_n^r Q_n^{(r)}}{I} {1 \over |I|}   w(I)   ,
\end{eqnarray*}
 where, in the last inequality, we used the fact that $w$ is an \astar{} weight and
 $\rho_n(x)\sim \rho_n(z)$, $x\in I$. Now, since $w$ is doubling, $w(I)/|I| \leq c w[z-\rho_n(z), z+\rho_n(z)]/|I| \leq c   w_n(z) \leq c w_n(x)$, $x\in I$, and so
\[
  \norm{w(Q_n-q_r)}{I}    \leq
c  \norm{w_n \rho_n^r Q_n^{(r)}}{I}
  \leq
c  n^{-r} \norm{w  \varphi^r Q_n^{(r)}}{} ,
\]
 where the last estimate follows from \thm{thm5.5} .
\end{proof}

\begin{lemma} \label{lem7.2}
Let $\Z\in\bZ_M$,  $w\in \W^*(\Z)$, $c_*>0$,
  $n,r\in\N$, $A>0$ and $0<t\leq c_*/n$. Then, for any $P_n\in\Poly_n$, we have
\[
\Omega_\varphi^r(P_n, A, t)_{w} \leq c t^r \norm{w  \varphi^r P_n^{(r)}}{} ,
\]
where $c$ depends only on  $r$, $c_*$   and the weight $w$.
\end{lemma}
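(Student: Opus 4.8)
The plan is to estimate $\Omega_\varphi^r(P_n, A, t)_w$ directly from the definition \ineq{mpmod} by expressing the $r$th symmetric difference of $P_n$ as an iterated integral of $P_n^{(r)}$. Fix $h\in(0,t]$ and a point $x\in\Dom(A,h,r)$, so that $[x-rh\varphi(x)/2, x+rh\varphi(x)/2]\subset\I_{A,h}$. Write $n':=\lceil 1/h\rceil$; since $0<t\leq c_*/n$ we have $h\leq c_*/n$, hence $n'$ and $n$ are comparable (with constants depending only on $c_*$), so $w(x)\sim w_n(x)\sim w_{n'}(x)$ and $\rho_n(x)\sim\rho_{n'}(x)\sim\rho(h,x)/h$ on this interval. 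The standard identity
\[
\Delta_{h\varphi(x)}^r(P_n, x) = \int_{-h\varphi(x)/2}^{h\varphi(x)/2}\!\!\cdots\!\int_{-h\varphi(x)/2}^{h\varphi(x)/2} P_n^{(r)}\!\Big(x+\sum_{i=1}^r u_i\Big)\, du_1\cdots du_r
\]
bounds $|\Delta_{h\varphi(x)}^r(P_n,x)|$ by $(h\varphi(x))^r\max|P_n^{(r)}(\xi)|$, where the maximum is over $\xi$ in the interval $[x-rh\varphi(x)/2, x+rh\varphi(x)/2]$. Multiplying by $w(x)$ and using \lem{auaulemma} (applicable precisely because $x\in\Dom(A,h,r)$ and $w\in\W^*(\Z)$) to replace $w(x)$ by $w(\xi)$ up to constants depending only on $r$, $A$ and $w$, I get
\[
w(x)\left|\Delta_{h\varphi(x)}^r(P_n, x, \I_{A,h})\right| \leq c\,(h\varphi(x))^r\, w(\xi)\,|P_n^{(r)}(\xi)| \leq c\, h^r\, \norm{w\varphi^r P_n^{(r)}}{}.
\]
Taking the essential supremum over admissible $x$ and then the supremum over $h\in(0,t]$ yields $\Omega_\varphi^r(P_n, A, t)_w\leq c\,t^r\norm{w\varphi^r P_n^{(r)}}{}$, which is the claim.

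The only subtlety is making sure that $\varphi$ evaluated inside the difference can be absorbed: one uses $\varphi(\xi)\leq\varphi(x)+rh\varphi(x)/2\leq c\varphi(x)$ for $\xi$ in the difference interval when $h$ is small (or, alternatively, that the interval $[x-rh\varphi(x)/2, x+rh\varphi(x)/2]$ is contained in $[-1,1]$ forces $\varphi(x)\sim\varphi(\xi)$ there, exactly as in the proof of \lem{twot}), so that $(h\varphi(x))^r\leq c\,h^r\varphi(\xi)^r$ and the factor $\varphi(\xi)^r$ pairs with $|P_n^{(r)}(\xi)|$ to produce the norm on the right. I expect the main (and essentially only) obstacle is bookkeeping: verifying that all the comparability constants — between $h$ and $1/n$, between $w(x)$ and $w_n(x)$, and between $\varphi(x)$ and $\varphi(\xi)$ — depend only on $r$, $c_*$ and $w$ and not on $h$, $x$ or $n$; this is routine given \lem{auaulemma} and the hypothesis $t\leq c_*/n$.

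One remark: the case $h>\sqrt{2/A}$ makes $\I_{A,h}=\emptyset$ so the difference vanishes identically, and the case where $\Dom(A,h,r)=\emptyset$ is likewise trivial, so we may freely assume the relevant interval is nonempty throughout.
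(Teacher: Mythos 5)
Your overall plan (integral representation of the $r$th symmetric difference plus pointwise comparison of weights) is reasonable, but the step you yourself flag as "the only subtlety" is in fact a genuine gap, and it is not the bookkeeping that fails.

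From the identity you quote, the argument $x+\sum u_i$ ranges over the \emph{whole} interval $[x-rh\varphi(x)/2,\,x+rh\varphi(x)/2]$, so your $\xi$ may sit at an endpoint of that interval. The comparison $\varphi(\xi)\sim\varphi(x)$ is false there: the computation in the proof of \lem{twot} gives $[\varphi(y)/\varphi(x)]^2\geq 1/2$ only because $y=x+\gamma h\varphi(x)$ with $|\gamma|\leq r/2$ lies in the \emph{middle half} of the doubled interval $[x-rh\varphi(x),x+rh\varphi(x)]\subset[-1,1]$; with the same radius for the containment and for $\xi$, the same computation only yields $[\varphi(\xi)/\varphi(x)]^2\geq (r^2/4-\gamma^2)h^2\geq 0$. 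Concretely, if $\pm1\notin\Z$ then $\I_{A,h}$ reaches all the way to $\pm1$, one can have $x+rh\varphi(x)/2=1$ with $x\in\Dom(A,h,r)$, and then $\varphi(\xi)=0$ at the point where $|P_n^{(r)}|$ may well attain its maximum (think of Chebyshev-type polynomials, whose $r$th derivative peaks at $\pm1$). So the inequality $(h\varphi(x))^r\max_\xi|P_n^{(r)}(\xi)|\leq c\,h^r\varphi(\xi)^r|P_n^{(r)}(\xi)|$ does not follow. The standard way to push the sup-bound route through is either to replace the crude bound by the B-spline (Peano kernel) form of the $r$th difference together with the Ditzian--Totik integral estimate $\int M_r(s)\varphi(x+s)^{-r}\,ds\leq c(h\varphi(x))^r\varphi(x)^{-r}$, or to exploit that $P_n^{(r)}$ is a polynomial (Remez/Markov--Bernstein near the endpoints); you invoke neither. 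A secondary defect: your use of \lem{auaulemma} makes the constant depend on $A$, whereas the lemma asserts dependence only on $r$, $c_*$ and $w$.

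For comparison, the paper avoids all pointwise comparisons on the difference interval: since $w\leq L^*w_n$ pointwise by \ineq{wlesswn}, the restricted modulus $\Omega_\varphi^r(P_n,A,t)_w$ is bounded by the full Ditzian--Totik modulus $\w_\varphi^r(P_n,t)_{w_n}$ with the mollified weight, which is controlled by $ct^r\norm{w_n\varphi^rP_n^{(r)}}{}$ by \cite{k-acta}*{Lemma 7.2}, and then $\norm{w_n\varphi^rP_n^{(r)}}{}\leq c\norm{w\varphi^rP_n^{(r)}}{}$ by \cor{newjust1}. That is why the constant is independent of $A$ and why no condition $w\in\W^*(\Z)$ beyond $w\in\AW$ is actually exercised at this point.
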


\begin{remark}
Using the same method as the one used to prove \cite{k-singular}*{Lemma 8.2} one can show that a stronger result than \lem{lem7.2} is valid. Namely, if
$f$ is such that $f^{(r-1)}\in \AC_\loc \left( (-1,1)\setminus \Z  \right)$
  and $\norm{w \varphi^r f^{(r)}}{} < \infty$, then one can show that
\[
\Omega_\varphi^r(f, A, t)_{w} \leq c t^r \norm{w  \varphi^r f^{(r)}}{} , \quad t>0.
\]
However,  \lem{lem7.2} whose proof is simpler and shorter is sufficient for our purposes.
\end{remark}

\begin{proof}[Proof of \lem{lem7.2}]
It follows from \cite{k-acta}*{Lemma 7.2} and \cor{newjust1} that, for any $c_*>0$ and $0<t \leq c_*/n$,
\[
\w_\varphi^r(P_n, t)_{w_n} \leq c t^r \norm{w_n \varphi^r P_n^{(r)}}{} \leq c t^r \norm{w  \varphi^r P_n^{(r)}}{},
\]
where the constants $c$ depend  on $r$, $c_*$ and the weight $w$.
Therefore, since any \astar{} weight $w$ satisfies $w(x) \leq c w_n(x)$, for any $x\in [-1,1]$ and $n\in\N$ (see \ineq{wlesswn}), we have
\begin{eqnarray*}
   \Omega_\varphi^r(P_n, A, t)_{w}   &= &  \sup_{0<h\leq t} \norm{w(\cdot) \Delta_{h\varphi(\cdot)}^r(P_n,\cdot, \I_{A,h})}{}
   \leq   c \sup_{0<h\leq t} \norm{w_n(\cdot) \Delta_{h\varphi(\cdot)}^r(P_n,\cdot, [-1,1])}{}\\
   & \leq & c \w_\varphi^r(P_n, t)_{w_n}\leq c t^r \norm{w \varphi^r P_n^{(r)}}{} .
\end{eqnarray*}
\end{proof}

\sect{Direct theorem}

\begin{theorem} \label{jacksonthm}
Let $w\in\W^*(\Z)$, $r,\nu_0\in\N$, $\nu_0\geq r$, $\ccc>0$, $f\in\L_\infty^w$ and $B>0$.
 Then, there exists $N\in\N$ depending on  $r$, $\ccc$ and the weight $w$, such that
 for every $n \geq N$,     there is a polynomial $P_n \in\Poly_n$ satisfying
\be \label{thing1}
\norm{w(f-P_n)}{}   \leq c   \w_\varphi^r(f, 1, B, \ccc/n)_{ w }
\ee
and
\be \label{thing2}
\norm{ w  \varphi^\nu P_n^{(\nu)}}{} \leq    c n^\nu \w_\varphi^r(f, 1, B, \ccc/n)_{ w }   ,      \quad r\leq \nu \leq \nu_0,
\ee
where constants $c$  depend only on  $r$, $\nu_0$,  $B$, $\ccc$   and the weight $w$.
\end{theorem}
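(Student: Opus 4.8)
The plan is to reduce the global weighted approximation of $f$ to two types of local data: on each interval $\Z_{B,\ccc/n}^j$ around a zero $z_j$ we use the near-best polynomial witnessing $E_r(f,\Z_{B,\ccc/n}^j)_w$, and on the complementary region we use the control supplied by the main-part modulus $\Omega_\varphi^r(f,1,\ccc/n)_w$. First I would fix $n$ large (so that $\ccc/n$ is below the threshold $c_0$ appearing in \cor{corol54}, $B\rho(c_0,z_j)\le\D/2$, and the intervals $\Z_{B,\ccc/n}^j$ are pairwise disjoint and separated from the $z_j$'s). Set $h:=\ccc/n$. For each $j$ pick $q_j\in\Poly_r$ with $\norm{w(f-q_j)}{\Z_{B,h}^j}\le c\,E_r(f,\Z_{B,h}^j)_w$. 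Away from the zeros one uses a standard partition-of-unity / local polynomial construction: cover $\I_{A,h}$ (for a suitable $A$ to be chosen, e.g. $A$ comparable to $1$ via \cor{corollary411}) by intervals of length $\sim\rho_n$ on which $w$ is essentially constant (\lemp{iv}), on each such interval take a polynomial $P_{n,k}$ of degree $O(1)$ realizing the local approximation governed by $\Delta^r_{h\varphi}$-differences, and glue the $P_{n,k}$ together with a smooth partition of unity subordinate to this cover, extending the construction so that near $z_j$ the glued polynomial agrees to order $r-1$ with $q_j$ (matching $P_n^{(\nu)}(z_j)=q_j^{(\nu)}(z_j)$, $0\le\nu\le r-1$). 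The degree bookkeeping gives $P_n\in\Poly_{cn}$ which one rescales to $\Poly_n$ by replacing $n$ with $n/c$ from the outset.

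Next I would establish the two estimates. For \ineq{thing1}: on $\I_{A,h}$ the error $\norm{w(f-P_n)}{\I_{A,h}}$ is bounded by the usual telescoping of $r$th differences (summing $\Delta^r_{h\varphi}$-terms along chains of length $O(\log)$-free, i.e. $O(1)$ steps between adjacent cover intervals), yielding $\le c\,\Omega_\varphi^r(f,1,h)_w$ after invoking \cor{corollary411} and \lem{auaulemma} to replace the weight values at the difference nodes by $w(x)$. On each $\Z_{B,h}^j$ we have $\norm{w(f-P_n)}{\Z_{B,h}^j}\le\norm{w(f-q_j)}{\Z_{B,h}^j}+\norm{w(q_j-P_n)}{\Z_{B,h}^j}\le c\,E_r(f,\Z_{B,h}^j)_w+\norm{w(q_j-P_n)}{\Z_{B,h}^j}$, and the last term is handled by \lem{lem8.5j} applied to $Q_n:=P_n$ and $q_r:=q_j$ (using the matching conditions), giving $\le c\,n^{-r}\norm{w\varphi^r P_n^{(r)}}{}$. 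To close the loop one must bound $\norm{w\varphi^r P_n^{(r)}}{}$ itself — this is where \lem{lem7.2} enters: on each cover interval $P_n$ is a fixed polynomial, and on the ``collar'' pieces near $z_j$ the local polynomial is $q_j$ plus a correction controlled by the modulus; summing the Markov--Bernstein bounds \lem{lem7.2} over the cover (the overlaps are bounded) yields $\norm{w\varphi^r P_n^{(r)}}{}\le c n^r\w_\varphi^r(f,1,B,h)_w$, which is precisely the $\nu=r$ case of \ineq{thing2} and also feeds back into \ineq{thing1}. For $r<\nu\le\nu_0$ one iterates: apply \thm{thm5.5} (a Markov--Bernstein inequality for \astar{} weights) to pass from the $r$th derivative bound to higher derivatives, losing only powers of $n$ and constants depending on $\nu_0$; alternatively bound $\norm{w\varphi^\nu P_n^{(\nu)}}{}$ directly by $\Omega_\varphi^r$-type quantities using \lem{lem7.2} on each piece and then \thm{thm5.5} to absorb the degree inflation.

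The main obstacle is the gluing step near the zeros: one needs a polynomial $P_n$ that simultaneously (a) is close to $q_j$ on $\Z_{B,h}^j$, (b) is close to $f$ on $\I_{A,h}$ in the $w$-weighted sup norm, and (c) satisfies the osculation conditions $P_n^{(\nu)}(z_j)=q_j^{(\nu)}(z_j)$ so that \lem{lem8.5j} is applicable — all while keeping $\varphi^r P_n^{(r)}$ under control in the transition zone $\Z_{B',h}^j\setminus\Z_{B,h}^j$, where neither the pure ``zero'' behavior nor the pure ``away'' behavior dominates. This is exactly the regime where \cor{corol54} (comparing $\w_\varphi^r(f,1,B',h)_w$ to $\w_\varphi^r(f,1,B,h)_w$ for $B'=B(1+1/(2r))$) is designed to help: it lets one enlarge $B$ by a controlled factor, so the collar can be absorbed into an enlarged $\Z^j$-term at the cost of a constant. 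I expect the bulk of the work to be the careful choice of the cover, the partition of unity, and verifying that the error telescoping between the collar and $\I_{A,h}$ does not accumulate — standard in spirit (cf. Ditzian--Totik and \cite{mt2001}) but requiring the \astar{}-specific tools (\lem{auaulemma}, \lemp{iv}, \lem{lem8.5j}, \thm{thm5.5}) at each estimate to replace the Jacobi-weight computations used in \cite{mt2001}.
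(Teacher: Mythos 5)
Your route is genuinely different from the paper's, and as sketched it has two serious gaps. The paper does not build $P_n$ by a local gluing construction at all: it replaces $f$ by a modified function $F$ that equals the near-best local polynomial $q_j$ on a small interval $J_j$ (a few $\rho_n$-lengths wide) around each $z_j$ and equals $f$ elsewhere, proves that $\norm{w(F-f)}{}$ and the \emph{ordinary} Ditzian--Totik modulus $\w_\varphi^r(F,\ccc/n)_{w_n}$ are both dominated by the complete modulus $\w_\varphi^r(f,1,20/\ccc^2,\ccc/n)_w$ (the latter via a three-case analysis of where the difference $\Delta^r_{h\varphi}$ sits relative to the $J_j$'s), and then invokes the ready-made doubling-weight Jackson theorem \thm{jacksonthmacta} from \cite{k-acta} applied to $F$; estimates \ineq{thing1} and \ineq{thing2} then follow from $w\leq cw_n$, \thm{thm5.5}, and \cor{cor2.14} to fix the parameter $B$. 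Your plan, by contrast, attempts to reconstruct the approximant directly, which means re-proving the content of \thm{jacksonthmacta} rather than using it.

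The first gap: gluing the local polynomials $P_{n,k}$ with a \emph{smooth} partition of unity produces a smooth piecewise-polynomial (spline-like) function, not an element of $\Poly_{cn}$. Passing from such a glued object to a single algebraic polynomial of degree $\leq n$ that still satisfies the weighted derivative bounds \ineq{thing2} is exactly the hard technical core that \cite{k-acta} supplies, and your sketch does not address it; the ``degree bookkeeping'' remark does not apply to a non-polynomial object. The second gap is a circularity: you propose to obtain $\norm{w\varphi^r P_n^{(r)}}{}\leq cn^r\w_\varphi^r(f,1,B,\ccc/n)_w$ by ``summing the Markov--Bernstein bounds \lem{lem7.2} over the cover,'' but \lem{lem7.2} runs in the opposite direction --- it bounds $\Omega_\varphi^r(P_n,A,t)_w$ \emph{by} $t^r\norm{w\varphi^r P_n^{(r)}}{}$, so it cannot yield an upper bound on the derivative norm. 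Since your argument for \ineq{thing1} on the intervals $\Z_{B,h}^j$ (via \lem{lem8.5j}) also feeds on this unproved derivative bound, both conclusions remain open in your plan. In any correct construction the bound \ineq{thing2} must come out of the construction itself (as it does in \thm{jacksonthmacta}), not from a Bernstein-type inequality applied after the fact.
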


We use an idea from \cite[Section 3.2]{mt2001} and deduce \thm{jacksonthm} from the following result that was proved in \cite{k-acta}.

\begin{theorem}[\mbox{\cite{k-acta}*{Theorem 5.3 ($p=\infty$)}}] \label{jacksonthmacta}
Let $w$ be a doubling weight, $r, \nu_0\in\N$, $\nu_0\geq r$, and $f\in\L_\infty[-1,1]$.
 Then, for every $n \geq r$ and $0<\ccc\leq 1$,  there exists a polynomial $P_n \in\Poly_n$ such that
\[
\norm{w_n(f-P_n)}{}    \leq c \w_\varphi^r(f, \ccc/n)_{w_n}
\]
and
\[
\norm{w_n \rho_n^\nu P_n^{(\nu)}}{}   \leq c \w_\varphi^r(f, \ccc/n)_{w_n} ,      \quad r\leq \nu \leq \nu_0,
\]
where constants $c$ depend only on  $r$, $\nu_0$,  $\ccc$ and the doubling constant of $w$.
\end{theorem}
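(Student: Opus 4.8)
The plan is to prove \thm{jacksonthmacta} by the classical Chebyshev-partition scheme used in \cite{k-acta}: cut $[-1,1]$ into $O(n)$ intervals on which $\rho_n$ and $w_n$ are essentially constant, approximate $f$ on each by a polynomial of degree $<r$, and glue these local pieces into one polynomial of degree $O(n)$. \textbf{Setup.} Fix $n\ge r$, put $x_j:=\cos(j\pi/n)$ for $0\le j\le n$ and $I_j:=[x_j,x_{j-1}]$ for $1\le j\le n$, so $[-1,1]=\bigcup_j I_j$ with $|I_j|\sim\rho_n(x)\sim\rho_n(x_j)$ for $x\in I_j$. Let $\tilde I_j$ be $I_j$ enlarged by its $2r$ nearest neighbours on each side (intersected with $[-1,1]$); then $|\tilde I_j|\sim\rho_n(x_j)$, the difference $\Delta^r_h(f,\cdot,\tilde I_j)$ is nontrivial on $I_j$ for every $h\le|I_j|$, and, because $w$ is doubling, $w_n$ is slowly varying at scale $\rho_n$, so $w_n(x)\sim w_n(x_j)$ and $\rho_n(x)\sim\rho_n(x_j)$ for $x\in\tilde I_j$ (cf. \cite[(2.3)]{mt2001}). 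First I would pick, for each $j$, a near-best approximant $Q_j\in\Poly_r$ to $f$ on $\tilde I_j$, $\norm{f-Q_j}{\tilde I_j}\le cE_j$, where $E_j$ is the error of best approximation of $f$ on $\tilde I_j$ by elements of $\Poly_r$ in the sup norm, and then invoke the $r$-th order Whitney inequality, $E_j\le c\,\omega_r(f,|\tilde I_j|,\tilde I_j)$, where $\omega_r(g,\delta,J):=\sup_{0<h\le\delta}\norm{\Delta^r_h(g,\cdot,J)}{J}$ is the ordinary (unweighted) $r$-th modulus on $J$, with $\Delta^r_h$ as in \ineq{dd}.

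\textbf{Localising the modulus.} The crux is the estimate $w_n(x_j)E_j\le c\,\w_\varphi^r(f,\ccc/n)_{w_n}$ for every $j$, with $c$ depending on $r$ and $\ccc$; note that $\ccc\le1$ forces every $\varphi$-difference of step $\le\ccc/n$ to have step multiplier $h\varphi(x)\le\rho_n(x)$, i.e.\ to stay within one Chebyshev scale. For an interior knot ($\varphi(x_j)\gtrsim 1/n$, hence $\rho_n(x_j)\sim\varphi(x_j)/n$) I would rewrite each ordinary difference $\Delta^r_s(f,y,\tilde I_j)$ with $s\le|\tilde I_j|/r$ occurring in $\omega_r(f,|\tilde I_j|,\tilde I_j)$ as the $\varphi$-difference $\Delta^r_{h\varphi(y)}(f,y,[-1,1])$ with the \emph{same} nodes, where $h:=s/\varphi(y)\lesssim 1/n$; multiplying by $w_n(y)\sim w_n(x_j)$ gives $w_n(x_j)|\Delta^r_s(f,y,\tilde I_j)|\le c\,\w_\varphi^r(f,c/n)_{w_n}$, and the sub-doubling of $\w_\varphi^r(f,\cdot)_{w_n}$ in its step parameter then lowers $c/n$ to $\ccc/n$ at a cost depending on $c/\ccc$. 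For the $O(1)$ knots adjacent to $\pm1$ the same idea works, but one now matches ordinary differences of step $\lesssim n^{-2}$ against $\varphi$-differences of step $\le\ccc/n$ by placing the base point $y$ inside the endpoint interval at distance $\sim n^{-2}$ from $\pm1$ (so $\varphi(y)\sim 1/n$) and, when $\ccc$ is small, first reducing by monotonicity and sub-doubling of $\omega_r$ to steps $\lesssim\ccc^2 n^{-2}$. I expect this endpoint matching to be the one genuinely delicate point; the remainder is bookkeeping.

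\textbf{Gluing.} Set $q_j:=Q_{j+1}-Q_j\in\Poly_r$, so $\norm{q_j}{\tilde I_j\cap\tilde I_{j+1}}\le c(E_j+E_{j+1})$, and since $\deg q_j\le r-1$ the Markov--Bernstein inequality on intervals yields $|q_j^{(i)}(x)|\le c\,\rho_n(x_j)^{-i}\,\theta_j(x)^{2r}(E_j+E_{j+1})$ for $0\le i\le\nu_0$, where $\theta_j(x):=1+|x-x_j|/\rho_n(x_j)$ (near $\pm1$ one uses the number of intervals $I_k$ lying between $x$ and $x_j$ in place of $|x-x_j|/\rho_n(x_j)$). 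I would fix a large integer $L$ (depending on $\nu_0$ and the doubling exponent $\gamma$ of $w$) and take polynomials $T_j$ of degree $\le c_1n$ behaving like a mollified unit step at $x_j$, namely $\sum_{m=0}^{\nu_0}\rho_n(x)^m|T_j^{(m)}(x)|\le c\,\theta_j(x)^{-L}$ for $x$ on the far side of $x_j$ and the same bound for $1-T_j$ on the near side; these are standard, obtained as integrals of suitably scaled ``needle'' polynomials. Then $P_n:=Q_1+\sum_{j=1}^{n-1}q_jT_j$ has degree $\le c_1n+r-1$, and re-running the construction with $n$ replaced by a fixed fraction of it places it in $\Poly_n$ (the finitely many small $n$ being trivial).

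\textbf{Error and derivative estimates.} On $I_k$ one has $P_n-Q_k=\sum_{j<k}q_j(T_j-1)+\sum_{j\ge k}q_jT_j$, so for $x\in I_k$ I would bound $w_n(x)|f(x)-P_n(x)|$ by $w_n(x)|f(x)-Q_k(x)|$ plus $\sum_j w_n(x)|q_j(x)|$ times the step-decay factor of $T_j$ (or $1-T_j$) at $x$: the first term is $\le c\,w_n(x_k)E_k\le c\,\w_\varphi^r(f,\ccc/n)_{w_n}$ by the localising step, and into the sum I insert the Markov bound for $|q_j(x)|$, the decay bound for $T_j$, the doubling estimate $w_n(x)/w_n(x_j)\le c\,\theta_j(x)^\gamma$, and $w_n(x_j)E_j\le c\,\w_\varphi^r(f,\ccc/n)_{w_n}$, which leaves $c\,\w_\varphi^r(f,\ccc/n)_{w_n}\sum_j\theta_j(x)^{-(L-2r-\gamma)}$; this is finite for $L$ large because only $O(1)$ knots lie at each dyadic $\rho_n$-distance from $x$, giving the first inequality. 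For the derivatives, $Q_1\in\Poly_r$ makes $Q_1^{(\nu)}\equiv0$ for $\nu\ge r$, so $P_n^{(\nu)}=\sum_j\sum_{i=0}^{r-1}\binom{\nu}{i}q_j^{(i)}T_j^{(\nu-i)}$; multiplying by $w_n\rho_n^\nu$, using $\rho_n(x)^\nu\le c\,\rho_n(x_j)^\nu\theta_j(x)^\nu$ with the Markov bounds for $q_j^{(i)}$ and the Bernstein bounds for $T_j^{(\nu-i)}$, and summing exactly as above with $L>2r+\nu_0+\gamma$, gives $\norm{w_n\rho_n^\nu P_n^{(\nu)}}{}\le c\,\w_\varphi^r(f,\ccc/n)_{w_n}$ uniformly for $r\le\nu\le\nu_0$, which completes the outline.
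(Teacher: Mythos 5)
You should first note that the paper contains no proof of \thm{jacksonthmacta} at all: it is imported verbatim from \cite{k-acta}*{Theorem 5.3} and used as a black box in the deduction of \thm{jacksonthm}, so there is no internal argument to compare yours against. What your outline does is reconstruct, for $p=\infty$, essentially the construction by which the cited result is actually proved (and which goes back to Mastroianni--Totik): Chebyshev partition with $|I_j|\sim\rho_n$, local near-best approximants from $\Poly_r$ controlled by Whitney's inequality, conversion of ordinary $r$-th differences into $\varphi$-differences with the same nodes so that $w_n(x_j)E_j\le c\,\w_\varphi^r(f,\ccc/n)_{w_n}$, and gluing by fast-decreasing polynomial partitions of unity, with the doubling growth of $w_n$ and a tail-summability lemma absorbing the $\theta_j(x)^{-L}$ factors. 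The reductions you single out are sound; in particular your endpoint matching is correct: once the ordinary step satisfies $s\lesssim\ccc^2n^{-2}$, any admissible difference has its center $y$ at distance $\geq rs/2$ from $\pm1$, hence $\varphi(y)\geq\sqrt{rs/2}$ and $h=s/\varphi(y)\leq\ccc/n$, so the difference is literally one of those entering $\w_\varphi^r(f,\ccc/n)_{w_n}$, with $w_n(y)\sim w_n(x_j)$ by the doubling property.

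Three points would need attention in a full write-up, though none is a flaw of the approach. First, for interior intervals you invoke ``sub-doubling'' of $\w_\varphi^r(f,\cdot)_{w_n}$ in the step parameter; this must be justified non-circularly (it does not come for free from realization results, which rest on the very theorem being proved). It follows by the Petrushev--Popov identity argument exactly as in \lem{twot} of this paper (using only that $w_n$ is doubling and $t\lesssim 1/n$), or you can sidestep it by performing the step reduction at the level of the ordinary modulus $\omega_r(f,\cdot,\widetilde I_j)$, as you already do at the endpoints. Second, the ``finitely many small $n$'' are not quite trivial, but they do follow from the same localization since then $\rho_n\sim1$ and $w_n\sim\const$ on $[-1,1]$ with constants depending on the doubling constant. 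Third, the existence of the polynomials $T_j$ with $\sum_{m\le\nu_0}\rho_n^m|T_j^{(m)}|\lesssim\theta_j^{-L}$, the estimate $w_n(x)\lesssim w_n(x_j)\theta_j(x)^{\gamma}$, and the bound $\sum_j\theta_j(x)^{-\mu}\le c$ for large $\mu$ are standard but nontrivial ingredients that have to be quoted or proved; your exponent bookkeeping ($L$ large relative to $2r+\nu_0+\gamma$) is the right shape. With these provisos, your blind outline matches the method behind the cited theorem rather than offering a genuinely different route.
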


\begin{proof}[Proof of \thm{jacksonthm}]
 Since $\w_\varphi^r(f, 1, B, t)_{ w }$ is a nondecreasing function of $t$, without loss of generality we can assume that with $\ccc \leq  1/(2r)$.
Suppose that $N\in\N$ is such that $N\geq \max\{ r, 100/(\ccc \D)\}$,   $n\geq N$, and
 and let $(x_i)_{i=0}^n$ be the Chebyshev partition of $[-1,1]$, \ie $x_i = \cos(i\pi/n)$, $0\leq i \leq n$ (for convenience,
we also denote $x_i :=-1$, $i\geq n+1$, and $x_i :=1$, $i\leq -1$).
As usual, we let $I_i := [x_i, x_{i-1}]$ for $1\leq i\leq n$.
Note that each (nonempty) interval $[z_j, z_{j+1}]$, $0\leq j\leq M$,   contains at least $10$ intervals $I_i$.

For each $1\leq j\leq M$, denote
\[
\nu_j := \min \left\{ i  \st  1\leq i\leq n \andd z_j \in I_i \right\} \andd  J_j:= [x_{\nu_j+1}, x_{\nu_j-2}] .
\]
Note that $\min$ in the definition of $\nu_j$ is needed if $z_j$ belongs to more than one (closed) interval $I_i$ (in which case $\nu_j$ is chosen so that $z_j$ is the left endpoint of $I_{\nu_j}$).
Let $q_j \in \Poly_r$ be a  polynomial of near best weighted approximation of $f$ on $J_j$, \ie
$\norm{w(f-q_j)}{J_j} \leq c E_r(f, J_j)_{w}$,  $1\leq j \leq M$,
and define
\[
F(x):= \begin{cases}
q_j(x) , & \mbox{\rm if }\; x  \in J_i  , \; 1\leq j \leq M ,\\
f(x), & \mbox{\rm otherwise.}
\end{cases}
\]

%
%
%

Since (see \cite{k-singular}*{p. 27}, for example)
$|I_i|/3 \leq |I_{i+1}| \leq 3 |I_i|$,  $1\leq i \leq n-1$, and
$\rho_n(x) \leq |I_i| \leq 5 \rho_n(x)$ for all $x\in I_i$ and $1\leq i \leq n$,
%
%
we conclude that
\begin{eqnarray*}
\max\{ |x_{\nu_j+1}-z_j|, |x_{\nu_j-2}-z_j|\} &\leq& \max\{ |I_{\nu_j+1}| + |I_{\nu_j}|,|I_{\nu_j}| + |I_{\nu_j-1}| \} \\
& \leq & 4 |I_{\nu_j}| \leq 20 \rho_n(z_j) \leq (20/\ccc^2) \rho(\ccc/n, z_j) ,
\end{eqnarray*}
and so
\[
J_j \subset \Z^j_{20/\ccc^2, \ccc/n}    , \quad 1\leq j \leq M.
\]
Therefore,
\begin{eqnarray} \label{Ff}
\norm{w(F-f)}{} & = &  \max_{1\leq j \leq M} \norm{w(q_j-f)}{J_i}
  \leq   c \max_{1\leq j \leq M} E_r(f, J_j)_{w} \\ \nonumber
   & \leq &   c \sum_{j=1}^M  E_r(f, \Z^j_{20/\ccc^2, \ccc/n} )_{w}
\leq  c \w_\varphi^r(f, 1, 20/\ccc^2, \ccc/n)_w .
\end{eqnarray}

We now estimate $\w_\varphi^r(F, \ccc/n)_{w_n}$ in terms of the  modulus of $f$.   Let $0<h\leq \ccc/n$ and $x$ such that $[x-rh\varphi(x)/2,x+rh\varphi(x)/2]\subset [-1,1]$ be fixed, and consider
the following three cases.

{\bf Case 1:}
$\ds x\in\Range_1 := \left\{ x \st [x-rh\varphi(x)/2,x+rh\varphi(x)/2] \subset J_j , \; \mbox{\rm for some }\;  1\leq j \leq M  \right\}$.\\
Then, for some $1\leq j \leq M$, $\Delta_{h \varphi(x)}^r(F, x, [-1,1]) = \Delta_{h \varphi(x)}^r(q_j, x, [-1,1]) = 0$, and so
\[
\norm{w_n(\cdot) \Delta_{h \varphi(\cdot)}^r(F, \cdot, [-1,1])}{\Range_1} = 0 .
\]

{\bf Case 2:}
  $\ds x\in\Range_2 := \left\{ x \st [x-rh\varphi(x)/2,x+rh\varphi(x)/2] \cap \cup_{j=1}^M J_i = \emptyset\right\}$. \\
Then, taking into account that
\begin{eqnarray} \label{newarr}
J_j &\supset&  [z_j - |I_{\nu_j+1}|, z_j + |I_{\nu_j-1}|]
\supset [z_j - |I_{\nu_j}|/3, z_j +|I_{\nu_j}|/3] \\ \nonumber
& \supset & [z_j - \rho_n(z_j)/3, z_j +\rho_n(z_j)/3] = \Z_{1/3, 1/n}^j ,
\end{eqnarray}
we conclude that
$x\in \I_{1/3, 1/n}$, and so $w_n(x) \sim w(x)$ by \lemp{v}.
Also, \ineq{newarr} implies that
\[
[-1,1] \setminus \cup_{j=1}^M J_j \subset [-1,1] \setminus \cup_{j=1}^M \Z_{1/3, 1/n}^j
\subset \I_{1/3, 1/n} \subset \I_{1/3, h} ,
\]
and so $[x-rh\varphi(x)/2,x+rh\varphi(x)/2] \subset \I_{1/3, h}$. Therefore,
$\Delta_{h \varphi(x)}^r(F, x, [-1,1]) = \Delta_{h \varphi(x)}^r(f, x, \I_{1/3, h})$, and
\[
\norm{w_n(\cdot) \Delta_{h \varphi(\cdot)}^r(F, \cdot, [-1,1])}{\Range_2} \leq c  \norm{w (\cdot)\Delta_{h \varphi(\cdot)}^r(f, \cdot, \I_{1/3, h})}{\Range_2}.
\]

{\bf Case 3:}
  $\ds x\in  \Range_3^j$, for some $1\leq j\leq M$, where  $\Range_3^j$ is the set of all $x$ such that $[x-rh\varphi(x)/2,x+rh\varphi(x)/2]$ has nonempty intersections with $J_j$ and $\left([-1,1]\setminus J_j\right)^{cl}$, \ie
  \[
  \Range_3^j := \left\{ x \st  x_{\nu_j+1}\; \mbox{\rm or }\; x_{\nu_j-2} \in [x-rh\varphi(x)/2,x+rh\varphi(x)/2] \right\} .
  \]
Note that, because of the restrictions on $N$, $[x-rh\varphi(x)/2,x+rh\varphi(x)/2]$ cannot have nonempty intersection with more than one interval $J_i$, and, in fact, $\Range_3^j$ is ``far'' from all intervals $J_i$ with $i\neq j$.

Without loss of generality, we can assume that $x_{\nu_j+1}  \in [x-rh\varphi(x)/2,x+rh\varphi(x)/2]$, since the other case follows by symmetry.
Taking into account that $x - rh\varphi(x)/2$ and $x + rh\varphi(x)/2$ are both increasing  functions in $x$, we have
\[
  \dist\left\{ z_j,  [x-rh\varphi(x)/2,x+rh\varphi(x)/2] \right\}    =
  z_j   - x - rh\varphi(x)/2
 \geq
 z_j -  \widetilde x -rh \varphi (\widetilde x)/2 ,
\]
where $\widetilde x$ is such that $\widetilde x - rh \varphi (\widetilde x)/2 = x_{\nu_j+1}$. Note that $\widetilde x <  x_{\nu_j}$ since
\[
x_{\nu_j} - rh \varphi (x_{\nu_j})/2 >  x_{\nu_j} - r\ccc \rho_n(x_{\nu_j})/2 \geq x_{\nu_j} - r\ccc |I_{\nu_j+1}|/2 > x_{\nu_j+1},
\]
and so $\widetilde x \in I_{\nu_j+1}$. Therefore,
\begin{eqnarray*}
\lefteqn{  \dist\left\{ z_j,  [x-rh\varphi(x)/2,x+rh\varphi(x)/2] \right\} }\\
&\geq&  z_j - x_{\nu_j+1} - rh \varphi (\widetilde x)
  \geq   |I_{\nu_j+1}| - r\ccc \rho_n(\widetilde x) \\
  & \geq & (1-r\ccc) |I_{\nu_j+1}| \geq (1-r\ccc)\rho_n(z_j)/3 = \rho_n(z_j)/6.
\end{eqnarray*}
Also,
\[
  \max\left\{|y-z_j| \st y\in  [x-rh\varphi(x)/2,x+rh\varphi(x)/2] \right\}
  =
  z_j   - x + rh\varphi(x)/2
\leq
 z_j -  \widehat x + rh \varphi (\widehat x)/2 ,
\]
where $\widehat x$ is such that $\widehat x + rh \varphi (\widehat x)/2 = x_{\nu_j+1}$.
Now, $\widehat x >  x_{\nu_j+2}$ since
\[
x_{\nu_j+2} + rh \varphi (x_{\nu_j+2})/2 <   x_{\nu_j+2} + r\ccc \rho_n(x_{\nu_j+2})/2 <  x_{\nu_j+2} +   r\ccc |I_{\nu_j+2}|/2 < x_{\nu_j+1} ,
\]
and so $\widehat x \in I_{\nu_j+2}$. Therefore,
\begin{eqnarray*}
\lefteqn{ \max\left\{|y-z_j| \st y\in  [x-rh\varphi(x)/2,x+rh\varphi(x)/2] \right\} }\\
 &\leq &
 z_j - x_{\nu_j+1} + rh \varphi (\widehat x) \leq  x_{\nu_j-1} - x_{\nu_j+1} + r\ccc \rho_n (\widehat x)\\
 & \leq &
 |I_{\nu_j+1}| +  |I_{\nu_j}| + r\ccc  |I_{\nu_j+2}| \leq (20+45 r\ccc) \rho_n(z_j) \leq 50 \rho_n(z_j).
\end{eqnarray*}
Hence,
\[
[x-rh\varphi(x)/2,x+rh\varphi(x)/2] \subset \I_{1/6, 1/n} \cap \Z^j_{50,1/n}   \subset \I_{1/6, h} \cap \Z^j_{50,1/n}     .
\]
\lemp{v} implies that  $w_n(x) \sim w(x)$. Also,
for any $y\in [x-rh\varphi(x)/2,x+rh\varphi(x)/2]$,
$|x-y| \leq rh\varphi(x)/2   \leq r\ccc \rho_n(x)/2 = \rho_n(x)/4$, and so \lemp{iv} yields  $w(y)\sim w(x)$.

This implies
\begin{eqnarray*}
\lefteqn{ \norm{w_n(\cdot) \Delta_{h \varphi(\cdot)}^r(F, \cdot, [-1,1])}{\Range_3^j}}\\
 & \leq  & \norm{w_n(\cdot) \Delta_{h \varphi(\cdot)}^r(f, \cdot, [-1,1]) }{\Range_3^j}
+ \norm{w_n(\cdot) \Delta_{h \varphi(\cdot)}^r(F-f, \cdot , [-1,1])}{\Range_3^j} \\
 & \leq  & \norm{w_n(\cdot) \Delta_{h \varphi(\cdot)}^r(f, \cdot, \I_{1/6, h}) }{\Range_3^j}
+ \norm{w_n(\cdot) \sum_{i=0}^r
{r \choose i}
  \left|(F-f)(\cdot-rh/2+ih\varphi(\cdot))\right|
}{\Range_3^j} \\
& \leq &
c \norm{w (\cdot) \Delta_{h \varphi(\cdot)}^r(f, \cdot, \I_{1/6, h}) }{\Range_3^j}
+ c \norm{w(q_j-f)}{J_j} \\
& \leq &
c \norm{w (\cdot) \Delta_{h \varphi(\cdot)}^r(f, \cdot, \I_{1/6, h}) }{\Range_3^j}
+ c  E_r(f, \Z^j_{20/\ccc^2, \ccc/n})_{w} .
\end{eqnarray*}

Combining the above cases we conclude that
\begin{eqnarray*}
 \w_\varphi^r(F, \ccc/n)_{w_n}
 &\leq&  c \sup_{0<h\leq \ccc/n} \norm{w (\cdot) \Delta_{h \varphi(\cdot)}^r(f, \cdot, \I_{1/6, h}) }{}
+ c \sum_{j=1}^M  E_r(f, \Z^j_{20/\ccc^2, \ccc/n})_{w}\\
& \leq &
c \w^r_\varphi(f, 1/6, 20/\ccc^2, \ccc/n)_w \leq c \w^r_\varphi(f, 1, 20/\ccc^2, \ccc/n)_w .
\end{eqnarray*}
We now recall that \thm{thm5.5} implies that
$\norm{ w  \varphi^\nu P_n^{(\nu)}}{} \leq   c n^\nu  \norm{w_n \rho_n^\nu P_n^{(\nu)}}{}$, and so applying \thm{jacksonthmacta} for the function $F$
as well as the fact that $w(x) \leq cw_n(x)$ (see \ineq{wlesswn})
we conclude that  \ineq{thing1} and \ineq{thing2} are proved with
$\w^r_\varphi(f, 1, 20/\ccc^2, \ccc/n)_w$ instead of $\w^r_\varphi(f, 1, B, \ccc/n)_w$ on the right-hand side.

Now, if $B\geq 20/\ccc^2$, then
$\w^r_\varphi(f, 1, 20/\ccc^2, \ccc/n)_w \leq \w^r_\varphi(f, 1, B, \ccc/n)_w$.
If $B<20/\ccc^2$, then, since $\ccc/n < \D/(80/\ccc^2)<1$,
\cor{cor2.14} implies that
$\w^r_\varphi(f, 1, 20/\ccc^2, \ccc/n)_w \leq c \w^r_\varphi(f, 1, 2^{-m}\cdot 20/\ccc^2, \ccc/n)_w \leq c \w^r_\varphi(f, 1, B, 1/n)_w$, where $m := \lceil \log_2(20/(B\ccc^2)\rceil \in\N$, and the constant $c$ depends only on $r$, $B$, $\ccc$ and the weight $w$.

The proof is now complete.
\end{proof}

 \sect{Inverse theorem}

\begin{theorem} \label{conversethm}
Suppose that $\Z\in\bZM$, $w\in\W^*(\Z)$,  $f\in\L_\infty^w$, $A,B>0$ and
 $n,r\in\N$.
Then
\[
 \w_\varphi^r(f, A, B, n^{-1})_{w}
  \leq  c n^{-r}   \sum_{k=1}^{n}    k^{r-1  }     E_{k}(f, [-1,1])_{w } ,
\]
where
the constant $c$ depends only on $r$,  $A$, $B$,   and the  weight $w$.
\end{theorem}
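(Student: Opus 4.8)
The plan is to establish the inverse (Bernstein-type) estimate by the standard telescoping argument applied to a sequence of near-best polynomial approximants, but carried out so that both the main part modulus $\Omega_\varphi^r$ and the local terms $E_r(f,\Z_{B,n^{-1}}^j)_w$ are controlled. First I would fix $n$ and, for each $m$ with $0\le m\le \ell:=\lfloor\log_2 n\rfloor$, choose $P_{2^m}\in\Poly_{2^m}$ realizing near-best weighted approximation, $\norm{w(f-P_{2^m})}{}\le cE_{2^m}(f,[-1,1])_w$, and set $P_1$ to be a near-best constant (or $\Poly_r$) approximant. By \cor{cornormestimate} and the monotonicity properties in \prop{propprop} it suffices to bound $\w_\varphi^r(f,A,B,n^{-1})_w$ by the claimed sum; and by \prop{propprop}(\ref{piv}) and \cor{corollary411} I may replace $A$ by $1$ at the cost of constants, and by \cor{cor2.14} (iterated) I may enlarge or shrink $B$, so it is enough to treat, say, $\w_\varphi^r(f,1,B_0,n^{-1})_w$ for one convenient $B_0$.

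Next I would split the modulus into its two pieces. For the main part piece $\Omega_\varphi^r(f,1,n^{-1})_w$, write $f=P_1+\sum_{m=1}^{\ell}(P_{2^m}-P_{2^{m-1}})+(f-P_{2^\ell})$. The difference $Q_m:=P_{2^m}-P_{2^{m-1}}\in\Poly_{2^m}$ satisfies $\norm{wQ_m}{}\le c(E_{2^{m-1}}(f)_w+E_{2^m}(f)_w)\le cE_{2^{m-1}}(f)_w$. Using \lem{lem7.2} with $t=n^{-1}\le c_*/2^m$ when $2^m\le n$ (more precisely for the dyadic levels with $2^m\le n$ this holds with $c_*=1$), I get $\Omega_\varphi^r(Q_m,1,n^{-1})_w\le cn^{-r}\norm{w\varphi^rQ_m^{(r)}}{}$, and then \thm{thm5.5} (the Markov--Bernstein inequality) gives $\norm{w\varphi^rQ_m^{(r)}}{}\le c(2^m)^r\norm{wQ_m}{}$. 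Summing, $\sum_{m=1}^{\ell}\Omega_\varphi^r(Q_m,1,n^{-1})_w\le cn^{-r}\sum_{m=1}^{\ell}2^{mr}E_{2^{m-1}}(f)_w$, and the dyadic-to-full-sum comparison $2^{mr}E_{2^{m-1}}(f)_w\le c\sum_{k=2^{m-2}}^{2^{m-1}}k^{r-1}E_k(f)_w$ (using monotonicity of $E_k$ in $k$) yields the desired $cn^{-r}\sum_{k=1}^n k^{r-1}E_k(f)_w$. The tail term $f-P_{2^\ell}$ contributes, via \lem{normestimate}, at most $c\norm{w(f-P_{2^\ell})}{}\le cE_{n}(f)_w\cdot c$ up to the constant piece, which is absorbed in the sum; and $\Omega_\varphi^r(P_1,\cdot)_w$ is handled the same way as a single $Q_m$ term (or is zero since $P_1\in\Poly_r$ and $r$th differences of $\Poly_r$ polynomials vanish).

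The remaining and more delicate point is the local terms $\sum_{j=1}^M E_r(f,\Z_{B,n^{-1}}^j)_w$. Fix $j$ and let $I:=\Z_{B,n^{-1}}^j$, an interval of length $\sim\rho_n(z_j)$ about $z_j$. For the polynomial $P_{2^\ell}$ (degree $\le n$), let $q_r\in\Poly_r$ be the $r$th Taylor polynomial of $P_{2^\ell}$ at $z_j$; then by \lem{lem8.5j}, $\norm{w(P_{2^\ell}-q_r)}{I}\le cn^{-r}\norm{w\varphi^rP_{2^\ell}^{(r)}}{}$. Hence
\[
E_r(f,I)_w\le\norm{w(f-q_r)}{I}\le\norm{w(f-P_{2^\ell})}{}+\norm{w(P_{2^\ell}-q_r)}{I}\le cE_n(f)_w+cn^{-r}\norm{w\varphi^rP_{2^\ell}^{(r)}}{}.
\]
Now $\norm{w\varphi^rP_{2^\ell}^{(r)}}{}$ is bounded by telescoping again: $P_{2^\ell}^{(r)}=P_1^{(r)}+\sum_{m=1}^\ell Q_m^{(r)}$ with $P_1^{(r)}=0$, so $\norm{w\varphi^rP_{2^\ell}^{(r)}}{}\le\sum_{m=1}^\ell\norm{w\varphi^rQ_m^{(r)}}{}\le c\sum_{m=1}^\ell 2^{mr}E_{2^{m-1}}(f)_w$ by \thm{thm5.5}, which is $\le c n^r\cdot n^{-r}\sum_{k=1}^n k^{r-1}E_k(f)_w$ after the same dyadic comparison. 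Thus $n^{-r}\norm{w\varphi^rP_{2^\ell}^{(r)}}{}\le cn^{-r}\sum_{k=1}^n k^{r-1}E_k(f)_w$, and summing over the finitely many $j$ (the number $M$ depends only on $w$ through $\Z$) absorbs the factor $M$ into the constant; the $cE_n(f)_w$ terms are likewise $\le cn^{-r}\sum k^{r-1}E_k(f)_w$ since $E_n(f)_w\le c n^{-r}\sum_{k=1}^n k^{r-1}E_k(f)_w$ by monotonicity.

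The main obstacle I anticipate is the bookkeeping for the local terms: one must be careful that $q_r$ chosen as the Taylor polynomial at $z_j$ simultaneously serves as a good $\Poly_r$-approximant of $f$ on $\Z_{B,n^{-1}}^j$, which is exactly what \lem{lem8.5j} delivers, and that the reduction from general $B$ to a fixed $B_0$ via \cor{cor2.14} is legitimate for $t=n^{-1}$ (it is, once $n$ is large; small $n$ are handled trivially by adjusting constants since the sum $\sum_{k=1}^n k^{r-1}E_k(f)_w\ge E_1(f)_w\ge cE_r(f,[-1,1])_w\ge c\w_\varphi^r(f,A,B,n^{-1})_w/M$ by \cor{cornormestimate} and \prop{propprop}(\ref{pii})). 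Everything else is the classical Bernstein telescoping scheme, with \lem{lem7.2}, \thm{thm5.5}, \lem{lem8.5j}, and \lem{normestimate} doing the weighted work.
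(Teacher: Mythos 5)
Your proposal is correct and follows essentially the same route as the paper's proof: dyadic telescoping of near-best approximants, \lem{lem7.2} plus \thm{thm5.5} for the main part modulus, \lem{lem8.5j} with Taylor polynomials at the $z_j$ for the local terms $E_r(f,\Z_{B,n^{-1}}^j)_w$, and the standard dyadic-to-full-sum comparison. The only deviations are cosmetic (the detour through \cor{corollary411} and \cor{cor2.14} is unnecessary since \lem{lem7.2} and \lem{lem8.5j} already handle general $A$ and $B$, and the tail bound should read $E_{2^\ell}(f)_w$ rather than $E_n(f)_w$, which is harmless).
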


\begin{proof}
Let $P_n^* \in \Poly_n$ denote a polynomial of (near) best approximation to $f$ with weight $w$, \ie
\[
c \norm{w(f-P_n^*)}{} \leq   \inf_{P_n\in\Poly_n} \norm{w(f-P_n)}{} = E_{n}(f, [-1,1])_{w} .
\]
We let $N\in\N$ be such that $2^N \leq n < 2^{N+1}$.
To estimate $\Omega_\varphi^r(f, A, n^{-1})_{w}$,
using \lem{normestimate} we have
\begin{eqnarray*}
\Omega_\varphi^r(f, A, n^{-1})_{w} & \leq & \Omega_\varphi^r (f, A, 2^{-N})_{w} \\
& \leq & \Omega_\varphi^r (f - P_{2^N}^*, A, 2^{-N})_{ w} + \Omega_\varphi^r (P_{2^N}^*, A, 2^{-N})_{ w} \\
& \leq & c \norm{w(f - P_{2^N}^*)}{} + \Omega_\varphi^r (P_{2^N}^*, A, 2^{-N})_{w} \\
& \leq &
 c E_{2^N}(f, [-1,1])_{w} + \Omega_\varphi^r (P_{2^N}^*, A, 2^{-N})_{w}.
\end{eqnarray*}
Now, using
\be \label{decom}
P_{2^N}^* =  P_1^* + \sum_{i=0}^{N-1} (P_{2^{i+1}}^* - P_{2^{i}}^*)
\ee
as well as \lem{lem7.2}  we have
\begin{eqnarray*}
\Omega_\varphi^r (P_{2^N}^*, A, 2^{-N})_{ w}  &\leq &
 \sum_{i=0}^{N-1} \Omega_\varphi^r  \left( P_{2^{i+1}}^* - P_{2^{i}}^*, A, 2^{-N}\right)_{w }
 \leq
 c 2^{-Nr } \sum_{i=0}^{N-1}    \norm{ w \varphi^r \left(P_{2^{i+1}}^* - P_{2^{i}}^*\right)^{(r)}}{} .
\end{eqnarray*}
Now, for each $1\leq j\leq M$, taking into account that $\Z_{B,t_1}^j \subset \Z_{B,t_2}^j$ if $t_1 \leq t_2$, we have

\begin{eqnarray*}
E_r(f, \Z_{B,1/n}^j)_w   &\leq&  E_r(f, \Z_{B,2^{-N}}^j)_w
   \leq     \norm{w(f-P_{2^N}^*)}{\Z_{B,2^{-N}}^j} +
 E_r(P_{2^N}^*, \Z_{B,2^{-N}}^j)_w \\
 & \leq & c E_{2^N}(f, [-1,1])_{w} +   \norm{w(P_{2^N}^*-q_r(P_{2^N}^*)) }{\Z_{B,2^{-N}}^j},
\end{eqnarray*}
 where $q_r(g)$ denotes the Taylor polynomial of degree $<r$ at $z_j$ for $g$.
Using \ineq{decom} again,
    noting that
\be \label{extrataylor}
q_r(P_{2^N}^*) =  P_1^* + \sum_{i=0}^{N-1} q_r(P_{2^{i+1}}^* - P_{2^{i}}^*),
\ee
and taking \lem{lem8.5j} into account we have
\begin{eqnarray*}
\norm{w(P_{2^N}^*-q_r(P_{2^N}^*) )}{\Z_{B,2^{-N}}^j } & \leq &
\sum_{i=0}^{N-1} \norm{w\left( (P_{2^{i+1}}^* - P_{2^{i}}^*) - q_r(P_{2^{i+1}}^* - P_{2^{i}}^*)\right)}{\Z_{B,2^{-N}}^j} \\
& \leq &
 c \sum_{i=0}^{N-1}  2^{-Nr} \norm{w \varphi^r (P_{2^{i+1}}^* - P_{2^{i}}^*)^{(r)}}{} .
\end{eqnarray*}
Hence,
\[
 \w_\varphi^r(f, A, B, n^{-1})_{w} \leq c E_{2^N}(f, [-1,1])_{w} + c 2^{-Nr } \sum_{i=0}^{N-1}    \norm{w \varphi^r \left(P_{2^{i+1}}^* - P_{2^{i}}^*\right)^{(r)}}{}.
\]
Now, using    \thm{thm5.5} we have
\begin{eqnarray*}
 \w_\varphi^r(f, A, B, n^{-1})_{w} &\leq & c E_{2^N}(f, [-1,1])_{w} +
  c 2^{-Nr } \sum_{i=0}^{N-1}   2^{ir }  \norm{w( P_{2^{i+1}}^* - P_{2^{i}}^* ) }{}\\
 &\leq&
  c 2^{-Nr } \sum_{i=0}^{N}   2^{ir }    E_{2^i}(f, [-1,1])_{w} \\
& \leq &  c n^{-r}  \left(E_{1}(f, [-1,1])_{w} +  \sum_{i=1}^{N}  \sum_{k=2^{i-1}+1}^{2^{i}} k^{r-1 }     E_{k}(f, [-1,1])_{w} \right) \\
 & \leq &
c n^{-r}  \sum_{k=1}^{n}    k^{r-1}     E_{k}(f, [-1,1])_{w} ,
\end{eqnarray*}
with all constants $c$ depending only on $r$,  $A$, $B$, and the weight $w$.
\end{proof}

\sect{Realization functionals}

For $w\in\W^*(\Z)$, $r \in\N$,   and $f\in\L_\infty^w$, we
 define the following  ``realization functional''  as follows
 \[
R_{r,\varphi} (f, t,   \Poly_n)_{ w} := \inf_{P_n\in\Poly_n} \left( \norm{w(f-P_n)}{} + t^r \norm{w \varphi^r P_n^{(r)}}{} \right) ,
 \]
and note that $R_{r,\varphi} (f, t_1,   \Poly_n)_{w} \sim R_{r,\varphi} (f, t_2,   \Poly_n)_{w}$ if $t_1 \sim t_2$.

\begin{theorem} \label{corr99}
Let $\Z\in\bZM$, $w\in\W^*(\Z)$,  $f\in\L_\infty^w$, $A, B>0$,
  $r  \in\N$, and let $\ccc_2\geq \ccc_1>0$.
Then, there exists a constant $N\in\N$ depending only on $r$, $\ccc_1$, and the weight $w$, such that, for $n\geq N$ and  $\ccc_1/n \leq t \leq \ccc_2/n$,
\[
R_{r,\varphi} (f, 1/n, \Poly_n)_{ w}     \sim   \w_\varphi^r(f, A,  B, t)_{w } ,
\]
where the equivalence constants depend only on $r$, $A$, $B$, $\ccc_1$, $\ccc_2$  and the weight $w$.
\end{theorem}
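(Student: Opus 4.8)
The plan is to prove the two inequalities separately. One direction, $R_{r,\varphi}(f,1/n,\Poly_n)_w \leq c\,\w_\varphi^r(f,A,B,t)_w$, is essentially the direct theorem: by \thm{jacksonthm} (applied with $\ccc := \ccc_1$, say, and $\nu_0 := r$), for $n \geq N$ there is a polynomial $P_n \in \Poly_n$ with $\norm{w(f-P_n)}{} \leq c\,\w_\varphi^r(f,1,B',\ccc_1/n)_w$ and $\norm{w\varphi^r P_n^{(r)}}{} \leq c n^r \w_\varphi^r(f,1,B',\ccc_1/n)_w$, for any prescribed $B'>0$. Choosing this $P_n$ as a competitor in the infimum defining $R_{r,\varphi}(f,1/n,\Poly_n)_w$, and using $t^r \leq (\ccc_2/n)^r$, gives $R_{r,\varphi}(f,1/n,\Poly_n)_w \leq c\,\w_\varphi^r(f,1,B',\ccc_1/n)_w$. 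It then remains to absorb the parameters: by \prop{propprop}(\ref{piii},\ref{piv},\ref{pv}) together with \cor{corollary411} (which gives equivalence of $\Omega_\varphi^r(f,A,\cdot)_w$ for different $A$) and \cor{cor2.14} (which allows halving $B$ repeatedly, valid since $\ccc_1/n$ is small once $n\geq N$), one has $\w_\varphi^r(f,1,B',\ccc_1/n)_w \sim \w_\varphi^r(f,A,B,t)_w$ with constants depending on the listed parameters.

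For the reverse inequality $\w_\varphi^r(f,A,B,t)_w \leq c\,R_{r,\varphi}(f,1/n,\Poly_n)_w$, I would fix an arbitrary $P_n \in \Poly_n$ and split $f = (f-P_n) + P_n$ in both pieces of the complete modulus. For the main part, \lem{normestimate} gives $\Omega_\varphi^r(f-P_n,A,t)_w \leq c\norm{w(f-P_n)}{}$, while \lem{lem7.2} (using $t \leq \ccc_2/n$, so $t \leq c_*/n$ with $c_* := \ccc_2$) gives $\Omega_\varphi^r(P_n,A,t)_w \leq c t^r \norm{w\varphi^r P_n^{(r)}}{}$; hence $\Omega_\varphi^r(f,A,t)_w \leq c(\norm{w(f-P_n)}{} + t^r\norm{w\varphi^r P_n^{(r)}}{})$. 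For each term $E_r(f,\Z_{B,t}^j)_w$ I would compare $f$ with the Taylor polynomial $q_r$ of degree $<r$ of $P_n$ at $z_j$: then $E_r(f,\Z_{B,t}^j)_w \leq \norm{w(f-q_r)}{\Z_{B,t}^j} \leq \norm{w(f-P_n)}{} + \norm{w(P_n-q_r)}{\Z_{B,t}^j}$, and since $\Z_{B,t}^j \subset \Z_{cB,1/n}^j$ for $t \leq \ccc_2/n$ (because $\rho(t,z_j) \leq c\rho_n(z_j)$), \lem{lem8.5j} bounds the last term by $c n^{-r}\norm{w\varphi^r P_n^{(r)}}{} \leq c t^r \norm{w\varphi^r P_n^{(r)}}{}$ (using $t \geq \ccc_1/n$). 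Summing over $j$ and taking the infimum over $P_n \in \Poly_n$ yields $\w_\varphi^r(f,A,B,t)_w \leq c\,R_{r,\varphi}(f,1/n,\Poly_n)_w$.

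The main obstacle is bookkeeping rather than a deep new idea: ensuring the chain of parameter reductions in the first direction is legitimate, i.e. that the smallness conditions required by \cor{corol54}/\cor{cor2.14} (of the form $\ccc_1/n < c_0$ with $c_0$ depending on $\D$ and the relevant $B$) hold, which forces the threshold $N$ to depend on the weight through $\D$, as stated. A secondary point requiring care is the direction of the inequalities $\Z_{B,t}^j \subset \Z_{cB,1/n}^j$ versus $t \geq \ccc_1/n$ for the lower bound on $t^r$ versus $n^{-r}$: both $t \sim 1/n$ comparisons are used, one for each inclusion, so the hypotheses $\ccc_1/n \leq t \leq \ccc_2/n$ are exactly what is needed and the resulting constants depend on $\ccc_1,\ccc_2$ as claimed. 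Finally, since $R_{r,\varphi}(f,t_1,\Poly_n)_w \sim R_{r,\varphi}(f,t_2,\Poly_n)_w$ when $t_1 \sim t_2$ (noted just before the theorem), the value $1/n$ in the realization functional can be freely replaced by $t$ throughout, which is what makes the two bounds fit together.
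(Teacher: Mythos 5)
Your proposal is correct and follows essentially the same route as the paper: Theorem~\ref{jacksonthm} supplies the competitor polynomial for the bound $R_{r,\varphi}\leq c\,\w_\varphi^r$, and the reverse bound splits $f=(f-P_n)+P_n$ for arbitrary $P_n$, handling the main part via Lemmas~\ref{normestimate} and \ref{lem7.2} and the terms $E_r(f,\Z_{B,t}^j)_w$ via the Taylor polynomial of $P_n$ at $z_j$ together with Lemma~\ref{lem8.5j}. The only cosmetic difference is that the paper invokes Theorem~\ref{jacksonthm} directly with the given $B$ (so the $B'\to B$ reduction via Corollary~\ref{cor2.14} is unnecessary) and reduces to $A=1$ once at the outset via Corollary~\ref{corollary411}.
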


\begin{proof} In view of \cor{corollary411} it is sufficient to prove this lemma for $A=1$.
 \thm{jacksonthm}  implies that, for every $n\geq N$ (with $N$ depending only on $r$, $\ccc_1$  and the weight $w$),
   there exists a polynomial $P_n \in\Poly_n$ such that
\be \label{kf1}
R_{r,\varphi} (f, 1/n, \Poly_n)_{w}   \leq c  \w_\varphi^r(f, 1,  B, \ccc_1/n)_{ w } \leq c \w_\varphi^r(f, 1,  B, t)_{w } .
\ee
%

Now,  let $P_n$ be an arbitrary polynomial from $\Poly_n$, $n\in\N$.
Lemmas~\ref{normestimate} and  \ref{lem7.2}  imply that
 \begin{eqnarray} \label{kf2}
 \Omega_\varphi^r(f, 1,  t)_{w}    &\leq&  c \Omega_\varphi^r(f-P_n, 1,  t)_{ w}  + c \Omega_\varphi^r(P_n, 1,  t)_{ w} \\ \nonumber
 & \leq &
 c \norm{w(f-P_n)}{} + c n^{-r} \norm{ w \varphi^r P_n^{(r)}}{} ,
\end{eqnarray}
where constants $c$ depend  only on  $r$,   $\ccc_2$  and the weight $w$.
Also,   taking into account that $\Z_{B,t}^j \subset \Z_{B, \ccc_2/n}^j \subset \Z_{B \ccc_2\max\{\ccc_2,1\}, 1/n}^j$ and using \lem{lem8.5j}, we have
\begin{eqnarray} \label{kf4}
\sum_{j=1}^M E_r (f, \Z_{B,t}^j)_w
 & \leq & c \norm{w(f-P_n)}{}   + \sum_{j=1}^M \inf_{q\in\Poly_r} \norm{w(P_n - q)}{\Z_{B \ccc_2\max\{\ccc_2,1\}, 1/n}^j}\\ \nonumber
 & \leq &
 c \norm{w(f-P_n)}{} +    c n^{-r}  \norm{w \varphi^r  P_n^{(r)} }{}.
\end{eqnarray}

Therefore, for any $n\in\N$, $\ccc_2>0$ and $0<t\leq \ccc_2/n$,
\be \label{kf99}
\w_\varphi^r(f, 1,  B, t)_{ w } \leq c R_{r,\varphi} (f, 1/n, \Poly_n)_{w} ,
\ee
which completes the proof of the theorem.
\end{proof}

 \thm{corr99} implies, in particular,  that
 $\w_\varphi^r(f, A_1 , B_1, t_1)_{w } \sim \w_\varphi^r(f, A_2, B_2, t_2)_{w }$ if   $t_1 \sim t_2$ with equivalence constants independent of $f$.

 Finally, we remark that the moduli $\w_\varphi^r(f, A , B, t)_{w }$ are not equivalent to the following weighted $K$-functional
 \[
 K_{r,\varphi} (f, t)_{ w} := \inf_{g^{(r-1)}\in\AC_\loc} \left( \norm{w(f-g)}{} + t^r \norm{w \varphi^r g^{(r)}}{} \right) .
 \]
 This follows from counterexamples constructed in \cite{mt1999}, where additional discussions and  negative results can be found.

\sect{Appendix}

The following lemma shows that $E_r(f, \Z_{B,t}^j)_w$ in the definition of the complete modulus \ineq{compmod} can be replaced with
$\norm{w(f-q_j)}{\Z_{B,t}^j}$, where $q_j$ is a polynomial of (near) best weighted approximation to $f$ on any subinterval of $\Z_{B,t}^j$ of length $\geq c \rho(t, z_j)$.

 \begin{lemma} \label{lem3.1}
 Suppose that  $\Z\in\bZM$, $w\in\W^*(\Z)$,  $f\in\L_\infty^w$,
 and suppose that  intervals $I$ and $J$ are such that $I\subset J \subset [-1,1]$ and $|J|\leq c_0|I|$.
 Then, for any $r\in\N$,
if   $q \in\Poly_r$ is a polynomial of near best approximation to $f$ on $I$  with weight $w$, \ie
  \[
\norm{w(f-q)}{I} \leq c_1 E_r(f,I)_{w} ,
\]
then $q$ is also a polynomial of near best approximation to $f$ on $J$. In other words,
  \[
\norm{w(f-q)}{J} \leq c E_r(f,J)_{w} ,
\]
where the constant $c$ depends only on  $r$, $c_0$, $c_1$  and the weight $w$.
 \end{lemma}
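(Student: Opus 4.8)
The plan is to compare the best approximation errors on $I$ and on $J$ by inserting the near-best polynomial $q$ from $I$ and estimating how much $w(f-q)$ can grow when we pass from $I$ to the larger interval $J$. Let $Q\in\Poly_r$ be a polynomial of best (or near-best) approximation to $f$ on $J$ with weight $w$, so that $\norm{w(f-Q)}{J}\le c\,E_r(f,J)_w$. Then on $I$ we have $\norm{w(f-Q)}{I}\le\norm{w(f-Q)}{J}$, and since $q$ is near-best on $I$,
\[
\norm{w(q-Q)}{I}\le\norm{w(f-q)}{I}+\norm{w(f-Q)}{I}\le c_1E_r(f,I)_w+\norm{w(f-Q)}{J}\le c\,E_r(f,J)_w,
\]
using $E_r(f,I)_w\le\norm{w(f-Q)}{I}\le\norm{w(f-Q)}{J}$. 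The point is now to bootstrap a bound for $\norm{w(q-Q)}{J}$ from the bound on $I$, i.e.\ to show that for the polynomial $p:=q-Q\in\Poly_r$ one has $\norm{wp}{J}\le c\,\norm{wp}{I}$; granting this, $\norm{w(f-q)}{J}\le\norm{w(f-Q)}{J}+\norm{w(q-Q)}{J}\le c\,E_r(f,J)_w$, which is the claim.

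For the key inequality $\norm{wp}{J}\le c\,\norm{wp}{I}$, $p\in\Poly_r$, I would distinguish according to where $J$ sits relative to the nearest point of $\Z$. If $J$ stays away from $\Z$ in the sense that $w$ is comparable throughout $J$ (which happens when $|J|$ is small compared to $\dist(J,\Z)$; recall $\Z_{B,t}^j$ with $|J|\le c_0|I|\le cc_0\rho(t,z_j)$ hugs $z_j$, so this is the typical configuration only after splitting off the point $z_j$), then $\norm{wp}{J}\sim w_J\norm{p}{J}$ and $\norm{wp}{I}\sim w_I\norm{p}{I}$ with $w_I\sim w_J$, and it remains to bound $\norm{p}{J}\le c\,\norm{p}{I}$ for polynomials of the fixed degree $<r$ on comparable intervals — a standard fact (e.g.\ via an affine change of variable to $[-1,1]$ and equivalence of norms on the finite-dimensional space $\Poly_r$, the constant depending only on $r$ and $c_0=|J|/|I|$). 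When $J$ does contain or approach a zero $z_j$ of $w$, one instead uses that $w\in\W^*(\Z)\subset\AW$, hence $w$ is an $A_\infty$ weight: by the $A_\infty$ property with $\alpha=|I|/|J|\ge 1/c_0$ there is $\beta=\beta(c_0,w)>0$ with $w(I)\ge\beta\,w(J)$, and combining this with the $\astar$ lower bound $w(J_1)\ge(|J_1|/|J_2|)^l w(J_2)$ for $J_1\subset J_2$ (valid with $l$ depending only on the $\astar$ constant), one controls $\esssup_J wp$ by a weighted integral over $I$; alternatively, since $\dim\Poly_r=r$, one can argue directly that $\|p\|_{\L_\infty^w(J)}\le c\,\|p\|_{\L_\infty^w(I)}$ by compactness on the unit sphere of $\Poly_r$ once one knows $\|p\|_{\L_\infty^w(I)}=0\Rightarrow p\equiv 0$ on $J$, which holds because $w>0$ a.e.\ on any interval (an $\astar$ weight cannot vanish on a set of positive measure).

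I expect the main obstacle to be the second case: making the estimate $\norm{wp}{J}\le c\,\norm{wp}{I}$ uniform when $w$ degenerates inside $J$, since then the weighted sup-norm on $\Poly_r$ is no longer a fixed norm but varies with the interval and the weight. The cleanest route is the finite-dimensionality/compactness argument, tracking that the resulting constant depends only on $r$, on the ratio $c_0$, on $c_1$, and on the $\astar$ (equivalently $A_\infty$) constant of $w$; the $\W^*(\Z)$ structure itself is not needed beyond $w\in\AW$ for this step, but it is what guarantees $I$ and $J$ are the relevant short intervals $\Z_{B,t}^j$ where the whole estimate is applied. Once the polynomial inequality is in hand, the rest is the two-line triangle-inequality chain above.
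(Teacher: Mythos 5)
Your overall reduction coincides with the paper's: introduce a near-best polynomial $\widetilde q$ on $J$, apply the triangle inequality, and reduce everything to the comparison $\norm{wp}{J}\le c\,\norm{wp}{I}$ for $p\in\Poly_r$. The gap is that you never actually establish this comparison in the only nontrivial case, namely when $w$ degenerates inside $J$. The compactness argument on the unit sphere of $\Poly_r$ yields a finite constant for each \emph{fixed} triple $(w,I,J)$, but the lemma needs a constant uniform over all pairs $I\subset J$ with $|J|\le c_0|I|$; after rescaling to a reference interval the weighted sup-norms still form an infinite family depending on $|I|$ and on the position of $I$ relative to the zeros of $w$, so finite-dimensionality alone does not produce a uniform bound. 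The $A_\infty$ sketch has the same hole: the $A^*$ condition, doubling, and the unweighted norm comparison for $\Poly_r$ give the easy half,
\[
\norm{wp}{J}\;\le\;\frac{L^*}{|J|}\,w(J)\,\norm{p}{J}\;\le\;c\,\frac{w(I)}{|I|}\,\norm{p}{I},
\]
but you must still pass from the average $\bigl(w(I)/|I|\bigr)\norm{p}{I}$ back down to the essential supremum $\norm{wp}{I}$, and you do not say how; ``a weighted integral over $I$'' is not $\norm{wp}{I}$.

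The paper closes precisely this step using the $\W^*(\Z)$ structure, which you explicitly (and incorrectly) declare unnecessary here. Choosing $n$ with $\rho_n(a)\sim|I|$ ($a$ the midpoint of $I$), it locates a Chebyshev subinterval $I_\mu\subset I$ with $|I_\mu|\sim|I|$ and $I_\mu\subset\I_{c,1/n}$, so that \lemp{iv} gives $w(x)\sim w(y)$ on $I_\mu$; then $\bigl(w(I_\mu)/|I_\mu|\bigr)\norm{p}{I_\mu}\le c\,\norm{wp}{I_\mu}\le c\,\norm{wp}{I}$ is immediate, and the unweighted comparison of $\norm{p}{J}$ with $\norm{p}{I_\mu}$ plus doubling finish the argument (a separate easy case handles $|I|>\D/2$). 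If you insist on arguing only from $A_\infty$, you would need an extra Remez-type ingredient: $|p|\ge c\,\norm{p}{I}$ on a set $E\subset I$ with $|E|\ge|I|/2$, whence $\esssup_{I}w|p|\ge c\,\norm{p}{I}\,w(E)/|E|\ge c\beta\,\bigl(w(I)/|I|\bigr)\norm{p}{I}$ by the $A_\infty$ condition. Either route would repair the proof, but as written the key inequality is asserted rather than proved.
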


\begin{proof} The proof is similar to that of \cite{k-singular}*{Lemma A.1}.
First, we assume that $|I| \leq \D/2$, and so $I$ may contain at most one $z_j$ from $\Z$.
Now, we denote by $a$  the midpoint of $I$ and
 let $n\in\N$ be such that
$\rho_{n+1}(a) < |I|/1000 \leq \rho_n(a)$. Then, $\rho_n(a) \sim |I|$ and, as was shown in the proof of \cite{k-singular}*{Lemma A.1},
 $I$ contains at least $5$ adjacent  intervals $I_{\nu+i}$, $i=2,1,0,-1,-2$.
 Moreover,  one of those intervals, $I_\mu$, is such that $|I_\mu|\sim |I|$ and $I_\mu \subset \I_{c, 1/n}$ with some absolute constant $c$, and \lemp{iv} implies that $w(x) \sim w (y)$, for $x,y\in I_\mu$, with equivalence constants depending only on $w$.

Suppose now  that $\widetilde q$ is a polynomial of near best weighted approximation of $f$ on $J$, \ie
$\norm{w(f-\widetilde q)}{J} \leq c E_r(f, J)_{w}$.
 Then, taking into account that $|I_\mu| \sim |I| \sim |J|$ and using the fact that   $w$ is doubling,
 we have
\begin{eqnarray*}
\norm{w(\widetilde q -q)}{J}   & \leq &   L^* |J|^{-1}  \norm{\widetilde q - q}{J} w(J)
  \leq
c |I_\mu|^{-1} \norm{\widetilde q - q}{I_\mu} w(I_\mu) \\
 & \leq &
c w(x_\mu) \norm{\widetilde q - q}{I_\mu}
\leq c \norm{w(\widetilde q -q)}{I_\mu} .
\end{eqnarray*}

Therefore,
\begin{eqnarray*}
\norm{w(f-q)}{J} &\leq& c \norm{w(f-\widetilde q)}{J}+ c \norm{w(\widetilde q -q)}{J} \\
& \leq &
c \norm{w(f-\widetilde q)}{J} + c \norm{w(\widetilde q -q)}{I} \\
& \leq &
c \norm{w(f-\widetilde q)}{J} + c \norm{w(\widetilde q -f)}{I}    +  c \norm{ w( f -q)}{I}  \\
& \leq &
c \norm{w(f-\widetilde q)}{J}     +  c \norm{w( f -q)}{I}  \\
& \leq &
c E_r(f, J)_{w} + c E_r(f, I)_{ w} \\
& \leq &
c E_r(f, J)_{ w} ,
\end{eqnarray*}
 and the proof is complete if $|I| \leq \D/2$.

If $|I| > \D/2$, then $|I|\sim |J|\sim 1$, and we take $n\in\N$ to be such that $I$ contains at least $4M+4$ intervals $I_i$. Then $I$ contains   $4$ adjacent intervals $I_i$ not containing any points from $\Z$, and we can use the same argument as above.
\end{proof}

\begin{bibsection}
\begin{biblist}

\bib{dt}{book}{
  author={Ditzian, Z.},
  author={Totik, V.},
  title={Moduli of smoothness},
  series={Springer Series in Computational Mathematics},
  volume={9},
  publisher={Springer-Verlag},
  place={New York},
  date={1987},
  pages={x+227},
  isbn={0-387-96536-X},
}

\bib{fm}{article}{
   author={Fefferman, C.},
   author={Muckenhoupt, B.},
   title={Two nonequivalent conditions for weight functions},
   journal={Proc. Amer. Math. Soc.},
   volume={45},
   date={1974},
   pages={99--104},
}

\bib{k-acta}{article}{
   author={Kopotun, K. A.},
   title={Polynomial approximation with doubling weights},
   journal={Acta Math. Hungar.},
   volume={146},
   number={1},
   date={2015},
   pages={496--535},
}

 \bib{k-singular}{article}{
   author={Kopotun, K. A.},
   title={Polynomial approximation with doubling weights having finitely many zeros and singularities},
   journal={J. Approx. Theory},
   volume={198},
   date={2015},
   pages={24--62},
}

\bib{mt2001}{article}{
   author={Mastroianni, G.},
   author={Totik, V.},
   title={Best approximation and moduli of smoothness for doubling weights},
   journal={J. Approx. Theory},
   volume={110},
   date={2001},
   number={2},
   pages={180--199},
}

\bib{mt2000}{article}{
   author={Mastroianni, G.},
   author={Totik, V.},
   title={Weighted polynomial inequalities with doubling and $A_\infty$
   weights},
   journal={Constr. Approx.},
   volume={16},
   date={2000},
   number={1},
   pages={37--71},
}

\bib{mt1999}{article}{
   author={Mastroianni, G.},
   author={Totik, V.},
   title={Jackson type inequalities for doubling weights. II},
   journal={East J. Approx.},
   volume={5},
   date={1999},
   number={1},
   pages={101--116},
}

\bib{mt1998}{article}{
   author={Mastroianni, G.},
   author={Totik, V.},
   title={Jackson type inequalities for doubling and $A_p$ weights},
   booktitle={Proceedings of the Third International Conference on
   Functional Analysis and Approximation Theory, Vol. I (Acquafredda di
   Maratea, 1996)},
   journal={Rend. Circ. Mat. Palermo (2) Suppl.},
   number={52, Vol. I},
   date={1998},
   pages={83--99},
}

\bib{pp}{book}{
   author={Petrushev, P. P.},
   author={Popov, V. A.},
   title={Rational approximation of real functions},
   series={Encyclopedia of Mathematics and its Applications},
   volume={28},
   publisher={Cambridge University Press, Cambridge},
   date={1987},
   pages={xii+371},
}

\bib{stein}{book}{
   author={Stein, E. M.},
   title={Harmonic analysis: real-variable methods, orthogonality, and
   oscillatory integrals},
   series={Princeton Mathematical Series},
   volume={43},
   note={With the assistance of Timothy S. Murphy;
   Monographs in Harmonic Analysis, III},
   publisher={Princeton University Press, Princeton, NJ},
   date={1993},
   pages={xiv+695},
}

 \end{biblist}
\end{bibsection}

\end{document}